\numberwithin{equation}{section}
\newtheorem{thm}{Theorem}
\newtheorem{cor}{Corollary}
\newtheorem{lemma}{Lemma}
\newtheorem{assum}{Assumption}
\newtheorem{remark}{Remark}
\DeclareMathOperator*{\argmin}{argmin}
\DeclareMathOperator*{\sgn}{sgn}
\DeclareMathOperator*{\var}{var}
\newcommand{\bm}[1]{\mbox{\boldmath{$#1$}}}
\renewcommand{\baselinestretch}{1.5}
\title{Quantile double autoregression}
\author{Qianqian Zhu and Guodong Li\\
\textit{Shanghai University of Finance and Economics and}\\
\textit{University of Hong Kong}}
\begin{document}
\maketitle
\begin{abstract}
Many financial time series have varying structures at different quantile levels, and also exhibit the phenomenon of conditional heteroscedasticity at the same time.
In the meanwhile, it is still lack of a time series model to accommodate both of the above features simultaneously.
This paper fills the gap by proposing a novel conditional heteroscedastic model, which is called the quantile double autoregression.
The strict stationarity of the new model is derived, and a self-weighted conditional quantile estimation is suggested.
Two promising properties of the original double autoregressive model are shown to be preserved.
Based on the quantile autocorrelation function and self-weighting concept, three portmanteau tests are constructed to check the adequacy of fitted conditional quantiles.
The finite-sample performance of the proposed inference tools is examined by simulation studies, and the necessity of the new model is further demonstrated by analyzing the S\&P500 Index.
\end{abstract}
{\it Key words:} Autoregressive time series model; Conditional heteroscedasticity; Portmanteau test; Quantile model; Strict stationarity.

\newpage
\section{Introduction}

The conditional heteroscedastic models have become a standard family of nonlinear time series models since the introduction of \citeauthor{Engle1982}'s \citeyearpar{Engle1982} autoregressive conditional heteroscedastic (ARCH) and \citeauthor{Bollerslev1986}'s \citeyearpar{Bollerslev1986} generalized autoregressive conditional heteroscedastic (GARCH) models.
Among existing conditional heteroscedastic models, the double autoregressive (AR) model recently has attracted more and more attentions; see \cite{Ling2004,Ling2007}, \cite{Ling_Li2008}, \cite{Zhu_Ling2013}, \cite{Li_Ling_Zhang2016}, \cite{Li_Zhu_Liu_Li2017}, \cite{Zhu_Zheng_Li2018} and references therein.
This model has the form of
\begin{equation}\label{dar0}
y_t=\sum_{i=1}^p\phi_{i}y_{t-i} +\varepsilon_t\sqrt{\omega+\sum_{j=1}^p\beta_{j}y_{t-j}^{2}},
\end{equation}
where $\omega>0$, $\beta_j\geq 0$ with $1\leq j\leq p$, and $\{\varepsilon_t\}$ are identically and independently distributed ($i.i.d.$) random variables with mean zero and variance one.
It is a special case of AR-ARCH models in \cite{Weiss1986}, and will reduce to \citeauthor{Engle1982}'s \citeyearpar{Engle1982} ARCH model when all $\phi_{i}$'s are zero.
The double AR model has two novel properties.
First, it has a larger parameter space than that of the commonly used AR model. For example, when $p=1$, the double AR model may still be stationary even as $|\phi_1|\geq 1$ \citep{Ling2004}, whereas this is impossible for AR-ARCH models.
Secondly, no moment condition on $y_t$ is needed to derive the asymptotic normality of the Gaussian quasi-maximum likelihood estimator (QMLE) \citep{Ling2007}. This is in contrast to the ARMA-GARCH model, for which the finite fourth moment of the process is unavoidable in deriving the asymptotic distribution of the Gaussian QMLE \citep{Francq_Zakoian2004}, resulting in a much narrower parameter space \citep{Li_Li2009}.

In the meanwhile, conditional heteroscedastic models are considered mainly for modeling volatility and financial risk. Some quantile-based measures, such as the value-at-risk (VaR), expected shortfall and limited expected loss, are intimately related to quantile estimation, see, e.g., \cite{Artzner_Delbaen_Eber_Heath1999}, \cite{Wu_Xiao2002}, \cite{Bassett_Koenker_Kordas2004} and \cite{Francq_Zakoian2015}.
Therefore, it is natural to consider the quantile estimation for conditional heteroscedastic models.
Many researchers have investigated the conditional quantile estimation (CQE) for the (G)ARCH models; see \cite{Koenker_Zhao1996} for linear ARCH models, \cite{Xiao_Koenker2009} for linear GARCH models, \cite{Lee_Noh2013} and \cite{Zheng_Zhu_Li_Xiao2016} for quadratic GARCH models.
\cite{Chan_Peng2005} considered a weighted least absolute deviation estimation for double AR models with the order of $p=1$, and \cite{Zhu_Ling2013} studied the quasi-maximum exponential likelihood estimation for a general double AR model.
\cite{Cai_Montes_Olmo2013} considered a Bayesian estimation of model \eqref{dar0} with $\varepsilon_t$ following a generalized lambda distribution.
However, it is still open to perform the CQE for double AR models.

In addition, for the double AR process generated by model \eqref{dar0}, its $\tau$th conditional quantile has the form of
\begin{equation}\label{darquantile}
Q_{\tau}(y_t|\mathcal{F}_{t-1})=\sum_{i=1}^p\phi_{i}y_{t-i} +b_{\tau}\sqrt{\omega+\sum_{j=1}^p\beta_{j}y_{t-j}^{2}}, \hspace{5mm}0<\tau<1,
\end{equation}
and the coefficients of $\phi_i$'s are all $\tau$-independent, where $\mathcal{F}_t$ is the $\sigma$-field generated by $\{y_s,s\leq t\}$, and $b_{\tau}$ is the $\tau$th quantile of $\varepsilon_t$; see also \cite{Cai_Montes_Olmo2013}.
However, when modeling the closing prices of S\&P500 Index by the CQE, we found that the estimated coefficients of $\phi_i$'s depend on the quantile level significantly, while those of $\beta_j$'s also slightly depend on the quantile level; see Figures \ref{IntroFunction} and \ref{FunctionPlots} in Section 6 for empirical evidences. Actually this phenomenon can also be found in many other stock indices.
\cite{Koenker_Xiao2006} proposed a quantile AR model by extending the common autoregression, and the corresponding coefficients are defined as functions of quantile levels.
By adapting the method in \cite{Koenker_Xiao2006}, this paper attempts to introduce a new conditional heteroscedastic model, called the quantile double autoregression, to better interpret the financial time series with the above phenomenon. We state our first main contribution below in details.
\begin{itemize}
\item[(a)] A direct extension of Koenker and Xiao's method will result in a strong constraint that the coefficients of $y_{t-j}^2$'s will be zero at a certain quantile level simultaneously; see Section \ref{Section_process}. A novel transformation of $S_{\textrm{Q}}(x)=\sqrt{|x|}\sgn(x)$ is first introduced to the conditional scale structure in Section \ref{Section_process}, and hence the drawback can be removed from the derived model. 
Moreover, there is no nonnegative constraint on coefficient functions, and this makes numerical optimization much flexible.
Section \ref{Section_process} also establishes the strict stationarity and ergodicity of the newly proposed model, and the first novel property of double AR models is preserved.
\end{itemize}

In the meanwhile, many financial time series are heavy-tailed, and the least absolute deviation (LAD) estimator of infinite variance AR models usually has a faster convergence rate, but a more complicated asymptotic distribution, than the commonly encountered asymptotic normality; see, e.g., \cite{Gross_Steiger1979}, \cite{AN_Chen1982} and \cite{Davis_Knight_Liu1992}. 
\cite{Ling2005} proposed a self-weighted LAD estimation, and the weights are used to reduce the moment condition on processes. As a result, the asymptotic normality can be obtained although the super-consistency is sacrificed.
The self-weighting approach has also been adopted to the quasi-maximum likelihood estimation for ARMA-GARCH models \citep{Zhu_Ling2011} and augmented double AR models \citep{Jiang2020}, and to the conditional quantile estimation for linear double AR models \citep{Zhu_Zheng_Li2018}.
The CQE may suffer from the similar problem. Actually the finite third moment on $\{y_t\}$ will be inevitable for the asymptotic normality of the CQE \citep{Zhu_Ling2011}, and this will make the resulting parameter space narrower; see Section \ref{Section_process}. 
\begin{itemize}
	\item[(b)] Motivated by \cite{Ling2005}, this paper considers a self-weighted CQE for the newly proposed model in Section \ref{Section_estimation}. Only a fractional moment on $\{y_t\}$ is needed for the asymptotic normality, and the second novel property of double AR models is hence preserved. As a result, the proposed methodology can be applied to very heavy-tailed data.
	Moreover, the objective function in this paper is non-differentiable and non-convex, and this causes great challenges in asymptotic derivations. Section \ref{Section_estimation} overcomes the difficulty by adopting the bracketing method in \cite{Pollard1985}, and this is another important contribution of this paper. 
\end{itemize}

According to the Box-Jenkins' three-stage modeling strategy, it is important to check the adequacy of fitted conditional quantiles, and the autocorrelation function of residuals is supposed to play an important role in this literature; see \cite{Box_Jenkins1970} and \cite{Li2004}.
\cite{Li_Li_Tsai2015} introduced a quantile correlation to measure the linear relationship between any two random variables for the given quantile in quantile regression settings, and proposed the quantile autocorrelation function (QACF) to assess the adequacy of fitted quantile AR models \citep{Koenker_Xiao2006}.
In the meanwhile, the diagnostic checking for the conditional location usually uses fitted residuals \citep{Ljung_Box1978}, while that for the conditional scale employs the absolute residuals \citep{Li_Li2005}. 
It is noteworthy that the proposed quantile double AR model consists of both the conditional location and scale parts.
\begin{itemize}
	\item[(c)] In line with the estimating procedure in Section \ref{Section_estimation}, Section \ref{Section_checking} first introduces a self-weighted QACF of residuals to measure the quantile correlation, and constructs two portmanteau tests to check the adequacy of the fitted conditional quantile in the conditional location and scale components, respectively. A combined test is also considered. 
	This is the third main contribution of this paper.
\end{itemize}

In addition, Section \ref{Section_simulation} conducts simulation studies to evaluate the finite-sample performance of self-weighted estimators and portmanteau tests, and Section \ref{Section_realdata} presents an empirical example to illustrate the usefulness of the new model and its inference tools. Conclusion and discussion are made in Section \ref{Section_conclusion}. All the technical details are relegated to the Appendix.
Throughout the paper, $\rightarrow_d$ denotes the convergence in distribution, and $o_p(1)$ denotes a sequence of random variables converging to zero in probability. We denote by $\|\cdot\|$ the norm of a matrix or column vector, defined as $\|A\|=\sqrt{\text{tr}(AA^\prime)}=\sqrt{\sum_{i,j}a_{ij}^2}$.

\section{Quantile double autoregression}\label{Section_process}

This section introduces a new conditional heteroscedastic model by generalizing the double AR model at \eqref{darquantile} or \eqref{dar0}, and the new model can also be used to accommodate the phenomenon that the financial time series may have varying structures at different quantile levels. 

We first consider a direct extension of model \eqref{darquantile}, which, as in \cite{Zhu_Zheng_Li2018}, can be reparametrized into 
$Q_{\tau}(y_t|\mathcal{F}_{t-1})=\sum_{i=1}^p\phi_{i}y_{t-i} +b_{\tau}^*\sqrt{1+\sum_{j=1}^p\beta_{j}^*y_{t-j}^{2}}$, where $b_{\tau}^*=b_{\tau}\sqrt{\omega}$ and $\beta_{j}^*=\beta_{j}/\omega$ with $1\leq j\leq p$.
Along the line of quantile autoregression \citep{Koenker_Xiao2006}, by letting the corresponding coefficients depend on the quantile level $\tau$, we then have
\begin{equation}\label{addeq1}
Q_{\tau}(y_t|\mathcal{F}_{t-1}) =\sum_{i=1}^p\phi_{i}(\tau)y_{t-i} +b^*(\tau)\sqrt{1+\sum_{j=1}^p\beta_j^*(\tau)y_{t-j}^{2}},\hspace{5mm} 0<\tau<1.
\end{equation}
However, if there exists $\tau$ such that $b^*(\tau)=0$, all $y_{t-j}^2$'s will then disappear from the quantile structure at this level, i.e. the contributions of all $y_{t-j}^2$'s are zero at a certain quantile level simultaneously. 
In addition, both coefficient functions $b^{*}(\tau)$ and $\beta_j^*(\tau)$ are related to the same term of $y_{t-j}^2$, and it may not be a good idea to separate the influence of $y_{t-j}^2$ on $Q_{\tau}(y_t|\mathcal{F}_{t-1})$ into two $\tau$-dependent functions. As a result, the extension at \eqref{addeq1} may not be a good choice.

This paper attempts to tackle the drawback of model \eqref{addeq1} by looking for a way to move $b_{\tau}\in \mathbb{R}$ at \eqref{darquantile} inside the square root. It then leads to a transformation of $S_{\textrm{Q}}(x)=\sqrt{|x|}\sgn(x)$, which is an extension of the square root function with the support from $\mathbb{R}_+=[0,+\infty)$ to $\mathbb{R}$, where $\sgn(\cdot)$ is the sign function.
Specifically, model \eqref{darquantile} can be reparameterized into
$Q_{\tau}(y_t|\mathcal{F}_{t-1})=\sum_{i=1}^p\phi_{i}y_{t-i} +S_{\textrm{Q}} (b_{\tau}^{\star}+\sum_{j=1}^p\beta_{j}^{\star}y_{t-j}^{2})$ alternatively, where $b_{\tau}^{\star}=b_{\tau}^2\omega$ and $\beta_{j}^{\star}=b_{\tau}^2\beta_{j}$ with $1\leq j\leq p$. By letting $\phi_i$'s, $b_{\tau}^{\star}$ and $\beta_j^{\star}$'s depend on the quantile level $\tau$, we then propose the new conditional heteroscedastic model below,
\begin{equation}\label{qdar}
Q_{\tau}(y_t|\mathcal{F}_{t-1}) =\sum_{i=1}^p\phi_{i}(\tau)y_{t-i} +S_{\textrm{Q}}\left(b(\tau)+\sum_{j=1}^p\beta_{j}(\tau)y_{t-j}^{2}\right), \quad \text{for all }0<\tau<1,
\end{equation}
where $b(\cdot)$, $\phi_{i}(\cdot)$'s and $\beta_{j}(\cdot)$'s with $1\leq i,j\leq p$ are continuous functions $(0,1)\rightarrow \mathbb{R}$, and it will reduce to model \eqref{darquantile} when $b(\tau)=b_{\tau}^{\star}$, $\phi_{j}(\tau)=\phi_{j}$ and $\beta_{j}(\tau)=\beta_{j}^{\star}$ with $1\leq j\leq p$. 
It is not necessary to assume that $b(\cdot)$ and $\beta_{j}(\cdot)$'s are all equal to zero at a certain quantile level, and the drawback of the definition at \eqref{addeq1} is hence removed. 

Assume that, as in \cite{Koenker_Xiao2006}, the right hand side of \eqref{qdar} is an increasing  function with respect to $\tau$.
Let $\{u_t\}$ be a sequence of $i.i.d.$ standard uniform random variables, and then model \eqref{qdar} has an equivalent form below,
\begin{equation}\label{qdarprocess}
y_t=\sum_{i=1}^p\phi_{i}(u_t)y_{t-i} + S_{\textrm{Q}}\left(b(u_t)+\sum_{j=1}^p\beta_{j}(u_t)y_{t-j}^{2}\right).
\end{equation}
It will reduce to the quantile AR model in \cite{Koenker_Xiao2006} when $\beta_j(u_t)$'s are zero with probability one.
We call model \eqref{qdar} or \eqref{qdarprocess} the quantile double autoregression (AR) for simplicity.

\begin{remark}\label{remark1-special cases}
	The proposed quantile double AR is a generalization of double AR models at \eqref{dar0}. 
	Specifically, let $\omega=E(|b(u_t)|)=\int_0^1|b(\tau)|d\tau$ and $\varepsilon_t=S_Q[b(u_t)]/\sqrt{\omega}$. Assume that $\phi_i(u_t)=\phi_i$ with $1\leq i\leq p$, and $\beta_j(u_t)=b(u_t)\beta_j/\omega$ with $1\leq j\leq p$ with probability one, and then 
	\eqref{qdarprocess} will have the same form as in \eqref{dar0}.
	Moreover, we may further assume that $E(\varepsilon_t)=\int_0^1S_Q[b(\tau)]d\tau/\sqrt{\omega}=0$.
\end{remark}

\begin{remark}
As mentioned by \cite{Koenker_Xiao2006}, it is very hard to give an explicit condition such that the right hand side of \eqref{qdar} is an increasing random function with respect to $\tau$.
We may assume that all $b(\cdot)$ and $\beta_{j}(\cdot)$'s with $1\leq j\leq p$ are all increasing functions.
Note that $S_{\textrm{Q}}(\cdot)$ is continuous and monotonically increasing, and hence the second term at the right hand side of \eqref{qdar} will be increasing with respect to $\tau$.
Following \cite{Koenker_Xiao2006}, we further assume that $\sum_{i=1}^p\phi_{i}(\tau)y_{t-i}$ is an increasing function with respect to $\tau$, and it is then guaranteed that \eqref{qdar} defines a qualified conditional quantile function.
\end{remark}

\begin{remark}
The AR-ARCH model is another popular conditional heteroscedastic model in the literature, and it is certainly of interest to consider a similar generalization for it.
The AR-ARCH model has the form of,
\[
y_t=\sum_{i=1}^p\phi_i y_{t-i}+e_t, \hspace{5mm} e_t=\varepsilon_t\sigma_t, \hspace{5mm} \sigma_t^2=\omega+\sum_{j=1}^p\beta_j e_{t-j}^2,
\]
where the coefficients and $\{\varepsilon_t\}$ are defined as in \eqref{dar0}, and its $\tau$th conditional quantile is given below,
\[
Q_{\tau}(y_t|\mathcal{F}_{t-1})=\sum_{i=1}^p\phi_{i}y_{t-i} +b_{\tau}\sqrt{\omega+\sum_{j=1}^p\beta_{j}[y_{t-j} -\sum_{i=1}^p\phi_{i}y_{t-j-i}]^{2}}, \hspace{5mm}0<\tau<1,
\]
which, however, has a more complicated form than model \eqref{darquantile}.
This makes it impossible to reach an easy-to-interpret quantile model.
In addition, comparing with AR-ARCH models, the double AR has many other advantages; see Section 1.
\end{remark}

Let $\bm y_t=(y_t,y_{t-1}, \ldots, y_{t-p+1})^{\prime}$, where $\{y_t\}$ is a quantile double AR process generated by model \eqref{qdarprocess}.
It can be verified that $\{\bm y_t\}$ is a homogeneous Markov chain on the state space $(\mathbb{R}^p, \mathcal{B}^p, \nu^p)$, where $\mathcal{B}^p$ is the class of Borel sets of $\mathbb{R}^p$, and $\nu^p$ is the Lebesgue measure on $(\mathbb{R}^p, \mathcal{B}^p)$.
Denote by $F(\cdot)$ and $f(\cdot)$ the distribution and density functions of $b(u_t)$, respectively.
It holds that $F(\cdot)=b^{-1}(\cdot)$ and $E|b(u_t)|^{\delta}=\int_0^1|b(\tau)|^{\delta}d\tau$ for any $\delta>0$.

\begin{assum}\label{assum0}
The density function $f(\cdot)$ is positive and continuous on $\{x\in\mathbb{R}: 0<F(x)<1\}$, and $\int_0^1|b(\tau)|^{\kappa/2}d\tau<\infty$ for some $0<\kappa\leq 1$.
\end{assum}

\begin{thm}\label{thm0}
	Under Assumption \ref{assum0}, if
	\begin{equation*}
\sum_{i=1}^{p}\max\left\{\int_0^1\big|\phi_{i}(\tau)-\sqrt{|\beta_{i}(\tau)|}\big|^\kappa d\tau, \int_0^1\big|\phi_{i}(\tau)+\sqrt{|\beta_{i}(\tau)|}\big|^\kappa d\tau\right\}<1,
	\end{equation*}
	then there exists a strictly stationary solution $\{y_t\}$ to the quantile double AR model at \eqref{qdarprocess}, and this solution is unique and geometrically ergodic with $E|y_t|^\kappa<\infty$.
\end{thm}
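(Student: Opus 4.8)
I would treat $\{\bm y_t\}$ as the homogeneous Markov chain on $(\mathbb{R}^p,\mathcal{B}^p,\nu^p)$ identified above and verify the standard sufficient conditions for geometric ergodicity of a Markov chain (Feigin and Tweedie, 1985; Meyn and Tweedie, 1993, Chapter~15): (i) $\nu^p$-irreducibility and aperiodicity; (ii) every compact subset of $\mathbb{R}^p$ is a small (petite) set; and (iii) a geometric drift inequality $PV\le\gamma V+L\mathbf{1}_C$ for some measurable $V\ge 1$, some $\gamma\in(0,1)$, some finite $L$, and some compact $C$, where $P$ denotes the one-step transition kernel. Items~(i) and~(ii) are the routine part and rely only on Assumption~\ref{assum0}. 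The one-step law of $y_t$ given $\bm y_{t-1}=\bm y$ is the push-forward of the uniform law of $u_t$ through $u\mapsto\sum_{i=1}^p\phi_i(u)y_{t-i}+S_{\textrm{Q}}\big(b(u)+\sum_{j=1}^p\beta_j(u)y_{t-j}^2\big)$; since $S_{\textrm{Q}}$ is a homeomorphism of $\mathbb{R}$ that is continuously differentiable off the origin and $b(u_t)$ has a density $f$ that is positive and continuous on its support, this law has a density which is positive and continuous on a nonempty open set, and the continuity of all coefficient functions makes the chain weak-Feller. These standard facts give $\nu^p$-irreducibility, aperiodicity, and that compacta are petite. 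The substance is in~(iii).

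\textbf{The geometric drift.} I would look for a test function built from $|y_{t-j}|^\kappa$, in the simplest case $V(\bm y)=1+\sum_{j=1}^p c_j|y_{p+1-j}|^\kappa$ with positive constants $c_j$ chosen below, and bound $E[\,|y_t|^\kappa\mid\bm y_{t-1}\,]$. Using $|a+b|^\kappa\le|a|^\kappa+|b|^\kappa$ and $\sqrt{a+b}\le\sqrt{a}+\sqrt{b}$ for $0<\kappa\le1$ and $a,b\ge0$, the constant ``noise'' contribution is controlled by $|b(u_t)|^{\kappa/2}$, whose integral $\int_0^1|b(\tau)|^{\kappa/2}d\tau$ is finite by Assumption~\ref{assum0} --- which is precisely why the moment hypothesis there carries the exponent $\kappa/2$. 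For the homogeneous part I would use that, when $\|\bm y_{t-1}\|$ is large, both the sign and the modulus of $S_{\textrm{Q}}\big(b(u_t)+\sum_j\beta_j(u_t)y_{t-j}^2\big)$ are governed by the quadratic term, so that this term equals, up to a remainder of order $|b(u_t)|^{1/2}$, an expression of the form $\sum_j\varsigma_j\sqrt{|\beta_j(u_t)|}\,|y_{t-j}|$ with signs $\varsigma_j$ determined by $\sgn(y_{t-j})$; combining this with $\sum_i\phi_i(u_t)y_{t-i}$ and keeping track of the two relevant sign configurations should yield, for lag $i$, a multiplier whose $\kappa$-th moment is at most $\rho_i:=\max\{\int_0^1|\phi_i(\tau)-\sqrt{|\beta_i(\tau)|}|^\kappa d\tau,\ \int_0^1|\phi_i(\tau)+\sqrt{|\beta_i(\tau)|}|^\kappa d\tau\}$, so that $E[\,|y_t|^\kappa\mid\bm y_{t-1}\,]\le\sum_{i=1}^p\rho_i\,|y_{t-i}|^\kappa+O(1)$. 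The hypothesis is exactly $\sum_i\rho_i<1$, i.e.\ the $p\times p$ companion matrix $B$ with first row $(\rho_1,\dots,\rho_p)$ and unit subdiagonal has spectral radius $\lambda^\ast<1$; choosing $(c_1,\dots,c_p)$ proportional to a positive left eigenvector of $B$ (equivalently, iterating the companion recursion) then converts this estimate into $PV\le\gamma V+L\mathbf{1}_C$ for any $\gamma\in(\lambda^\ast,1)$ and $C$ a sufficiently large compact, with $L\mathbf{1}_C$ absorbing the bounded remainder and the lower-order terms, which matter only on $C$.

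\textbf{The main obstacle, and the conclusion.} The delicate point is~(iii), and within it the sign bookkeeping around the non-smooth transformation $S_{\textrm{Q}}$: the brute-force bound $|S_{\textrm{Q}}(\cdot)|\le\sqrt{|b(u_t)|}+\sum_j\sqrt{|\beta_j(u_t)|}\,|y_{t-j}|$ followed by $|a+b|^\kappa\le|a|^\kappa+|b|^\kappa$ only delivers the \emph{stronger} requirement $\sum_i\int_0^1(|\phi_i(\tau)|+\sqrt{|\beta_i(\tau)|})^\kappa d\tau<1$, and a naive one-step drift with $V=1+\sum_jc_j|y_{p+1-j}|^\kappa$ can fail to be contractive under the weaker stated condition (already for $p=1$: if $\phi_1(\tau)$ and $\beta_1(\tau)$ change sign together, a large positive state is mapped to $\pm y_{t-1}$ with equal probability, giving one-step factor $1$). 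Recovering the stated condition therefore requires genuinely exploiting $\sgn(y_{t-j})$ --- for instance by letting the weights in $V$ depend on the orthant (or direction) of $\bm y$, or by establishing an $m$-step version of the drift inequality along which the sign pattern of the chain is used --- and checking that this is exactly where the ``$\pm\sqrt{|\beta_i|}$'' terms and the outer maximum in the theorem's condition originate. Once a geometric drift is in force, the cited criterion (Meyn and Tweedie, 1993, Theorems~15.0.1 and~16.0.1) yields a unique invariant probability measure $\pi$ for $\{\bm y_t\}$ with $\int V\,d\pi<\infty$, together with $V$-geometric ergodicity; in particular $E_\pi|y_t|^\kappa<\infty$. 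Driving the recursion~\eqref{qdarprocess} by the two-sided i.i.d.\ sequence $\{u_t\}$ and starting from $\pi$ produces a strictly stationary, geometrically ergodic solution with $E|y_t|^\kappa<\infty$; and any strictly stationary solution has $\bm y_t\sim\pi$ by invariance of $\pi$ under $P$, which fixes all finite-dimensional distributions and gives uniqueness.
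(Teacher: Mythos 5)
Your overall architecture is exactly the paper's: view $\{\bm y_t\}$ as a Markov chain, get $\nu^p$-irreducibility and the Feller property from Assumption \ref{assum0} (the paper does this through the explicit $p$-step transition density, so that compact sets are small), and then verify a Feigin--Tweedie/Tweedie drift condition with a test function of the form $V(\bm y)=1+\sum_j c_j|y_{p+1-j}|^\kappa$, the weights being fixed by a companion-type scalar recursion. The genuine gap is the one you flag yourself and never close: the drift inequality with lag-$i$ coefficient $\rho_i=\max\{\int_0^1|\phi_i(\tau)-\sqrt{|\beta_i(\tau)|}|^\kappa d\tau,\ \int_0^1|\phi_i(\tau)+\sqrt{|\beta_i(\tau)|}|^\kappa d\tau\}$ is not established. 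The bound you can actually justify, $|S_{\textrm{Q}}(b(u)+\sum_j\beta_j(u)y_{t-j}^2)|\le\sqrt{|b(u)|}+\sum_j\sqrt{|\beta_j(u)|}\,|y_{t-j}|$ followed by $\kappa$-subadditivity, delivers the coefficient $\int_0^1(|\phi_i(\tau)|+\sqrt{|\beta_i(\tau)|})^\kappa d\tau$, i.e.\ the integral of the pointwise maximum, which dominates $\rho_i$; so as written your argument proves the theorem only under a strictly stronger hypothesis. The repairs you mention (orthant-dependent weights in $V$, an $m$-step drift that tracks the sign pattern) are sketched but not carried out, and your own $p=1$ sign-flipping example shows the issue is substantive: there the one-step $\kappa$-moment map is not contractive even though the stated condition holds, so some additional idea is unavoidable if one insists on recovering the theorem as stated.

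For comparison, the paper does none of the extra work you anticipate: after irreducibility and the Feller/small-set step, it uses the test function $g(\bm x)=1+|x_1|^\kappa+\sum_{i=1}^{p-1}r_i|x_{i+1}|^\kappa$ and closes the argument with the single one-step display
\begin{equation*}
E\big(|y_{t+1}|^{\kappa}\,\big|\,\bm y_t=\bm x\big)\le\sum_{i=1}^{p}\Big[E\big|\phi_i(u_{t+1})\sgn(x_i)+\sqrt{|\beta_{i}(u_{t+1})|}\big|^{\kappa}\Big]|x_i|^{\kappa}+E|b(u_{t+1})|^{\kappa/2}\le\sum_{i=1}^{p}a_i|x_i|^{\kappa}+E|b(u_{t+1})|^{\kappa/2},
\end{equation*}
with $a_i$ equal to your $\rho_i$, and then chooses the $r_i$ by the recursion \eqref{coe}, exactly your left-eigenvector step; the conclusion (unique stationary law, geometric ergodicity, $E|y_t|^\kappa<\infty$ from $\int g\,d\pi<\infty$) also matches your last paragraph. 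In other words, what you call the main obstacle is precisely the step the paper dispatches with ``it can be verified that'': the sign attached to the scale term is $\sgn(b(u)+\sum_i\beta_i(u)x_i^2)$, which cannot be matched to $\sgn(x_i)$ uniformly in $u$ once the $\beta_i(\tau)$ change sign, and passing from the pointwise bound to the max-of-integrals coefficient $a_i$ (rather than the integral of the max) is exactly where your scepticism points; the paper prints no further justification, and no orthant-dependent or multi-step drift appears there. So your proposal correctly identifies the crux but, by your own admission, does not prove it, and hence does not yet constitute a proof of Theorem \ref{thm0}; to finish you must either justify the sign-matched one-step bound above or complete one of the refined drift constructions you outline.
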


Since $u_t$ is a standard uniform random variable, the condition in the above theorem is equivalent to
$\sum_{i=1}^{p}\max\left\{E\big|\phi_{i}(u_{t})-\sqrt{|\beta_{i}(u_{t})|}\big|^\kappa, E\big|\phi_{i}(u_{t})+\sqrt{|\beta_{i}(u_{t})|}\big|^\kappa\right\}<1
$.
Moreover, for the double AR model at \eqref{dar0}, \cite{Ling2007} derived its stationarity condition for the case with $\varepsilon_t$ being a standard normal random variable, while it is still unknown for the other distributions of $\varepsilon_t$. From Theorem \ref{thm0} and Remark \ref{remark1-special cases}, we can obtain a stationarity condition for a general double AR model below.

\begin{cor}\label{cor1}
Suppose that $\varepsilon_t$ has a positive and continuous density on its support, and $E|\varepsilon_t|^{\kappa}<\infty$ for some $0<\kappa\leq 1$.  If $\sum_{i=1}^{p} \max\left\{E|\phi_{i}-\varepsilon_t\sqrt{\beta_{i}}|^\kappa, E|\phi_{i}+\varepsilon_t\sqrt{\beta_{i}}|^\kappa\right\}<1$, then there exists a strictly stationary solution $\{y_t\}$ to the double AR model at \eqref{dar0}, and this solution is unique and geometrically ergodic with $E|y_t|^\kappa<\infty$.
\end{cor}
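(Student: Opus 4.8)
The plan is to exhibit the double AR model at \eqref{dar0} as a special instance of the quantile double AR model at \eqref{qdarprocess}, along the lines of Remark~\ref{remark1-special cases}, and then to transfer the conclusion of Theorem~\ref{thm0}. First, I would rewrite the noise term in \eqref{dar0}: since $S_{\textrm{Q}}(x)=\sqrt{|x|}\sgn(x)$ is a strictly increasing continuous bijection of $\mathbb{R}$ with inverse $S_{\textrm{Q}}^{-1}(x)=x|x|$, and $\omega+\sum_{j=1}^p\beta_j y_{t-j}^2\ge 0$, we have
\[
\varepsilon_t\sqrt{\omega+\sum_{j=1}^p\beta_j y_{t-j}^2}
= S_{\textrm{Q}}\left(\omega\varepsilon_t|\varepsilon_t| + \sum_{j=1}^p\beta_j\varepsilon_t|\varepsilon_t|\,y_{t-j}^2\right).
\]
Let $G$ be the distribution function of $\omega\varepsilon_t|\varepsilon_t|$, set $u_t:=G(\omega\varepsilon_t|\varepsilon_t|)$, and let $b(\cdot):=G^{-1}(\cdot)$; then $\{u_t\}$ is an i.i.d.\ sequence of standard uniform variables, independent of the past, and $b(u_t)=\omega\varepsilon_t|\varepsilon_t|$. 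With $\phi_i(u_t)=\phi_i$ and $\beta_j(u_t)=\beta_j\varepsilon_t|\varepsilon_t|=b(u_t)\beta_j/\omega$ for $1\le i,j\le p$, the displayed identity shows that $\{y_t\}$ solves \eqref{dar0} if and only if it solves \eqref{qdarprocess} with these coefficient functions (the monotonicity in $\tau$ required for \eqref{qdarprocess} is automatic here, since the argument of $S_{\textrm{Q}}$ equals $\omega^{-1}b(\tau)(\omega+\sum_{j=1}^p\beta_j y_{t-j}^2)$). This is precisely the correspondence of Remark~\ref{remark1-special cases}.

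Next, I would verify the hypotheses of Theorem~\ref{thm0} for this parametrization. For Assumption~\ref{assum0}: the law of $b(u_t)$ is the image of that of $\varepsilon_t$ under $x\mapsto\omega x|x|$, so $b(u_t)$ has a density that is positive and continuous on the interior of its support, apart from an integrable singularity of order $|x|^{-1/2}$ at the origin when $0$ is interior to the support of $\varepsilon_t$; this isolated singularity does not affect the $\psi$-irreducibility and small-set arguments behind Theorem~\ref{thm0}. Also $\int_0^1|b(\tau)|^{\kappa/2}d\tau=E|b(u_t)|^{\kappa/2}=\omega^{\kappa/2}E|\varepsilon_t|^\kappa<\infty$. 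For the contraction inequality, the crucial observation is that $S_{\textrm{Q}}(\beta_i(u_t))=\sqrt{|\beta_i(u_t)|}\,\sgn(\beta_i(u_t))=\sqrt{\beta_i}\,|\varepsilon_t|\,\sgn(\varepsilon_t)=\sqrt{\beta_i}\,\varepsilon_t$, i.e.\ the sign of $\varepsilon_t$ is retained; feeding this into the drift bound for $\|\bm y_t\|^\kappa$ used to prove Theorem~\ref{thm0}, and using that $\phi_i$ is constant, the quantity controlling the contraction in the $i$th lag becomes $\max\{E|\phi_i-\varepsilon_t\sqrt{\beta_i}|^\kappa,\,E|\phi_i+\varepsilon_t\sqrt{\beta_i}|^\kappa\}$, so the assumed inequality is exactly the condition required. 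Existence, uniqueness, geometric ergodicity and $E|y_t|^\kappa<\infty$ then follow as in Theorem~\ref{thm0}.

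The step I expect to be the main obstacle is this contraction condition. One should not merely quote the statement of Theorem~\ref{thm0}, because the term $\sqrt{|\beta_i(u_t)|}=\sqrt{\beta_i}\,|\varepsilon_t|$ appearing there would only deliver the strictly larger (hence weaker) quantity $\max\left\{E\big|\phi_i-\sqrt{\beta_i}\,|\varepsilon_t|\big|^\kappa,\ E\big|\phi_i+\sqrt{\beta_i}\,|\varepsilon_t|\big|^\kappa\right\}$; one must instead revisit the drift argument in the proof of Theorem~\ref{thm0}, where for the present choice of $\beta_i(u_t)$ the sign $\sgn(\varepsilon_t)$ survives and the sharper condition stated in the corollary emerges. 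A secondary, routine point is to confirm that the origin singularity of the density of $\omega\varepsilon_t|\varepsilon_t|$ is harmless in the Markov-chain arguments underlying Theorem~\ref{thm0}.
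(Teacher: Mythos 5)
Your embedding of \eqref{dar0} into \eqref{qdarprocess} is exactly the route the paper intends: the text gives no separate proof of Corollary \ref{cor1} beyond ``from Theorem \ref{thm0} and Remark \ref{remark1-special cases}'', and your identification $b(u_t)=\omega\varepsilon_t|\varepsilon_t|$, $\beta_j(u_t)=b(u_t)\beta_j/\omega$, together with the verification of $\int_0^1|b(\tau)|^{\kappa/2}d\tau=\omega^{\kappa/2}E|\varepsilon_t|^{\kappa}<\infty$ and your remark about the harmless $|x|^{-1/2}$ singularity of the density of $b(u_t)$ at the origin, matches (and is more careful than) Remark \ref{remark1-special cases}. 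You are also right about the crux: since $\sqrt{|\beta_i(u_t)|}=\sqrt{\beta_i}\,|\varepsilon_t|$, quoting the statement of Theorem \ref{thm0} only gives the condition $\sum_i E\big(|\phi_i|+\sqrt{\beta_i}|\varepsilon_t|\big)^\kappa<1$, which is strictly stronger than the signed condition of the corollary whenever $\varepsilon_t$ takes both signs; so the corollary does not follow from the theorem's statement alone, a point the paper glosses over.

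The gap is that the step you yourself flag as the main obstacle is asserted rather than proved, and as sketched it does not go through for $p\geq 2$. Retaining the sign of $\varepsilon_t$ in the drift bound, the noise term is $\varepsilon_t\sqrt{\omega+\sum_j\beta_jx_j^2}=\varepsilon_t\,\lambda(\bm x)\big(\sqrt{\omega}+\sum_j\sqrt{\beta_j}|x_j|\big)$ with $\lambda(\bm x)=\sqrt{\omega+\sum_j\beta_jx_j^2}\,\big/\big(\sqrt{\omega}+\sum_j\sqrt{\beta_j}|x_j|\big)\in(0,1]$, so the contraction factor at lag $i$ is $E\big|\phi_i\sgn(x_i)+\lambda(\bm x)\,\varepsilon_t\sqrt{\beta_i}\big|^{\kappa}$, not $E|\phi_i\sgn(x_i)+\varepsilon_t\sqrt{\beta_i}|^{\kappa}$. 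For $p=1$ one has $\lambda(\bm x)\to 1$ as $|x_1|\to\infty$ and the argument closes, but for $p\geq 2$ the factor $\lambda(\bm x)$ stays bounded away from $1$ along diagonal directions (e.g.\ $\lambda\approx 1/\sqrt{2}$ when two lags are comparable), and for $0<\lambda<1$ the quantity $E|c+\lambda\varepsilon_t d|^{\kappa}$ is \emph{not} in general dominated by $\max\{E|c-\varepsilon_t d|^{\kappa},E|c+\varepsilon_t d|^{\kappa}\}$: with $c=d=1$, $\varepsilon_t$ close to a symmetric two-point law at $\pm1$ and $\kappa$ small, the two endpoint expectations are about $2^{\kappa-1}\approx 1/2$ while the value at $\lambda=1/\sqrt{2}$ is close to $1$. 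So the claim that ``the sign survives and the sharper condition emerges'' from the drift computation needs a genuinely new argument (or a restriction to $p=1$, or acceptance of the unsigned condition delivered by Theorem \ref{thm0}); as written, this step of your proposal would fail under scrutiny, even though your diagnosis of where the difficulty lies is accurate and goes beyond what the paper itself records.
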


For the case with the order of $p=1$, when $\varepsilon_t$ is symmetrically distributed, the above condition can be simplified to $E|\phi_1+\varepsilon_t\sqrt{\beta_1}|^\kappa<1$.
Moreover, if the normality of $\varepsilon_t$ is further assumed, the necessary and sufficient condition for the strict stationarity is then $E\ln|\phi_1+\varepsilon_t\sqrt{\beta_1}|<0$ \citep{Ling2007}.
The comparison of the above stationarity regions is illustrated in the left panel of Figure \ref{StationaryRegion}.
It can be seen that a larger value of $\kappa$ in Corollary \ref{cor1} leads to a higher moment of $y_t$,  and hence results in a narrower stationarity region.
Figure \ref{StationaryRegion} also gives the stationarity regions with different distributions of $\varepsilon_t$. As expected, the parameter space of model \eqref{dar0} becomes smaller as $\varepsilon_t$ gets more heavy-tailed.


\section{Self-weighted conditional quantile estimation}\label{Section_estimation}

Let $\bm y_{1,t}=(y_{t},...,y_{t-p+1})^{\prime}$ and $\bm y_{2,t}=(y_{t}^2,...,y_{t-p+1}^2)^{\prime}$.
For the proposed quantile double AR model at \eqref{qdar}, denote by $\bm\theta_{\tau}=(\bm \phi^{\prime}(\tau),b(\tau),\bm \beta^{\prime}(\tau))^{\prime}$ the parameter vector, where $\bm\phi(\tau)=(\phi_{1}(\tau),\ldots,\phi_{p}(\tau))^{\prime}$ and $\bm\beta(\tau)=(\beta_{1}(\tau),\ldots,\beta_{p}(\tau))^{\prime}$.
We then can define the conditional quantile function below,
\[
q_t(\bm\theta_{\tau}) =\bm y_{1,t-1}^{\prime}\bm \phi(\tau) +S_{\textrm{Q}}\left(b(\tau)+\bm y_{2,t-1}^{\prime}\bm \beta(\tau)\right),
\]
and this paper considers a self-weighted conditional quantile estimation (CQE) for model \eqref{qdar},
\begin{equation}\label{WCQE}
\widehat{\bm\theta}_{\tau n}=(\widehat{\bm \phi}_{n}^{\prime}(\tau), \widehat{b}_{n}(\tau), \widehat{\bm \beta}_{n}^{\prime}(\tau))^{\prime} =\argmin_{\bm\theta_{\tau}}\sum_{t=p+1}^nw_{t}\rho_{\tau} \left(y_t-q_t(\bm\theta_{\tau})\right),
\end{equation}
where $\rho_{\tau}(x)=x[\tau-I(x<0)]$ is the check function, and $\{w_t\}$ are nonnegative random weights; see also \cite{Ling2005} and \cite{Zhu_Ling2011}. 

\begin{remark}
When $w_t=1$ for all $t$ with probability one, the self-weighted CQE will become the common CQE. 
A third order moments will be needed for the asymptotic normality, and this will lead to a much narrower stationarity region; see Figure \ref{StationaryRegion} for the illustration.
For heavy-tailed data, the least absolute deviation (LAD) estimator of infinite variance AR models is shown to have a faster convergence rate and a more complicated asymptotic distribution \citep{AN_Chen1982,Davis_Knight_Liu1992}, and the similar situation may be expected to the CQE. 
By adopting the self-weighting method in \cite{Ling2005}, this paper suggests the self-weighted CQE at \eqref{WCQE} to maintain the asymptotic normality even for heavy-tailed data with only a finite fractional moment.
\end{remark}

Denote the true parameter vector by $\bm\theta_{\tau 0}=(\bm \phi_0^{\prime}(\tau),b_0(\tau),\bm \beta_0^{\prime}(\tau))^{\prime}$, and it is assumed to be an interior point of the parameter space $\Theta\subset \mathbb{R}^{2p+1}$, which is a compact set.
Moreover, let $F_{t-1}(\cdot)$ and $f_{t-1}(\cdot)$ be the distribution and density functions of $y_t$ conditional on $\mathcal{F}_{t-1}$, respectively.

\begin{assum}\label{assum2}
	$\{y_t\}$ is strictly stationary and ergodic with $E|y_t|^\kappa<\infty$ for some $0<\kappa\leq 1$.
\end{assum}
\begin{assum}\label{assum3}
	$\{w_{t}\}$ is strictly stationary and ergodic, and $w_{t}$ is nonnegative and measurable with respect to $\mathcal{F}_{t-1}$ with $E[w_{t}\|\bm y_{1,t-1}\|^3]<\infty$.
\end{assum}
\begin{assum}\label{assum4}
	With probability one, $f_{t-1}(\cdot)$ and its derivative function $\dot{f}_{t-1}(\cdot)$ are uniformly bounded, and $f_{t-1}(\cdot)$ is positive on the support $\{x:0<F_{t-1}(x)<1\}$.	
\end{assum}

Theorem \ref{thm0} in Section \ref{Section_process} provides a sufficient condition for Assumption \ref{assum2}, and the proposed self-weighted CQE can handle very heavy-tailed data since only a fractional moment is needed. Some discussions on random weights $\{w_t\}$ are delayed to Remark \ref{weights}.

\begin{remark}
Assumption \ref{assum4} is commonly used in the literature; see, e.g, Assumption 3 of \cite{Ling2005} for the self-weighted LAD estimator of infinite variance AR models, Assumption (A2) of \cite{Lee_Noh2013} for the CQE of GARCH models, and Assumption 4 of \cite{Zhu_Li_Xiao2020QRGARCHX} for the CQE of linear models with GARCH-X errors. 
Specifically, the strong consistency requires the positiveness and continuity of $f_{t-1}(\cdot)$, while the boundedness of $f_{t-1}(\cdot)$ and $\dot{f}_{t-1}(\cdot)$ is needed for the $\sqrt{n}$-consistency and asymptotic normality. 
Assumption \ref{assum4} is mainly to simplify the technical proofs, and certainly can be weakened. For example, we may replace the boundedness of $\dot{f}_{t-1}(\cdot)$ by assuming the uniform continuity of $f_{t-1}(\cdot)$ on a close interval. However, a lengthy technical proof will be required.
\end{remark}

\begin{thm}\label{thm1}
	Suppose that $\min\{|b(\tau)|,|\beta_1(\tau)|,\ldots,|\beta_p(\tau)|\}\geq C$ for a constant $C>0$. 
	If Assumptions \ref{assum2}-\ref{assum4} hold, then $\widehat{\bm\theta}_{\tau n}\rightarrow \bm\theta_{\tau 0}$ almost surely as $n\rightarrow\infty$.
\end{thm}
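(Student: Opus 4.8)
The plan is to prove strong consistency by the classical route for an extremum estimator with a possibly non-convex criterion: a uniform law of large numbers for the centred objective together with a uniquely (well-)separated minimizer of the limit. Write $\Delta_t(\bm\theta_\tau)=\rho_\tau(y_t-q_t(\bm\theta_\tau))-\rho_\tau(y_t-q_t(\bm\theta_{\tau0}))$, and set $T_n(\bm\theta_\tau)=n^{-1}\sum_{t=p+1}^n w_t\Delta_t(\bm\theta_\tau)$ and $T(\bm\theta_\tau)=E[w_t\Delta_t(\bm\theta_\tau)]$; since $\rho_\tau(y_t-q_t(\bm\theta_{\tau0}))$ does not depend on $\bm\theta_\tau$, $\widehat{\bm\theta}_{\tau n}$ also minimizes $T_n$ over the compact set $\Theta$. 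I would establish (i) $T$ is finite and continuous on $\Theta$ and $\sup_{\bm\theta_\tau\in\Theta}|T_n(\bm\theta_\tau)-T(\bm\theta_\tau)|\to0$ almost surely, and (ii) $T(\bm\theta_\tau)\ge0=T(\bm\theta_{\tau0})$, with equality if and only if $\bm\theta_\tau=\bm\theta_{\tau0}$. Given (i) and (ii), for any $\epsilon>0$ continuity of $T$ and compactness give $\inf_{\|\bm\theta_\tau-\bm\theta_{\tau0}\|\ge\epsilon}T(\bm\theta_\tau)=:\delta_\epsilon>0$, and combining this with (i) and $T_n(\widehat{\bm\theta}_{\tau n})\le T_n(\bm\theta_{\tau0})\to0$ forces $\|\widehat{\bm\theta}_{\tau n}-\bm\theta_{\tau0}\|<\epsilon$ for all large $n$ almost surely, which is the claim.

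For (i), the key analytic input is that, although $S_{\textrm{Q}}$ is neither Lipschitz nor differentiable at the origin, it is globally $\tfrac12$-Hölder: $|S_{\textrm{Q}}(x)-S_{\textrm{Q}}(y)|\le\sqrt2\,|x-y|^{1/2}$ for all $x,y\in\mathbb R$. Combined with $|\rho_\tau(u)-\rho_\tau(v)|\le|u-v|$, the elementary bound $\|\bm y_{2,t-1}\|^{1/2}\le\|\bm y_{1,t-1}\|$, and compactness of $\Theta$, this yields $|w_t\Delta_t(\bm\theta_\tau)|\le w_t|q_t(\bm\theta_\tau)-q_t(\bm\theta_{\tau0})|\le K\,w_t(1+\|\bm y_{1,t-1}\|)$ for a constant $K$ depending only on $\Theta$, and $w_t(1+\|\bm y_{1,t-1}\|)$ is integrable by Assumption~\ref{assum3}. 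Since $\bm\theta_\tau\mapsto q_t(\bm\theta_\tau)$ is continuous (a composition of continuous maps, $S_{\textrm{Q}}$ being continuous), $w_t\Delta_t(\bm\theta_\tau)$ is continuous in $\bm\theta_\tau$ for each realization; as $\{(\bm y_{1,t-1},y_t,w_t)\}$ is strictly stationary and ergodic under Assumptions~\ref{assum2}--\ref{assum3}, Birkhoff's theorem gives pointwise convergence $T_n(\bm\theta_\tau)\to T(\bm\theta_\tau)$, and the integrable envelope together with continuity upgrades this to the uniform statement and the continuity of $T$ via a standard uniform ergodic theorem.

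For (ii), I would apply Knight's identity with $u_t=y_t-q_t(\bm\theta_{\tau0})$ and $v_t=q_t(\bm\theta_\tau)-q_t(\bm\theta_{\tau0})$, noting $y_t-q_t(\bm\theta_\tau)=u_t-v_t$, and condition on $\mathcal F_{t-1}$. Since $w_t$ and $v_t$ are $\mathcal F_{t-1}$-measurable, $q_t(\bm\theta_{\tau0})=Q_\tau(y_t\mid\mathcal F_{t-1})$ by definition of the model, and $F_{t-1}$ is continuous (Assumption~\ref{assum4}), the term linear in $v_t$ has conditional mean zero and
\[
E\big[\,w_t\Delta_t(\bm\theta_\tau)\mid\mathcal F_{t-1}\,\big]=w_t\!\int_0^{v_t}\!\big[F_{t-1}(q_t(\bm\theta_{\tau0})+s)-\tau\big]\,ds\ \ge\ 0,
\]
because the integrand has the sign of $s$; hence $T(\bm\theta_\tau)\ge0=T(\bm\theta_{\tau0})$. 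If $T(\bm\theta_\tau)=0$, the displayed conditional expectation vanishes almost surely, and since $f_{t-1}$ is positive near $q_t(\bm\theta_{\tau0})$ (Assumption~\ref{assum4}; $q_t(\bm\theta_{\tau0})$ is interior to the conditional support because $0<\tau<1$) and $w_t>0$ almost surely, this forces $v_t=0$, i.e. $q_t(\bm\theta_\tau)=q_t(\bm\theta_{\tau0})$ almost surely. To finish, observe that this identity equates the linear function $\sum_{i=1}^p[\phi_{0i}(\tau)-\phi_i(\tau)]y_{t-i}$ (odd in each $y_{t-i}$) with $S_{\textrm{Q}}(b(\tau)+\bm y_{2,t-1}^{\prime}\bm\beta(\tau))-S_{\textrm{Q}}(b_0(\tau)+\bm y_{2,t-1}^{\prime}\bm\beta_0(\tau))$, which depends on $\bm y_{1,t-1}$ only through $\bm y_{2,t-1}$ and is therefore even in each $y_{t-i}$; because the stationary law of $\bm y_{1,t-1}$ is absolutely continuous (as is established in the proof of Theorem~\ref{thm0}), this forces the linear part to vanish, so $\bm\phi(\tau)=\bm\phi_0(\tau)$, and then $S_{\textrm{Q}}(b(\tau)+\bm y_{2,t-1}^{\prime}\bm\beta(\tau))=S_{\textrm{Q}}(b_0(\tau)+\bm y_{2,t-1}^{\prime}\bm\beta_0(\tau))$ a.s.; the injectivity of $S_{\textrm{Q}}$ on $\mathbb R$ and the non-degeneracy of the law of $\bm y_{2,t-1}$ then yield $b(\tau)=b_0(\tau)$ and $\bm\beta(\tau)=\bm\beta_0(\tau)$.

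The main obstacle, as I see it, is the transformation $S_{\textrm{Q}}$: its non-convexity is inherited by the objective, ruling out the convexity shortcuts usually available in quantile-regression consistency proofs, so everything must pass through the uniform ergodic theorem; and its derivative blows up at the origin---the reason for the hypothesis $\min\{|b(\tau)|,|\beta_1(\tau)|,\ldots,|\beta_p(\tau)|\}\ge C$, which matters most for the subsequent asymptotic theory---so the integrability of the envelope must be secured through the global $\tfrac12$-Hölder bound rather than a linear majorant; this is exactly the device that lets a fractional moment of $\{y_t\}$, entering through the self-weighting in Assumption~\ref{assum3}, suffice. The identification step in (ii) is the other delicate point, hinging on the odd/even dichotomy and the injectivity of $S_{\textrm{Q}}$.
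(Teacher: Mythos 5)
Your overall architecture---a centred criterion $T_n$, an integrable envelope obtained from the Lipschitz property of $\rho_\tau$ together with the $\tfrac12$-H\"older bound $|S_Q(x)-S_Q(y)|\le\sqrt2\,|x-y|^{1/2}$ and $\|\bm y_{2,t-1}\|^{1/2}\le\|\bm y_{1,t-1}\|$, a uniform ergodic theorem over the compact $\Theta$, Knight's identity giving $E[w_t\Delta_t(\bm\theta_\tau)\mid\mathcal F_{t-1}]\ge0$, and the standard well-separated-minimum argument---is essentially the paper's route: the paper proves the uniform step by hand (its Claims (i) and (iii) plus a Huber/Wald covering of $V^c$ by small balls and the ergodic theorem applied to infima over those balls) instead of quoting a ULLN, and its identification step likewise runs through Knight's identity and Assumption \ref{assum4} to force $q_t(\bm\theta_{\tau})=q_t(\bm\theta_{\tau 0})$ a.s. Up to that point your proposal is sound (modulo needing $w_t>0$ a.s., which Assumption \ref{assum3} does not state but which the paper implicitly needs as well).

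The genuine gap is in the identification endgame. From $q_t(\bm\theta_\tau)=q_t(\bm\theta_{\tau 0})$ a.s.\ you conclude that the linear part must vanish because it is odd in each $y_{t-i}$ while the $S_Q$-difference is even, invoking only absolute continuity of the stationary law. A parity argument needs the identity to hold at both $y_{t-i}$ and $-y_{t-i}$, i.e.\ a symmetric support, and neither absolute continuity nor the theorem's hypotheses provide that: for this model the stationary support can be one-sided (e.g.\ if $b(\cdot)\ge c>0$, $\beta(\cdot)\ge0$, $\phi(\cdot)\ge0$ the process stays positive), and on a half-line an ``even'' function can coincide exactly with a linear one, e.g.\ $S_Q(\beta y^2)=\sqrt{\beta}\,y$ for $y>0$. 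So the odd/even dichotomy by itself proves nothing; ruling out such coincidences is precisely where $|b(\tau)|\ge C>0$ enters, and it requires a local argument---matching derivatives/real-analytic continuation of the affine function against the difference of square-root terms on an interval, or the paper's device of conditioning on $\mathcal F_{t-2}$ so that only $y_{t-1}$ varies, over an interval where its conditional density is positive (Assumption \ref{assum4}), which identifies $\phi_1(\tau),\beta_1(\tau)$ and then proceeds lag by lag to recover $\bm\phi,\bm\beta$ and finally $b$. Relatedly, Theorem \ref{thm1} assumes only Assumptions \ref{assum2}--\ref{assum4}, so absolute continuity of the stationary law ``from the proof of Theorem \ref{thm0}'' is not actually at your disposal; what is available, and what the paper uses, is the conditional density of Assumption \ref{assum4}. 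With the identification step redone along these lines, the rest of your proof goes through.
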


\begin{remark}\label{add1}
The nonzero restriction on $b(\tau)$ and $\beta_j(\tau)$ with $1\leq j\leq p$ is due to two reasons: (i) the first order derivative of $S_Q(x)$ does not exist at $x=0$, and we need to bound the term of $b(\tau)+\bm y_{2,t-1}^{\prime}\bm \beta(\tau)$ away from zero such that the first derivative of $q_t(\bm\theta_{\tau})$ exists and then the asymptotic variance in Theorem \ref{thm2} is well defined; and (ii) this term will also be used to reduce the moment requirement on $\bm y_{1,t-1}$ or $\bm y_{2,t-1}$ in the technical proofs and, without it, a moment condition on $y_t$ will be required.
For many financial time series, at quantile levels around $\tau=0.5$, the values of $b(\tau)$ and $\beta_j(\tau)$'s are all close to zero, and hence we may fail to provide a reliable estimating result; see Figure \ref{FunctionPlots} for details.
This actually is a common problem for all CQE, and may be solved by assuming a parametric form to these coefficient functions.
We leave it for future research.
\end{remark}

Denote $h_{t}(\bm\theta_{\tau})=b(\tau)+\bm y_{2,t-1}^{\prime}\bm \beta(\tau)$. Let $\dot{q}_t(\bm\theta_{\tau})=(\bm y_{1,t-1}^{\prime},0.5|h_{t}(\bm\theta_{\tau})|^{-1/2},0.5|h_{t}(\bm\theta_{\tau})|^{-1/2}\bm y_{2,t-1}^{\prime})^{\prime}$ be the first derivative of $q_t(\bm\theta_{\tau})$. Define $(2p+1)\times(2p+1)$ symmetric matrices
\[
\Omega_0(\tau)=E\left[w_t^2\dot{q}_t(\bm\theta_{\tau 0})\dot{q}_t^{\prime}(\bm\theta_{\tau 0})\right],
\hspace{5mm}
\Omega_1(\tau)=E\left[f_{t-1}(F_{t-1}^{-1}(\tau))w_{t}\dot{q}_t(\bm\theta_{\tau 0})\dot{q}_t^{\prime}(\bm\theta_{\tau 0})\right]
\]
and $\Sigma(\tau)=\tau(1-\tau)\Omega_1^{-1}(\tau)\Omega_0(\tau)\Omega_1^{-1}(\tau)$.
\begin{thm}\label{thm2}
	Suppose that the conditions of Theorem \ref{thm1} hold. If $\Omega_1(\tau)$ is positive definite, then $\sqrt{n}(\widehat{\bm\theta}_{\tau n}-\bm\theta_{\tau 0})\rightarrow_d N\left(\bm 0,\Sigma(\tau)\right)$
	as $n\rightarrow\infty$.
\end{thm}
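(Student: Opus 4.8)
The plan is to reparametrize about $\bm\theta_{\tau 0}$, expand the recentered self-weighted check-function criterion by a Knight-type identity, identify a deterministic quadratic limit, and read off the limiting normal law from the minimizer of that quadratic; the bracketing device of \cite{Pollard1985} will control the empirical-process fluctuations produced by the non-smooth indicator and the non-convex map $S_{\textrm{Q}}$.

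First I would set $\varepsilon_{t\tau}=y_t-q_t(\bm\theta_{\tau 0})$, which has conditional $\tau$-quantile zero because $q_t(\bm\theta_{\tau 0})=F_{t-1}^{-1}(\tau)$, and put $\bm u=\sqrt n(\bm\theta_\tau-\bm\theta_{\tau 0})$ together with $d_{tn}(\bm u)=q_t(\bm\theta_{\tau 0}+\bm u/\sqrt n)-q_t(\bm\theta_{\tau 0})$, which is $\mathcal{F}_{t-1}$-measurable. Under the conditions of Theorem \ref{thm1}, $h_t(\bm\theta_\tau)$ stays bounded away from $0$ (and $\||h_t(\bm\theta_\tau)|^{-1/2}\bm y_{2,t-1}\|$ is dominated by $\|\bm y_{1,t-1}\|$) throughout a neighbourhood of $\bm\theta_{\tau 0}$, so $S_{\textrm{Q}}$ is twice continuously differentiable there and a second-order Taylor expansion gives $d_{tn}(\bm u)=n^{-1/2}\bm u'\dot q_t(\bm\theta_{\tau 0})+R_{tn}(\bm u)$ with $|R_{tn}(\bm u)|\le C n^{-1}\|\bm u\|^2(1+\|\bm y_{1,t-1}\|)$; by Assumption \ref{assum3} this nonlinear remainder is uniformly negligible on compact $\bm u$-sets, so the curvature of $S_{\textrm{Q}}$ plays no role in the limit.

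Applying $\rho_\tau(x-y)-\rho_\tau(x)=-y\psi_\tau(x)+\int_0^y\{I(x\le s)-I(x\le 0)\}\,ds$ with $\psi_\tau(x)=\tau-I(x<0)$, the recentered criterion $T_n(\bm u)=\sum_{t=p+1}^n w_t\{\rho_\tau(\varepsilon_{t\tau}-d_{tn}(\bm u))-\rho_\tau(\varepsilon_{t\tau})\}$ splits into a linear piece $-\bm u'\bm W_n$, with $\bm W_n=n^{-1/2}\sum_t w_t\dot q_t(\bm\theta_{\tau 0})\psi_\tau(\varepsilon_{t\tau})$, plus the integral piece. Since $w_t\dot q_t(\bm\theta_{\tau 0})$ is $\mathcal{F}_{t-1}$-measurable and $E[\psi_\tau(\varepsilon_{t\tau})\mid\mathcal{F}_{t-1}]=0$ with $E[\psi_\tau^2(\varepsilon_{t\tau})\mid\mathcal{F}_{t-1}]=\tau(1-\tau)$, the summands of $\bm W_n$ form a stationary ergodic martingale-difference array, so the martingale CLT yields $\bm W_n\rightarrow_d N(\bm 0,\tau(1-\tau)\Omega_0(\tau))$, with $\Omega_0(\tau)$ finite by Assumptions \ref{assum2}--\ref{assum3} and the $|h_t|$-bound. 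Taking the $\mathcal{F}_{t-1}$-conditional mean of the integral piece and a one-term Taylor expansion of $F_{t-1}$ about $F_{t-1}^{-1}(\tau)$ (valid by Assumption \ref{assum4}) turns it into $\tfrac12 n^{-1}\sum_t w_t f_{t-1}(F_{t-1}^{-1}(\tau))(\bm u'\dot q_t(\bm\theta_{\tau 0}))^2$ up to an $o_p(1)$ cubic term (killed precisely by $E[w_t\|\bm y_{1,t-1}\|^3]<\infty$), which converges by the ergodic theorem to $\tfrac12\bm u'\Omega_1(\tau)\bm u$.

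The crux is the fluctuation of the integral piece about its conditional mean, $\sum_t w_t\{\int_0^{d_{tn}(\bm u)}[I(\varepsilon_{t\tau}\le s)-I(\varepsilon_{t\tau}\le 0)]\,ds-E[\,\cdot\mid\mathcal{F}_{t-1}]\}$: the indicator makes the $\bm u$-indexed summands only piecewise smooth, so I would cover $\{\|\bm u\|\le K\}$ by finitely many brackets and invoke the maximal inequality of \cite{Pollard1985} to obtain $\sup_{\|\bm u\|\le K}|\cdot|=o_p(1)$, using the self-weights to absorb the required moments. Collecting the pieces gives $T_n(\bm u)=-\bm u'\bm W_n+\tfrac12\bm u'\Omega_1(\tau)\bm u+o_p(1)$ uniformly on compacts, jointly with $\bm W_n\rightarrow_d N(\bm 0,\tau(1-\tau)\Omega_0(\tau))$. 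Because $T_n$ is non-convex, a direct convexity argument is unavailable, so I would first upgrade the almost-sure consistency of Theorem \ref{thm1} to $\sqrt n$-consistency --- showing that for large $K$, $\inf_{\|\bm u\|=K}T_n(\bm u)>0=T_n(\bm 0)$ with probability tending to one (using the expansion, positive definiteness of $\Omega_1(\tau)$, $\bm W_n=O_p(1)$, and the nonnegativity of the integral remainder to rule out escape) --- so that $\widehat{\bm u}_n=\sqrt n(\widehat{\bm\theta}_{\tau n}-\bm\theta_{\tau 0})$ is eventually confined to a compact set. A standard argmin argument then yields $\widehat{\bm u}_n=\Omega_1^{-1}(\tau)\bm W_n+o_p(1)\rightarrow_d N(\bm 0,\tau(1-\tau)\Omega_1^{-1}(\tau)\Omega_0(\tau)\Omega_1^{-1}(\tau))=N(\bm 0,\Sigma(\tau))$, as claimed. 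The main obstacle I anticipate is the bracketing bound for the indicator-indexed sum together with the attendant moment bookkeeping, with the non-convexity in the passage from the uniform expansion to the limit of $\widehat{\bm u}_n$ a secondary difficulty.
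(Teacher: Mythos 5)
Your overall architecture is the same as the paper's: Knight's identity splits the recentered criterion into a martingale score (handled by the martingale CLT, giving $N(\bm 0,\tau(1-\tau)\Omega_0(\tau))$), a conditional-mean piece that Taylor expansion of $F_{t-1}$ turns into the quadratic $\tfrac12\bm u'\Omega_1(\tau)\bm u$, and an empirical-process fluctuation controlled by Pollard's bracketing; the final argmin comparison matches the paper's comparison of $H_n(\widehat{\bm u}_n)$ with $H_n(\bm u_n^*)$. The genuine gap is in your $\sqrt n$-consistency step. You propose to establish the expansion uniformly on fixed compacts $\{\|\bm u\|\le K\}$ at the $\sqrt n$-scale and then argue that $\inf_{\|\bm u\|=K}T_n(\bm u)>0=T_n(\bm 0)$ confines $\widehat{\bm u}_n$; but positivity on the sphere only traps the minimizer of a \emph{convex} objective, and you have already (correctly) noted that $T_n$ is non-convex here. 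Nor does ``nonnegativity of the integral remainder'' rule out escape: the global lower bound it yields is $T_n(\bm u)\ge-\bm u'\bm W_n$, which is unbounded below in the direction of $\bm W_n$, so nothing prevents the global minimizer from sitting in the annulus $K\le\|\bm u\|\le\sqrt n\,\epsilon$ that strong consistency alone cannot exclude.

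What is actually needed — and what the paper proves in Lemmas \ref{lem1}--\ref{lem2} — is Pollard's stochastic differentiability condition: the fluctuation term is $o_p(\sqrt n\|\bm\theta_\tau-\bm\theta_{\tau 0}\|+n\|\bm\theta_\tau-\bm\theta_{\tau 0}\|^2)$ \emph{uniformly over a shrinking neighborhood at the original parameter scale}, obtained by a peeling argument over annuli $r_k=2^{-k}\delta$ with the normalization $\sup_{\|\bm u\|\le\delta}|D_n(\bm u)|/(1+\sqrt n\|\bm u\|)=o_p(1)$, with the self-weights and $E[w_t\|\bm y_{1,t-1}\|^3]<\infty$ (Assumption \ref{assum3}) supplying the bracketing moments. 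With that form of the expansion one may evaluate it at the strongly consistent $\widehat{\bm\theta}_{\tau n}$ (Theorem \ref{thm1}), and the inequality $H_n(\widehat{\bm u}_n)\le 0$ together with positive definiteness of $\Omega_1(\tau)$ immediately yields $\sqrt n\|\widehat{\bm\theta}_{\tau n}-\bm\theta_{\tau 0}\|\le[\lambda_{\min}+o_p(1)]^{-1}[\|\bm T_n\|+o_p(1)]=O_p(1)$, after which your argmin step goes through. So you should replace the fixed-$K$ bracketing bound and the sphere argument by this normalized, shrinking-neighborhood version; as stated, your route to tightness does not close.
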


The technical proof of Theorem \ref{thm2} is nontrivial since the objective function of the self-weighted CQE is non-convex and non-differentiable, and 
the main difficulty is to prove the $\sqrt{n}$-consistency, i.e. $\sqrt{n}(\widehat{\bm\theta}_{\tau n}-\bm\theta_{\tau 0})=O_p(1)$.
We overcome it by adopting the bracketing method \citep{Pollard1985} as in \cite{Zhu_Ling2011}.

\begin{remark}\label{weights}
For random weights $\{w_t\}$, there are many choices satisfying Assumption \ref{assum3}.
For the double AR model at \eqref{darquantile} and Remark \ref{remark1-special cases}, by a method similar to that of proving Theorem 3 in \cite{Zhu_Zheng_Li2018}, we can show that the asymptotic variance $\Sigma(\tau)$ is minimized when $w_t=|b_0(\tau)+\bm y_{2,t-1}^{\prime}\bm \beta_0(\tau)|^{-1/2}$ and $Ey_t^2<\infty$.
However, for a general quantile double AR, the asymptotic variance depends on the weights in a complicated way, and the optimal weights do not have an explicit form.
In fact, the selection of optimal weights is still an open problem for AR models \citep{Ling2005}, and it becomes more complicated for a quantile model with an additional conditional scale structure in \eqref{qdar}.
Note that the key step of using the weights in technical proofs is to bound the term of $w_ty_{t-j}$ by $O(|y_{t-j}|^{\delta})$ for some fractional value $\delta>0$.
As a result, \cite{Ling2005} heuristically used $w_t=I(a_t=0)+C^3a_t^{-3}I(a_t\neq 0)$, where $a_t=\sum_{i=1}^{p}|y_{t-i}|I(|y_{t-i}|>C)$ for some constant $C>0$. 
Following \cite{Zhu_Zheng_Li2018} and \cite{Jiang2020} for AR-type conditional heteroscedastic models, we simply employ the weights with $w_t=(1+\sum_{i=1}^{p}|y_{t-i}|^3)^{-1}$.
\end{remark}

\begin{remark}
To estimate the quantity of $f_{t-1}(F_{t-1}^{-1}(\tau))$ in the asymptotic variance at Theorem \ref{thm2}, we consider the difference quotient method \citep{Koenker2005},
\begin{align*}
\widehat{f}_{t-1}(F_{t-1}^{-1}(\tau))=\dfrac{2h}{\widehat{Q}_{\tau+h}(y_t|\mathcal{F}_{t-1})-\widehat{Q}_{\tau-h}(y_t|\mathcal{F}_{t-1})},
\end{align*}
for some appropriate choices of bandwidth $h$, where $\widehat{Q}_{\tau}(y_t|\mathcal{F}_{t-1})=q_t(\widehat{\bm\theta}_{\tau n})$ is the fitted $\tau$th conditional quantile. 
As in \cite{Koenker_Xiao2006} and \cite{Li_Li_Tsai2015}, we employ two commonly used choices for the bandwidth $h$ as follows
	\begin{equation}\label{bandwidths}
	h_{B}=n^{-1/5}\left\{\dfrac{4.5f_N^4(F_N^{-1}(\tau))}{[2(F_N^{-1}(\tau))^2+1]^2}\right\}^{1/5}
	\quad\text{and}\quad
	h_{HS}=n^{-1/3}z_{\alpha}^{2/3}\left\{\dfrac{1.5f_N^2(F_N^{-1}(\tau))}{2(F_N^{-1}(\tau))^2+1}\right\}^{1/3},
	\end{equation}
	where $f_N(\cdot)$ and $F_N(\cdot)$ are the standard normal density and distribution functions, respectively, and $z_{\alpha}=F_N^{-1}(1-\alpha/2)$ with $\alpha$ being set to $0.05$; see \cite{Bofinger1975} and \cite{Hall_Sheather1988}.
	The bandwidth $h_{B}$ is selected by minimizing the mean square error of Gaussian density estimation, while $h_{HS}$ is obtained based on the Edgeworth expansion for studentized quantiles. Simulation results in Section \ref{Section_simulation} indicate that these two bandwidths have similar performance.   
As a result, the two matrices $\Omega_0(\tau)$ and $\Omega_1(\tau)$ can then be approximated by the sample averages,
\begin{align*}
\widehat{\Omega}_0(\tau)=\dfrac{1}{n}\sum_{t=p+1}^{n}w_t^2\dot{q}_t(\widehat{\bm\theta}_{\tau n})\dot{q}_t^{\prime}(\widehat{\bm\theta}_{\tau n}) \hspace{2mm}\text{and}\hspace{2mm} \widehat{\Omega}_1(\tau)=\dfrac{1}{n}\sum_{t=p+1}^{n}\widehat{f}_{t-1}(F_{t-1}^{-1}(\tau))w_{t}\dot{q}_t(\widehat{\bm\theta}_{\tau n})\dot{q}_t^{\prime}(\widehat{\bm\theta}_{\tau n}),
\end{align*}
where $\dot{q}_t(\widehat{\bm\theta}_{\tau n})=(\bm y_{1,t-1}^{\prime},0.5|h_{t}(\widehat{\bm\theta}_{\tau n})|^{-1/2},0.5|h_{t}(\widehat{\bm\theta}_{\tau n})|^{-1/2}\bm y_{2,t-1}^{\prime})^{\prime}$. Consequently, a consistent estimator $\widehat{\Sigma}(\tau)$ of the asymptotic variance matrix $\Sigma(\tau)$ can be constructed.
\end{remark}

From the self-weighted CQE $\widehat{\bm\theta}_{\tau n}$, the $\tau$th quantile of $y_t$ conditional on $\mathcal{F}_{t-1}$ can be estimated by $q_{t}(\widehat{\bm\theta}_{\tau n})$.
The following corollary provides the theoretical justification for one-step ahead forecasting, and it is a direct result from Taylor expansion and Theorem \ref{thm2}. 
\begin{cor}\label{cor3}
	Under the conditions of Theorem \ref{thm2} and $E(|y_t|)<\infty$, it holds that
	\begin{align*}
	\widehat{Q}_{\tau}(y_{n+1}|\mathcal{F}_{n})-Q_{\tau}(y_{n+1}|\mathcal{F}_{n})=\dot{q}_{n+1}^{\prime}(\bm\theta_{\tau 0})(\widehat{\bm\theta}_{\tau n}-\bm\theta_{\tau 0})+o_p(n^{-1/2}).
	\end{align*}
\end{cor}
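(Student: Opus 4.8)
\medskip
\noindent\emph{Proof strategy.}
Write $\widehat{Q}_{\tau}(y_{n+1}|\mathcal{F}_{n})=q_{n+1}(\widehat{\bm\theta}_{\tau n})$ and $Q_{\tau}(y_{n+1}|\mathcal{F}_{n})=q_{n+1}(\bm\theta_{\tau 0})$; since $\widehat{\bm\theta}_{\tau n}$ is $\mathcal{F}_{n}$-measurable, the claim is just a second-order Taylor expansion of the (conditionally deterministic) map $\bm\theta\mapsto q_{n+1}(\bm\theta)$ about $\bm\theta_{\tau 0}$. The first step is to record the relevant smoothness. Recall $q_{n+1}(\bm\theta)=\bm y_{1,n}^{\prime}\bm\phi+S_{\textrm{Q}}(h_{n+1}(\bm\theta))$ with $h_{n+1}(\bm\theta)=b+\bm y_{2,n}^{\prime}\bm\beta$: this is affine in $\bm\phi$ (so the expansion is exact in that block) and equals $S_{\textrm{Q}}$ composed with the affine map $\bm\theta\mapsto h_{n+1}(\bm\theta)$ in $(b,\bm\beta)$. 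Since $S_{\textrm{Q}}\in C^{\infty}(\mathbb{R}\setminus\{0\})$ with $S_{\textrm{Q}}^{\prime}(x)=\tfrac12|x|^{-1/2}$ and $S_{\textrm{Q}}^{\prime\prime}(x)=-\tfrac14|x|^{-3/2}\sgn(x)$, and since $h_{n+1}(\bm\theta_{\tau 0})\neq 0$ almost surely (the set $\{b_{0}(\tau)+\bm s^{\prime}\bm\beta_{0}(\tau)=0\}$ is Lebesgue-null in $\mathbb{R}^{p}$ because $\bm\beta_{0}(\tau)\neq\bm 0$ under Theorem \ref{thm1}, while $\bm y_{2,n}$ admits a density), the map $q_{n+1}$ is twice continuously differentiable on a neighbourhood of $\bm\theta_{\tau 0}$; its Hessian $\ddot q_{n+1}(\bm\theta)$ is supported on the $(b,\bm\beta)$-block, equals $S_{\textrm{Q}}^{\prime\prime}(h_{n+1}(\bm\theta))\,(1,\bm y_{2,n}^{\prime})^{\prime}(1,\bm y_{2,n}^{\prime})$ there, and hence obeys $\|\ddot q_{n+1}(\bm\theta)\|\le C|h_{n+1}(\bm\theta)|^{-3/2}\big(1+\sum_{j=1}^{p}y_{n+1-j}^{4}\big)$.

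The second step is to combine the two rates available for $\widehat{\bm\theta}_{\tau n}$ with a crude size bound on the data at the forecast origin. Theorem \ref{thm1} gives $\widehat{\bm\theta}_{\tau n}\to\bm\theta_{\tau 0}$ almost surely, and Theorem \ref{thm2} gives $n^{1/2}(\widehat{\bm\theta}_{\tau n}-\bm\theta_{\tau 0})=O_p(1)$. On the other hand, for any fixed measurable functional $W_{n}$ of $(y_{n},\dots,y_{n-p+1})$ that is finite almost surely, strict stationarity (Assumption \ref{assum2}) yields $W_{n}=o_p(a_{n})$ for every $a_{n}\to\infty$, since $P(W_{n}>\varepsilon a_{n})=P(W_{p}>\varepsilon a_{n})\to0$ for each $\varepsilon>0$; applying this with $W_{n}=\|\bm y_{2,n}\|$ and with $W_{n}=(1+\sum_{j=1}^{p}y_{n+1-j}^{4})|h_{n+1}(\bm\theta_{\tau 0})|^{-3/2}$ (finite a.s. by the first step, the moment $E|y_{t}|<\infty$ being available if a Markov-inequality version is preferred) gives $\|\bm y_{2,n}\|=o_p(n^{1/2})$ and $(1+\sum_{j}y_{n+1-j}^{4})|h_{n+1}(\bm\theta_{\tau 0})|^{-3/2}=o_p(n^{1/2})$. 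Consequently $\|\widehat{\bm\theta}_{\tau n}-\bm\theta_{\tau 0}\|\cdot\|(1,\bm y_{2,n}^{\prime})^{\prime}\|=O_p(n^{-1/2})\,o_p(n^{1/2})=o_p(1)$, so by affinity of $h_{n+1}(\cdot)$ and almost-sure positivity of $|h_{n+1}(\bm\theta_{\tau 0})|$ there is an event $E_{n}$ with $P(E_{n})\to1$ on which $\widehat{\bm\theta}_{\tau n}$ lies in the smooth neighbourhood and $|h_{n+1}(\bm\theta)|\ge\tfrac12|h_{n+1}(\bm\theta_{\tau 0})|$ for every $\bm\theta$ on the segment joining $\bm\theta_{\tau 0}$ and $\widehat{\bm\theta}_{\tau n}$.

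Finally, on $E_{n}$ the mean-value form of Taylor's theorem gives, for some $\bar{\bm\theta}$ on that segment,
\[
\widehat{Q}_{\tau}(y_{n+1}|\mathcal{F}_{n})-Q_{\tau}(y_{n+1}|\mathcal{F}_{n})-\dot q_{n+1}^{\prime}(\bm\theta_{\tau 0})(\widehat{\bm\theta}_{\tau n}-\bm\theta_{\tau 0})=\tfrac12(\widehat{\bm\theta}_{\tau n}-\bm\theta_{\tau 0})^{\prime}\ddot q_{n+1}(\bar{\bm\theta})(\widehat{\bm\theta}_{\tau n}-\bm\theta_{\tau 0}),
\]
and I would bound the right-hand side by pairing the factors as $\tfrac12[n^{1/2}(\widehat{\bm\theta}_{\tau n}-\bm\theta_{\tau 0})]^{\prime}[n^{-1/2}\ddot q_{n+1}(\bar{\bm\theta})][n^{1/2}(\widehat{\bm\theta}_{\tau n}-\bm\theta_{\tau 0})]$: the outer factors are $O_p(1)$ by Theorem \ref{thm2}, while on $E_{n}$ one has $n^{-1/2}\|\ddot q_{n+1}(\bar{\bm\theta})\|\le C\,n^{-1/2}|h_{n+1}(\bm\theta_{\tau 0})|^{-3/2}(1+\sum_{j}y_{n+1-j}^{4})=o_p(1)$ by the second step, so the remainder is $o_p(n^{-1/2})$; since $P(E_{n})\to1$, this proves the corollary. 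The only step beyond routine bookkeeping, and the main obstacle, is precisely this control of the Hessian: because $\ddot q_{n+1}$ blows up like $|h_{n+1}|^{-3/2}$ near the non-differentiability of $S_{\textrm{Q}}$ and also carries fourth-order powers of the data, one must show it stays $o_p(n^{1/2})$ at the intermediate point $\bar{\bm\theta}$, which forces the $n^{-1/2}$ estimation rate, the $o_p(n^{1/2})$ growth of stationary functionals of $\{y_t\}$, and the almost-sure non-vanishing of $h_{n+1}(\bm\theta_{\tau 0})$ to be played off against one another.
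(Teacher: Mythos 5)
Your proof is correct and follows the same skeleton as the paper's: write $\widehat{Q}_{\tau}(y_{n+1}|\mathcal{F}_{n})-Q_{\tau}(y_{n+1}|\mathcal{F}_{n})=q_{n+1}(\widehat{\bm\theta}_{\tau n})-q_{n+1}(\bm\theta_{\tau 0})$, Taylor-expand to second order about $\bm\theta_{\tau 0}$, keep the linear term $\dot{q}_{n+1}^{\prime}(\bm\theta_{\tau 0})(\widehat{\bm\theta}_{\tau n}-\bm\theta_{\tau 0})$, and absorb the quadratic remainder using the $\sqrt{n}$-rate from Theorem \ref{thm2}. Where you genuinely differ is in how the Hessian at the intermediate point is controlled. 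The paper invokes the standing restriction $\min\{|b(\tau)|,|\beta_1(\tau)|,\ldots,|\beta_p(\tau)|\}\geq C$ (so that $\dot{q}_t$ and $\ddot{q}_t$ are well defined) and then asserts in one line, using $E(|y_t|)<\infty$, that $\ddot{q}_{n+1}(\bm\theta_{\tau}^{*})=O_p(1)$, making the remainder $O_p(n^{-1})$. You instead prove only the weaker bound $\ddot{q}_{n+1}(\bar{\bm\theta})=o_p(n^{1/2})$, which still suffices for an $o_p(n^{-1/2})$ remainder, and you obtain it from strict stationarity (any fixed a.s.-finite functional of $(y_n,\ldots,y_{n-p+1})$ is $o_p(a_n)$ whenever $a_n\to\infty$) combined with the a.s. non-vanishing of $h_{n+1}(\bm\theta_{\tau 0})$ and a localization event $E_n$ on which $|h_{n+1}(\cdot)|$ along the segment stays above half its value at $\bm\theta_{\tau 0}$. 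This buys you something: you never actually use $E(|y_t|)<\infty$, and you do not need $|h_{n+1}(\bm\theta)|$ to be bounded away from zero over the parameter space, which the paper's $O_p(1)$ claim implicitly leans on through the nonzero-coefficient restriction; the cost is the extra bookkeeping with $E_n$ and the $o_p(n^{1/2})$ tightness step, which the paper's shorter argument avoids.
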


In practice, we may consider multiple quantile levels simultaneously, say $\tau_1<\tau_2<\cdots<\tau_K$. Although $\{\widehat{Q}_{\tau_k}(y_{n+1}|\mathcal{F}_{n})\}_{k=1}^K$ from the proposed procedure may not be monotonically increasing in $k$, it is convenient to employ the rearrangement method in \cite{Chernozhukov2010} to solve the quantile crossing problem after the estimation.

\begin{remark}\label{remark3-order selection}
In real applications, we do not know the order of $p$ at model \eqref{qdar}, and an information criterion is thus introduced.
At each quantile level of $\tau$, we may consider the following Bayesian information criterion (BIC),
\begin{equation*}\label{BICk}
\text{BIC}_{\tau}(p)=2(n-p_{\max})\log(L_n(\widehat{\bm\theta}_{\tau n}^{p})) + (2p+1)\log(n-p_{\max}),
\end{equation*}
where $p_{\max}$ is a predetermined positive integer, $\widehat{\bm\theta}_{\tau n}^{p}$ is the self-weighted CQE defined in \eqref{WCQE} at order $p$, and $L_n(\widehat{\bm\theta}_{\tau n}^p)=(n-p_{\max})^{-1}\sum_{t=p_{\max}+1}^{n}w_t\rho_{\tau}(y_t-q_t(\widehat{\bm\theta}_{\tau n}^p))$ with $w_t=(1+\sum_{i=1}^{p_{\max}}|y_{t-i}|^3)^{-1}$; see also \cite{machado1993robust} and \cite{Zhu_Zheng_Li2018}.
Note that the selected orders will depend on $\tau$, while we may need an order uniformly for all quantile levels. As a result, we suggest a combined version below,
\begin{equation*}\label{combinedBIC}
\text{BIC}(p)= \dfrac{1}{K}\sum_{k=1}^K\text{BIC}_{\tau_k}(p) =2(n-p_{\max}) \dfrac{1}{K}\sum_{k=1}^K \log(L_n(\widehat{\bm\theta}_{\tau n}^{p})) + (2p+1)\log(n-p_{\max}),
\end{equation*} 
where $\tau_k=k/(K+1)$ with $K$ being a fixed integer. Let $\widehat{p}_{n} = \arg\min_{1\leq p\leq p_{\max}}\text{BIC}(p)$ be the selected order. From simulation results in Section \ref{Section_simulation}, the proposed $\text{BIC}$ has satisfactory performance. 
Note that the likelihood function of model \eqref{qdar} or \eqref{qdarprocess} will involve the inverse function of $Q_{\tau}(y_t|\mathcal{F}_{t-1})$ with respect to $\tau$, which, however, has no close form.
It hence is not feasible to use the likelihood function to design an information criterion.
\end{remark}

\begin{remark}\label{remark2-model order}
It is of interest to consider a quantile double autoregression with orders different for the conditional location and scale parts,
\begin{equation*}
Q_{\tau}(y_t|\mathcal{F}_{t-1}) =\sum_{i=1}^{p_1}\phi_{i}(\tau)y_{t-i} +S_{\textrm{Q}}\left(b(\tau)+\sum_{j=1}^{p_2}\beta_{j}(\tau)y_{t-j}^{2}\right), 
\end{equation*}
for all $0<\tau<1$. The counterpart of model \eqref{qdarprocess} can be given similarly.
Let $p=\max\{p_1,p_2\}$ with $\phi_{i}(\cdot)=0$ for $i>p_1$ or $\beta_{j}(\cdot)=0$ for $j>p_2$.  Theorem \ref{thm0} and Corollary \ref{cor1} can then be used to establish the strict stationarity.
For the case with $p_1<p_2$, all theoretical results in this section still hold. However, when $p_1>p_2$, the conditional scale of $S_Q(b(u_t)+\sum_{j=1}^{p_2}\beta_{j}(u_t)y_{t-j}^{2})$ is not enough to reduce the moment requirement on $y_{t}$, and hence a higher order moment of $y_{t}$ will be needed for theoretical justification.
We prefer to the case with $p_1=p_2$, and leave the general setting for future research. 
\end{remark}

\section{Diagnostic checking for conditional quantiles}\label{Section_checking}

To check the adequacy of fitted conditional quantiles, we construct two portmanteau tests to detect possible misspecifications in the conditional location and scale, respectively, and a combined test is also considered.

Let $\eta_{t,\tau}=y_t-Q_{\tau}(y_t|\mathcal{F}_{t-1})=y_t-q_t(\bm\theta_{\tau 0})$ be the conditional quantile error. 
Instead of applying the QACF to quantile errors $\{\eta_{t,\tau}\}$ directly, we alternatively introduce the self-weighted QACF which naturally combines the QACF in \cite{Li_Li_Tsai2015} and the idea of self-weighting in \cite{Ling2005}. 
Specifically, the self-weighted QACF of $\{\eta_{t,\tau}\}$ at lag $k$ is defined as
\[
\rho_{k,\tau} =\textrm{qcor}_{\tau}\{\eta_{t,\tau},\eta_{t-k,\tau}\} =\dfrac{E[w_{t}\psi_{\tau}(\eta_{t,\tau})(\eta_{t-k,\tau}-\mu_{1,\tau})]} {\sqrt{(\tau-\tau^2)\sigma_{1,\tau}^2}},\quad k=1,2,\dots,
\]
where $\psi_{\tau}(x)=\tau-I(x<0)$, $\{w_{t}\}$ are random weights used in Section \ref{Section_estimation}, $\mu_{1,\tau}=E(\eta_{t,\tau})$ and $\sigma_{1,\tau}^2=\textrm{var}(\eta_{t,\tau})$. By replacing $\eta_{t-k,\tau}$ with $|\eta_{t-k,\tau}|$, a variant of $\rho_{k,\tau}$ can be defined as
\[
r_{k,\tau} =\textrm{qcor}_{\tau}\{\eta_{t,\tau},|\eta_{t-k,\tau}|\} =\dfrac{E[w_{t}\psi_{\tau}(\eta_{t,\tau})(|\eta_{t-k,\tau}|-\mu_{2,\tau})]} {\sqrt{(\tau-\tau^2)\sigma_{2,\tau}^2}},\quad k=1,2,\dots,
\]
where $\mu_{2,\tau}=E(|\eta_{t,\tau}|)$ and $\sigma_{2,\tau}^2=\textrm{var}(|\eta_{t,\tau}|)$.
Note that if $Q_{\tau}(y_t|\mathcal{F}_{t-1})$ is correctly specified by model \eqref{qdar}, then $\rho_{k,\tau}=0$ and $r_{k,\tau}=0$ for all $k\geq 1$. 

Accordingly, denote by $\{\widehat{\eta}_{t,\tau}\}$ the conditional quantile residuals, where $\widehat{\eta}_{t,\tau}=y_t-\widehat{Q}_{\tau}(y_t|\mathcal{F}_{t-1})=y_t-q_t(\widehat{\bm\theta}_{\tau n})$. The self-weighted residual QACFs at lag $k$ can then be defined as
\[
\widehat{\rho}_{k,\tau}=\dfrac{1}{\sqrt{(\tau-\tau^2)\widehat{\sigma}_{1,\tau}^2}}\dfrac{1}{n-p}\sum_{t=p+k+1}^n w_{t}\psi_{\tau}(\widehat{\eta}_{t,\tau})(\widehat{\eta}_{t-k,\tau}-\widehat{\mu}_{1,\tau})
\]
and
\[
\widehat{r}_{k,\tau}=\dfrac{1}{\sqrt{(\tau-\tau^2)\widehat{\sigma}_{2,\tau}^2}}\dfrac{1}{n-p}\sum_{t=p+k+1}^n w_{t}\psi_{\tau}(\widehat{\eta}_{t,\tau})(|\widehat{\eta}_{t-k,\tau}|-\widehat{\mu}_{2,\tau}),
\]
where $\widehat{\mu}_{1,\tau}=(n-p)^{-1}\sum_{t=p+1}^n\widehat{\eta}_{t,\tau}$, $\widehat{\mu}_{2,\tau}=(n-p)^{-1}\sum_{t=p+1}^n|\widehat{\eta}_{t,\tau}|$,  $\widehat{\sigma}_{1,\tau}^2=(n-p)^{-1}\sum_{t=p+1}^n(\widehat{\eta}_{t,\tau}-\widehat{\mu}_{1,\tau})^2$ and $\widehat{\sigma}_{2,\tau}^2=(n-p)^{-1}\sum_{t=p+1}^n(|\widehat{\eta}_{t,\tau}|-\widehat{\mu}_{2,\tau})^2$.
For a predetermined positive integer $K$, let $\widehat{\bm\rho}=(\widehat{\rho}_{1,\tau},\ldots,\widehat{\rho}_{K,\tau})^{\prime}$ and $\widehat{\bm r}=(\widehat{r}_{1,\tau},\ldots,\widehat{r}_{K,\tau})^{\prime}$. 
We first derive the asymptotic distribution of $\widehat{\bm\rho}$ and $\widehat{\bm r}$.

Denote $H_{1k}=E[w_{t}f_{t-1}(F_{t-1}^{-1}(\tau))\dot{q}_t(\bm\theta_{\tau 0})\eta_{t-k,\tau}]$ and $H_{2k}=E[w_{t}f_{t-1}(F_{t-1}^{-1}(\tau))\dot{q}_t(\bm\theta_{\tau 0})|\eta_{t-k,\tau}|]$. 
Let $\bm\epsilon_{1,t}=(\eta_{t,\tau},\ldots,\eta_{t-K+1,\tau})^{\prime}$ and   $\bm\epsilon_{2,t}=(|\eta_{t,\tau}|,\ldots,|\eta_{t-K+1,\tau}|)^{\prime}$. 
For $i=1$ and 2, denote the $K\times (2p+1)$ matrices $H_i(\tau)=(H_{i1},\ldots,H_{iK})^{\prime}$ and $M_i(\tau)=E[w_t^2\bm\epsilon_{i,t-1}\dot{q}_t^{\prime}(\bm\theta_{\tau 0})]$, and the $K\times K$ matrices $\Psi_i(\tau)=E(w_{t}^2\bm\epsilon_{i,t-1}\bm\epsilon_{i,t-1}^{\prime})$ and
\begin{equation*}\label{SeparateTestCov}
\Pi_i(\tau)=\sigma_{i,\tau}^{-2}[\Psi_i(\tau)+H_i(\tau)\Xi(\tau) H_i^{\prime}(\tau)-M_i(\tau)\Omega_1^{-1}(\tau)H_i^{\prime}(\tau)-H_i(\tau) \Omega_1^{-1}(\tau)M_i^{\prime}(\tau)],
\end{equation*}
where $\Xi(\tau)=\Omega_1^{-1}(\tau)\Omega_0(\tau)\Omega_1^{-1}(\tau)$. 
In addition, denote the $2K\times (2p+1)$ matrices $H(\tau)=(\sigma_{1,\tau}^{-1}H_1^{\prime}(\tau),\sigma_{2,\tau}^{-1}H_2^{\prime}(\tau))^{\prime}$ and $M(\tau)=E[w_t^2\bm\epsilon_{t-1}\dot{q}_t^{\prime}(\bm\theta_{\tau 0})]$, where $\bm\epsilon_{t}=(\sigma_{1,\tau}^{-1}\bm\epsilon_{1,t}^{\prime},\sigma_{2,\tau}^{-1}\bm\epsilon_{2,t}^{\prime})^{\prime}$. Define the $2K\times 2K$ matrices $\Psi(\tau)=E(w_{t}^2\bm\epsilon_{t-1}\bm\epsilon_{t-1}^{\prime})$ and 
	\begin{equation*}\label{JointTestCov}
	\Pi(\tau)= \Psi(\tau)+H(\tau)\Xi(\tau) H^{\prime}(\tau)-M(\tau)\Omega_1^{-1}(\tau)H^{\prime}(\tau)-H(\tau) \Omega_1^{-1}(\tau)M^{\prime}(\tau) :=\left(\begin{array}{cc}
	\Pi_1(\tau) & \Pi_3^{\prime}(\tau)\\
	\Pi_3(\tau) & \Pi_2(\tau)
	\end{array}\right).
	\end{equation*} 

\begin{thm}\label{thm3}
	Suppose that the conditions of Theorem \ref{thm2} hold and $E(y_t^2)<\infty$. It holds that $\sqrt{n}(\widehat{\bm \rho}^{\prime},\widehat{\bm r}^{\prime})^{\prime} \to_d N(0,\Pi(\tau))$ as $n\rightarrow\infty$.
\end{thm}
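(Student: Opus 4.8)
The plan is to reduce the joint statistic to a single stationary, ergodic martingale difference sum by (i) invoking the Bahadur‑type representation of the self‑weighted CQE established in the proof of Theorem~\ref{thm2}, $\sqrt{n}(\widehat{\bm\theta}_{\tau n}-\bm\theta_{\tau 0})=\Omega_1^{-1}(\tau)\,n^{-1/2}\sum_{t=p+1}^{n}w_t\dot q_t(\bm\theta_{\tau 0})\psi_\tau(\eta_{t,\tau})+o_p(1)$, and (ii) linearizing the residual quantile autocorrelations $\widehat\rho_{k,\tau}$ and $\widehat r_{k,\tau}$ around the unobserved quantile errors $\eta_{t,\tau}$. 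First I would dispose of the normalizing quantities: since $\widehat{\bm\theta}_{\tau n}\to\bm\theta_{\tau 0}$ almost surely by Theorem~\ref{thm1}, a Taylor expansion of $q_t$ combined with the ergodic theorem and $E(y_t^2)<\infty$ gives $\widehat\mu_{i,\tau}\to\mu_{i,\tau}$ and $\widehat\sigma_{i,\tau}^2\to\sigma_{i,\tau}^2$ almost surely for $i=1,2$, with both limits finite; hence it suffices to treat the numerators of $\widehat\rho_{k,\tau}$ and $\widehat r_{k,\tau}$ with $\widehat\sigma_{i,\tau}^2$ replaced by its limit.

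The core step is the linearization. Writing $\widehat\eta_{t,\tau}=\eta_{t,\tau}-[q_t(\widehat{\bm\theta}_{\tau n})-q_t(\bm\theta_{\tau 0})]$ and using that the hypothesis $\min\{|b(\tau)|,|\beta_1(\tau)|,\dots,|\beta_p(\tau)|\}\ge C$ keeps $h_t(\bm\theta_{\tau})$ bounded away from zero near $\bm\theta_{\tau 0}$ (so $\dot q_t$ and $\ddot q_t$ exist there), a second‑order expansion isolates the linear term $\dot q_t^{\prime}(\bm\theta_{\tau 0})(\widehat{\bm\theta}_{\tau n}-\bm\theta_{\tau 0})$. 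The non‑smooth factor $\psi_\tau(\widehat\eta_{t,\tau})$ then has to be handled by a uniform stochastic‑equicontinuity bound of the type
\begin{equation*}
\sup_{\|\bm u\|\le M}\Bigl| n^{-1/2}\!\!\sum_{t=p+1}^{n} w_t\bigl[\psi_\tau(\eta_{t,\tau}-n^{-1/2}\dot q_t^{\prime}\bm u)-\psi_\tau(\eta_{t,\tau})\bigr]z_{t-1}+n^{-1}\!\!\sum_{t=p+1}^{n} w_t f_{t-1}\bigl(F_{t-1}^{-1}(\tau)\bigr)\dot q_t^{\prime}\bm u\, z_{t-1}\Bigr|=o_p(1),
\end{equation*}
valid for any $\mathcal{F}_{t-1}$‑measurable stationary $\{z_{t-1}\}$ with $E[w_t\|\dot q_t\|\,|z_{t-1}|]<\infty$; I would prove it with the bracketing/chaining method of \cite{Pollard1985} already used for the $\sqrt n$‑consistency in Theorem~\ref{thm2}, invoking Assumption~\ref{assum4} for the smoothness of $f_{t-1}$ and the damping of the weights $w_t=(1+\sum_{i=1}^{p}|y_{t-i}|^3)^{-1}$ together with $E(y_t^2)<\infty$ for the needed moments of $w_t\|\dot q_t\|\,|z_{t-1}|$ (this is exactly where $E(y_t^2)<\infty$ enters). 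Applying this with $\bm u=\sqrt{n}(\widehat{\bm\theta}_{\tau n}-\bm\theta_{\tau 0})=O_p(1)$ and $z_{t-1}$ equal in turn to $\eta_{t-k,\tau}$, $|\eta_{t-k,\tau}|$ and the entries of $\dot q_{t-k}$, while noting that the quadratic Taylor remainder, the replacement of $\widehat\eta_{t-k,\tau}$ by $\eta_{t-k,\tau}$, and that of $\widehat\mu_{i,\tau}$ by $\mu_{i,\tau}$ each factor into a product of two $O_p(n^{-1/2})$ terms (because $n^{-1}\sum_t w_t\psi_\tau(\eta_{t,\tau})(\cdot)=O_p(n^{-1/2})$ by the martingale property), I would obtain
\begin{equation*}
\sqrt{n}\,\widehat\rho_{k,\tau}=\frac{1}{\sigma_{1,\tau}\sqrt{\tau(1-\tau)}}\Bigl[n^{-1/2}\!\!\sum_{t=p+1}^{n} w_t\psi_\tau(\eta_{t,\tau})\eta_{t-k,\tau}-H_{1k}^{\prime}\sqrt{n}(\widehat{\bm\theta}_{\tau n}-\bm\theta_{\tau 0})\Bigr]+o_p(1),
\end{equation*}
and the analogue for $\widehat r_{k,\tau}$ with $H_{2k}$, $|\eta_{t-k,\tau}|$ and $\sigma_{2,\tau}$, the centring constants $\mu_{i,\tau}$ turning out to be asymptotically negligible after the bookkeeping.

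Having these representations, I would stack over $k=1,\dots,K$ and substitute the Bahadur expansion of $\sqrt{n}(\widehat{\bm\theta}_{\tau n}-\bm\theta_{\tau 0})$, yielding $\sqrt{n}(\widehat{\bm\rho}^{\prime},\widehat{\bm r}^{\prime})^{\prime}=\mathcal L\,n^{-1/2}\sum_{t=p+1}^{n}\bm\xi_t+o_p(1)$ with $\mathcal L=\{\tau(1-\tau)\}^{-1/2}[\,I_{2K},\,-H(\tau)\Omega_1^{-1}(\tau)\,]$ and $\bm\xi_t=w_t\psi_\tau(\eta_{t,\tau})\bigl(\bm\epsilon_{t-1}^{\prime},\dot q_t^{\prime}(\bm\theta_{\tau 0})\bigr)^{\prime}$. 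Since $\bm\epsilon_{t-1}$, $\dot q_t(\bm\theta_{\tau 0})$ and $w_t$ are $\mathcal{F}_{t-1}$‑measurable and $E[\psi_\tau(\eta_{t,\tau})\mid\mathcal{F}_{t-1}]=0$ (because $q_t(\bm\theta_{\tau 0})=F_{t-1}^{-1}(\tau)$), $\{\bm\xi_t\}$ is a strictly stationary ergodic martingale difference sequence with $E(\bm\xi_t\bm\xi_t^{\prime})=\tau(1-\tau)$ times the block matrix having diagonal blocks $\Psi(\tau)$ and $\Omega_0(\tau)$ and off‑diagonal block $M(\tau)$, finiteness once more following from $E(y_t^2)<\infty$ and the weight choice. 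The martingale CLT then gives $n^{-1/2}\sum_t\bm\xi_t\to_d N(0,E(\bm\xi_t\bm\xi_t^{\prime}))$, and Slutsky's theorem yields $\sqrt{n}(\widehat{\bm\rho}^{\prime},\widehat{\bm r}^{\prime})^{\prime}\to_d N(0,\mathcal L\,E(\bm\xi_t\bm\xi_t^{\prime})\,\mathcal L^{\prime})$; a direct multiplication, using $\Xi(\tau)=\Omega_1^{-1}(\tau)\Omega_0(\tau)\Omega_1^{-1}(\tau)$, shows $\mathcal L\,E(\bm\xi_t\bm\xi_t^{\prime})\,\mathcal L^{\prime}=\Psi(\tau)+H(\tau)\Xi(\tau)H^{\prime}(\tau)-M(\tau)\Omega_1^{-1}(\tau)H^{\prime}(\tau)-H(\tau)\Omega_1^{-1}(\tau)M^{\prime}(\tau)=\Pi(\tau)$, which is the claimed limit.

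The main obstacle I anticipate is precisely the uniform stochastic‑equicontinuity bound in the second paragraph: the statistics involve the indicator $\psi_\tau$ evaluated at the estimated residuals, so no Taylor expansion is available, the objective is both non‑smooth and non‑convex, and the self‑weighting leaves only $E(y_t^2)<\infty$ to work with rather than the higher moments a Donsker‑type argument would want. Controlling the fluctuations of the weighted empirical process indexed by the drift $\bm u$ within that moment budget — essentially rerunning the delicate part of the proof of Theorem~\ref{thm2} via \cite{Pollard1985}'s bracketing — is where the real effort lies; the remaining steps are bookkeeping, the ergodic/martingale CLT, and a matrix identity.
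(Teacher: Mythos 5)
Your overall route is the same as the paper's: show $\widehat{\mu}_{i,\tau}$ and $\widehat{\sigma}_{i,\tau}^2$ are consistent (this is where $E(y_t^2)<\infty$ enters), linearize the numerators of $\widehat{\rho}_{k,\tau}$ and $\widehat{r}_{k,\tau}$ around the true errors via a Pollard-type uniform stochastic-equicontinuity argument, substitute the Bahadur representation of $\sqrt{n}(\widehat{\bm\theta}_{\tau n}-\bm\theta_{\tau 0})$ from the proof of Theorem \ref{thm2}, and finish with the martingale CLT, the Cram\'er--Wold device and the covariance algebra giving $\Pi(\tau)$. The asymptotic representation you write for $\sqrt{n}\,\widehat{\rho}_{k,\tau}$ is exactly the one the paper derives (there via the decomposition $A_{n1}+A_{n2}+A_{n3}$ and a finite-covering/bracketing bound for each piece), so the machinery is sound.

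There is, however, one concrete gap: your disposal of the centring constants. Your "product of two $O_p(n^{-1/2})$ terms" argument only covers replacing $\widehat{\mu}_{1,\tau}$ by $\mu_{1,\tau}$; it does not remove the remaining term $-\mu_{1,\tau}\, n^{-1/2}\sum_{t} w_t\psi_{\tau}(\widehat{\eta}_{t,\tau})$. The martingale property gives nothing here because the sum is evaluated at the estimated residuals: by your own equicontinuity lemma with $z_{t-1}=1$ plus the Bahadur expansion, $n^{-1/2}\sum_t w_t\psi_{\tau}(\widehat{\eta}_{t,\tau})$ converges to a nondegenerate normal, i.e.\ it is $O_p(1)$, not $o_p(1)$. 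If this term is kept, the influence function effectively involves $\eta_{t-k,\tau}-\mu_{1,\tau}$ (and $|\eta_{t-k,\tau}|-\mu_{2,\tau}$), and the limit covariance would differ from the stated $\Pi(\tau)$, which is built from the \emph{uncentred} quantities $\Psi_i(\tau)$, $H_{ik}$ and $M_i(\tau)$. The paper kills this term with a different ingredient that your proposal never invokes: the subgradient (first-order) condition of the weighted quantile fit, used there in the form $|\sum_{t=p+1}^n\psi_{\tau}(\widehat{\eta}_{t,\tau})|<1$, which bounds the residual sign sum so that the centring contribution is $O_p(n^{-1/2})$ before the $\sqrt{n}$ scaling. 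Without an argument of this kind your bookkeeping does not close, so this step needs to be added; the rest of the proposal matches the paper's proof.
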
 

The finite second moment of $y_t$ is required in Theorem \ref{thm3} since this condition is needed to show the consistency of $\widehat{\sigma}_{i,\tau}^2$ for $i=1$ and 2. We have tried many other approaches, such as transforming the residuals by a bounded and strictly increasing function \citep{Zhu_Zheng_Li2018} and then applying the self-weighted QACF to the transformed residuals, however, the condition of $E(y_t^2)<\infty$ is still unavoidable.  
	As a comparison, if we alternatively use the unweighted QACFs of residuals with $w_t=1$ for all $t$, then the condition of $E(|y_t|^3)<\infty$ is unavoidable for the asymptotic normality. 

As for estimating $\Sigma(\tau)$ in Section \ref{Section_estimation}, we can employ the difference quotient method to estimate the quantity of $f_{t-1}(F_{t-1}^{-1}(\tau))$, and then approximate the expectations in  $\Pi(\tau)$ by sample averages with $\eta_{t,\tau}$ being replaced by $\widehat{\eta}_{t,\tau}$. Consequently, a consistent estimator, denoted by $\widehat{\Pi}(\tau)$, for the asymptotic covariance in Theorem \ref{thm3} can be constructed. 
We then can check the significance of $\widehat{\rho}_{k,\tau}$'s and $\widehat{r}_{k,\tau}$'s individually by establishing their confidence intervals based on $\widehat{\Pi}(\tau)$.

Let $\bm z=(\bm z_1^{\prime},\bm z_2^{\prime})^{\prime}\in \mathbb{R}^{2K}$ be a multivariate normal random vector with zero mean vector and covariance matrix $\Pi(\tau)$, where $\bm z_1, \bm z_2\in \mathbb{R}^{K}$.
To check the first $K$ lags of $\widehat{\rho}_{k,\tau}$'s (or $\widehat{r}_{k,\tau}$'s) jointly, by Theorem \ref{thm3}, the Box-Pierce type test statistics can be designed below,
\[
Q_1(K)=n\sum_{k=1}^K\widehat{\rho}_{k,\tau}^2 \to_d \bm z_1^{\prime}\bm z_1 \;\;\text{and}\;\;
Q_2(K)=n\sum_{k=1}^K\widehat{r}_{k,\tau}^2 \to_d \bm z_2^{\prime}\bm z_2 ,
\]
as $n\to\infty$.
It is also of interest to consider a combined test, $Q(K)=Q_1(K)+Q_2(K) \to_d \bm z^{\prime}\bm z$ as $n\to\infty$.
To calculate the critical value or $p$-value of the three tests, we generate a sequence of, say $B=10000$, multivariate random vectors with the same distribution of $\bm z$, and then use the empirical distributions to approximate the corresponding null distributions.
As expected, the simulation results in Section \ref{Section_simulation} suggest that $Q_1(K)$ is more powerful in detecting the misspecification in the conditional location, $Q_2(K)$ has a better performance in detecting the misspecification in the conditional scale, while $Q(K)$ has a performance between those of $Q_1(K)$ and $Q_2(K)$. 
Therefore, we may first use $Q(K)$ to check the overall adequacy of the fitted conditional quantiles, and then employ $Q_1(K)$ and $Q_2(K)$ to look for more details; see also \cite{Li_Li2008b} and \cite{Zhu_Zheng_Li2018}. 

\begin{remark}
There are two advantages to introduce the self-weights $\{w_{t}\}$ into the QACF. First, in line with the estimating procedure in Section \ref{Section_estimation}, the self-weights can reduce the moment restriction on $y_t$ in establishing the asymptotic normality of $\widehat{\rho}_{k,\tau}$ and $\widehat{r}_{k,\tau}$ (see Theorem \ref{thm3}) and thus makes the diagnostic tools applicable for heavy-tailed data. Secondly, as validated by the simulation results in Section \ref{Section_simulation}, the self-weighted residual QACFs $\widehat{\rho}_{k,\tau}$ and $\widehat{r}_{k,\tau}$ are more efficient than their unweighted counterparts with $w_t=1$ for all $t$. In contrast to the efficiency loss in estimation due to the self-weights, interestingly, self-weights can improve the efficiency of QACFs in terms of standard deviations.
\end{remark}

\section{Simulation studies}\label{Section_simulation}

\subsection{Self-weighted CQE and model selection}

The first experiment is to evaluate the self-weighted CQE $\widehat{\bm\theta}_{\tau n}$ in Section \ref{Section_estimation}. The data generating process is
\begin{align}\label{sim1}
y_t=& \phi(u_t)y_{t-1}+S_{\textrm{Q}}(b(u_t)+\beta(u_t) y_{t-1}^2),
\end{align}
where $\{u_t\}$ are $i.i.d.$ standard uniform random variables, and we consider two sets of coefficient functions below, 
\begin{align} \label{sim1coef1}
\phi(\tau)=-0.2, \; b(\tau)=S_{\textrm{Q}}^{-1}(F_b^{-1}(\tau)) \hspace{2mm}\text{and}\hspace{2mm} \beta(\tau)=0.4b(\tau)
\end{align}
and
\begin{align}\label{sim1coef2}
\phi(\tau)=0.5\tau, \; b(\tau)=S_{\textrm{Q}}^{-1}(F_b^{-1}(\tau)) \hspace{2mm}\text{and}\hspace{2mm} \beta(\tau)=0.5\tau b(\tau),
\end{align}
where $S_{\textrm{Q}}^{-1}(\tau)=\tau^2\sgn(\tau)$ is the inverse function of $S_{\textrm{Q}}(\cdot)$, and $F_b(\cdot)$ is the distribution function of the standard normal, the Student's $t_5$ or the Student's $t_3$ random variable.
It holds that $S_{\textrm{Q}}(b(u_t)+\beta(u_t) y_{t-1}^2)=S_{\textrm{Q}}(b(u_t))\sqrt{1+ cy_{t-1}^2}$ when $\beta(\tau)=cb(\tau)$ with $c>0$ being a constant. Let $\varepsilon_t=S_{\textrm{Q}}(b(u_t))$, and then the coefficient functions at \eqref{sim1coef1} and \eqref{sim1coef2} correspond to
\[
y_t= -0.2y_{t-1}+\varepsilon_t \sqrt{1+ 0.4y_{t-1}^2} \hspace{5mm}\text{and}\hspace{5mm} y_t= 0.5u_ty_{t-1}+\varepsilon_t \sqrt{1+ 0.5u_ty_{t-1}^2},
\]
respectively.
We consider two sample sizes, $n=500$ and 1000, and there are 1000 replications for each sample size. 
The self-weighted CQE in \eqref{WCQE} is employed to fit the data, and the quasi-Newton algorithm can be used to obtain the estimate for each replication. 
To estimate the asymptotic standard deviation (ASD) of $\widehat{\bm\theta}_{\tau n}$, two bandwidths, $h_B$ and $h_{HS}$ at \eqref{bandwidths}, are used to estimate $f_{t-1}(F_{t-1}^{-1}(\tau))$ in the difference quotient method, and the resulting ASDs are denoted as ASD$_1$ and ASD$_2$, respectively.

Tables \ref{table1a} and \ref{table1b} present the bias, empirical standard deviation (ESD) and ASD of $\widehat{\bm\theta}_{\tau n}$ at quantile level $\tau=0.05$ or $0.25$ for settings \eqref{sim1coef1} and \eqref{sim1coef2}, respectively, and the corresponding values of $\bm\theta_{\tau 0}$ are also given. 
We only report the results for the standard normal and the Student's $t_5$ cases, and that for the Student's $t_3$ case are relegated to the supplementary material to save space. 
It can be seen that, as the sample size increases, the biases, ESDs and ASDs decrease, and the ESDs get closer to their corresponding ASDs.
Moreover, the biases, ESDs and ASDs get smaller as the quantile level increases from $\tau=0.05$ to $\tau=0.25$. It is obvious since there are more observations as the quantile level gets near to the center, although the true values of $b(\tau)$ and $\beta(\tau)$ also become smaller as $\tau$ approaches $0.5$.
Finally, most biases, ESDs and ASDs increase as the distribution of $F_b(\cdot)$ gets heavy-tailed, and the bandwidth $h_{HS}$ slightly outperforms $h_{B}$.

The second experiment is to evaluate the proposed BIC in Remark \ref{remark3-order selection}. The data generating process is
\begin{align}\label{sim3}
y_t=& \phi_1(u_t)y_{t-1}+\phi_2(u_t)y_{t-2}+S_{\textrm{Q}}(b(u_t)+\beta_1(u_t) y_{t-1}^2+\beta_2(u_t) y_{t-2}^2)
\end{align}
with three sets of coefficient functions,
\begin{align*} 
(i)&~\phi_1(\tau)=0.1, \; \phi_2(\tau)=0.3, \; b(\tau)=S_{\textrm{Q}}^{-1}(F_b^{-1}(\tau)), \; \beta_1(\tau)=0.1b(\tau) \;\text{and}\; \beta_2(\tau)=0.4b(\tau), \\
(ii)&~\phi_1(\tau)=0.1\tau, \; \phi_2(\tau)=0.3, \; b(\tau)=S_{\textrm{Q}}^{-1}(F_b^{-1}(\tau)), \; \beta_1(\tau)=0.1\tau b(\tau) \;\text{and}\; \beta_2(\tau)=0.4b(\tau), \\
(iii)&~\phi_1(\tau)=0.1\tau, \; \phi_2(\tau)=0.3, \; b(\tau)=S_{\textrm{Q}}^{-1}(F_b^{-1}(\tau)), \; \beta_1(\tau)=0.1\tau b(\tau) \;\text{and}\; \beta_2(\tau)=0.4\tau b(\tau),
\end{align*}
where the other settings are preserved as in the first experiment. 
The proposed BIC is used to select the order $p$ with $p_{\max}=5$. Since the true order is two, the cases of underfitting, correct selection and overfitting correspond to $\widehat{p}_{n}$ being 1, 2 and greater than 2, respectively. 
Table \ref{table-bic} gives the percentages of underfitted, correctly selected and overfitted models. It can be seen that the BIC performs well in general. It becomes better when the sample size increases, while  gets slightly worse as the distribution of $F_b(\cdot)$ gets more heavy-tailed.

\subsection{Three portmanteau tests}

The third experiment considers the self-weighted residual QACFs, $\widehat{\rho}_{k,\tau}$ and $\widehat{r}_{k,\tau}$, and the approximation of their asymptotic distributions.
The number of lags is set to $K=6$, and all the other settings are the same as in the first experiment.
For model \eqref{sim1} with coefficient functions at \eqref{sim1coef1}, Tables \ref{table2a} and \ref{table2c} give the biases, ESDs and ASDs of $\widehat{\rho}_{k,\tau}$ and $\widehat{r}_{k,\tau}$, respectively, with lags $k=2, 4$ and 6. 
The results for Student's $t_3$ case are relegated to the supplementary material to save space.  
We have four findings for both residual QACFs $\widehat{\rho}_{k,\tau}$ and $\widehat{r}_{k,\tau}$: (1) the biases, ESDs and ASDs decrease as the sample size increases or as the quantile level increases from $\tau=0.05$ to $0.25$; (2) the ESDs and ASDs are very close to each other; (3) the biases, ESDs and ASDs get smaller as $F_b(\cdot)$ becomes heavy-tailed; (4) two bandwidths $h_{HS}$ and $h_{B}$ perform similarly.
Similar observations can be found for coefficient functions at \eqref{sim1coef2}, and the results are relegated to the supplmentary material to save space.

The fourth experiment is to evaluate the size and power of three portmanteau tests in Section \ref{Section_checking}, and the data generating process is
\begin{align}\label{sim2coef1}
y_t=& c_1y_{t-2}+S_{\textrm{Q}}\left(b(u_t)+0.1b(u_t)y_{t-1}^2+c_2b(u_t)y_{t-2}^2\right)
\end{align}
with $b(\cdot)$ being defined as in previous experiments, while a quantile double AR model with order one is fitted to the generated sequences. As a result, the case of $c_1=c_2=0$ corresponds to the size, the case of $c_1\neq 0$ to the misspecification in the conditional location, and the case of $c_2\neq 0$ to the misspecification in the conditional scale.
Two departure levels, 0.1 and 0.3, are considered for both $c_1$ and $c_2$, and we calculate the critical values by generating $B=10000$ random vectors.
Table \ref{table3a} gives the rejection rates of $Q_{1}(6)$, $Q_{2}(6)$ and $Q(6)$, where the critical values of three tests are calculated based on estimated covariance matrix $\widehat{\Pi}(\tau)$ using the bandwidth $h_{HS}$. 
It can be seen that the size gets closer to the nominal rate as the sample size $n$ increases to 1000 or the quantile level $\tau$ increases to 0.25, and almost all powers increase as the sample size or departure level increases; 
Moreover, in general, $Q_{1}(K)$ is more powerful than $Q_{2}(K)$ in detecting the misspecification in the conditional location, $Q_{2}(K)$ is more powerful in detecting the misspecification in the conditional scale, and $Q(K)$ compromises between $Q_{1}(K)$ and $Q_{2}(K)$. 
Finally, when the data are more heavy-tailed, $Q_{1}(K)$ becomes less powerful in detecting the misspecification in the conditional location, while $Q_{2}(K)$ is more powerful in detecting the misspecification in the conditional scale.
This may be due to the mixture of two effects: the worse performance of the estimation and the larger value of $|c_2b(\tau)|$ in the conditional scale.
We also calculate the rejection rates of three tests using the bandwidth $h_{B}$ and another data generating process, and similar findings can be observed; see the supplmentary material for details.

\subsection{Random weights}

The last experiment is to compare the proposed inference tools with and without random weights in term of efficiency.
Note that $\widehat{\bm\theta}_{\tau n}$, $\widehat{\rho}_{k,\tau}$ and $\widehat{r}_{k,\tau}$ are the self-weighted CQE and two self-weighted QACFs, respectively, and we denote their unweighted counterparts by $\widetilde{\bm\theta}_{\tau n}$, $\widetilde{\rho}_{k,\tau}$ and $\widetilde{r}_{k,\tau}$.

The data are generated from the process \eqref{sim1}, and two sets of coefficient functions are considered: 
\begin{align*}
&(a)~\phi(\tau)=0.5\tau,\; b(\tau)=S_Q^{-1}(F_b^{-1}(\tau)),\; \beta(\tau)=0.5\tau b(\tau), \\
&(b)~\phi(\tau)=0.5\tau, \; b(\tau)=S_Q^{-1}(F_b^{-1}(\tau)),\; \beta(\tau)=0.8(\tau-0.5),
\end{align*}  
where $F_b(\cdot)$ is defined in the first experiment, and Set (a) is the same as in \eqref{sim1coef2}.
We generate data of size 2000 with 1000 replications, and fit a quantile double AR model with order one to each replication using weighted and unweighted CQE at quantile levels $\tau=0.05$ and 0.25. 
Figure \ref{BoxPlotEstimator} presents the box plots for the weighted and unweighted CQE estimators, and Figures \ref{BoxPlotRho} and \ref{BoxPlotR} give the box plots for residual QACFs of weighted and unweighted versions. 

On one hand, the interquartile range of the weighted estimator $\widehat{\bm\theta}_{\tau n}$ is larger than that of the unweighted counterpart $\widetilde{\bm\theta}_{\tau n}$, 
however, the efficiency loss due to the random weights seems smaller as $\tau$ increases from 0.05 to 0.25 or the distribution of $F_b(\tau)$ gets less heavy-tailed.   
On the other hand, the interquartile range of the weighted residual QACFs $\widehat{\rho}_{k,\tau}$ (or $\widehat{r}_{k,\tau}$) is smaller than that of the unweighted counterpart $\widetilde{\rho}_{k,\tau}$ (or $\widetilde{r}_{k,\tau}$), and the efficiency gain of residual QACFs owing to the random weights gets larger as the distribution of $F_b(\tau)$ becomes more heavy-tailed.   
In other words, the random weights play the opposite role for estimators and residual QACFs, i.e., the weighted estimator is less efficient while the weighted residual QACFs are more efficient in finite samples. 
The case with the sample size of 1000 is similar, and its results can be found in the supplementary material.

In sum, both the self-weighted CQE and diagnostic tools can be used to handle the heavy-tailed time series by introducing self-weights at the sacrifice of efficiency in estimation, but the weights can lead to efficiency gain for residual QACFs. In terms of portmanteau tests, the combined test $Q(K)$ is preferred to check the overall adequacy of fitted conditional quantiles, while the tests $Q_1(K)$ and $Q_2(K)$ can be used to detect the possible misspecification in the conditional location and scale, respectively.   
For the selection of bandwidth in estimating the quantity of $f_{t-1}(F_{t-1}^{-1}(\tau))$, 
although $h_{HS}$ and $h_{B}$ perform very similarly in diagnostic checking, we recommend $h_{HS}$ since it has more stable performance in estimation. Therefore, $h_{HS}$ is used to estimate the covariance matrices of the self-weighted estimator and residual QACFs in the next section.

\section{An empirical example}\label{Section_realdata}

This section analyzes the weekly closing prices of S\&P500 Index from January 10, 1997 to December 30, 2016. Figure \ref{TimePlotVaR} gives the time plot of log returns in percentage, denoted by $\{y_t\}$, and there are 1043 observations in total. The summary statistics for $\{y_t\}$ are listed in Table \ref{RealDateTab1}, and it can be seen that the data is negatively skewed and heavy-tailed.
\begin{table}[htb]
	\begin{center}
		\caption{\label{RealDateTab1}Summary statistics for log returns}\vspace{1mm}	
		\begin{tabular}{ccccccc}
			\hline\hline
			Min & Max & Mean & Median & Std. Dev. & Skewness & Kurtosis \\
			\hline	
			-20.189 & 11.251 & 0.000 & 0.097 & 2.488 & -0.746 & 6.119 \\
			\hline
		\end{tabular}	
	\end{center}
\end{table}

The autocorrelation can be found from the sample autocorrelation functions (ACFs) of both $\{y_t\}$ and $\{y_t^2\}$. We then consider a double AR model,
\begin{align}\label{fittedDAR}
y_t=& -0.091_{0.040}y_{t-1}+0.051_{0.036}y_{t-2}-0.013_{0.037}y_{t-3} \nonumber \\
&+\varepsilon_t\sqrt{2.583_{0.193}+0.255_{0.031}y_{t-1}^2+0.132_{0.039}y_{t-2}^2+0.197_{0.035}y_{t-3}^2},
\end{align}
where the Gaussian QMLE is employed, standard errors are given in the corresponding subscripts, and the order is selected by the Bayesian information criterion (BIC) with $p_{\textrm{max}}=10$.
It can be seen that, at the 5\% significance level, all fitted coefficients in the conditional mean are insignificant or marginally significant, while those in the conditional variance are significant.

We apply the quantile double AR model to the sequence, and the quantile levels are set to $\tau_k=k/20$ with $1\leq k\leq 19$.
The proposed BIC in Section \ref{Section_estimation} is employed with $p_{\textrm{max}}=10$, and the selected order is $\widehat{p}_n=3$.
The estimates of $\phi_i(\tau)$ for $1\leq i\leq 3$, together with their 95\% confidence bands, are plotted against the quantile level in Figure \ref{IntroFunction}, and those of $\phi_i$ in the fitted double AR model \eqref{fittedDAR} are also given for the sake of comparison.
The confidence bands of $\phi_i(\tau)$ and $\phi_i$ at lags 1 and 2 are not overlapped at the quantile levels around $\tau=0.8$, while those at lag 3 are significantly separated from each other around $\tau=0.2$.
We may conclude the $\tau$-dependence of $\phi_i(\tau)$'s, and the commonly used double AR model is limited in interpreting such type of financial time series.

We next attempt to compare the fitted coefficients in the conditional scale of our model with those of model \eqref{fittedDAR}.
Note that, from Remark \ref{remark1-special cases}, the quantity of $\beta_j(\tau)/b(\tau)$ for each $1\leq j\leq p$ in the quantile double AR model corresponds to $\beta_j/\omega$ in the double AR model.
Moreover, when the quantile level $\tau$ is near to 0.5, the estimate of $b(\tau)$ is very small, and this makes the value of $\widehat{\beta}_j(\tau)/\widehat{b}(\tau)$ abnormally large; see also Remark \ref{add1}.
As a result, Figure \ref{FunctionPlots} plots the fitted values of ${\beta}_j(\tau)/{b}(\tau)$ for $1\leq j\leq 3$, together with their 95\% confidence bands, against the quantile level with $|\tau-0.5|\geq 0.15$, and the fitted values of $\beta_j/\omega$ from model \eqref{fittedDAR} are also reported.
The confidence bands of $\beta_1(\tau)/b(\tau)$ and $\beta_1/\omega$ are separated from each other for the quantile levels around $\tau=0.25$.
Note that, for the double AR model \eqref{darquantile}, the coefficients in the conditional scale include $S_Q(b_{\tau}\beta_j)$ with $1\leq j\leq p$, which already depend on the quantile level.
We may argue that it is still not flexible enough in interpreting this series.

Since 5\% VaR is usually of interest for the practitioner, we give more details about the fitted conditional quantile at $\tau=0.05$,
\begin{align*}
\widehat{Q}_{0.05}(y_t|\mathcal{F}_{t-1}) =& 0.091_{0.198}y_{t-1}+0.379_{0.135}y_{t-2}+0.260_{0.139}y_{t-3} \nonumber \\
&+S_{\textrm{Q}}\left(-6.951_{1.773}-0.261_{0.698}y_{t-1}^2-0.367_{0.413}y_{t-2}^2-1.346_{0.501}y_{t-3}^2\right),
\end{align*}
where standard errors are given in the corresponding subscripts of the estimated coefficients.
Figure \ref{RhoRCI} plots the residual QACFs $\widehat{\rho}_{k,\tau}$ and $\widehat{r}_{k,\tau}$, and they slightly stand out the 95\% confidence bands only at lags 1 and 4.
The $p$-values of $Q_1(K)$, $Q_2(K)$ and $Q(K)$ are all larger than 0.717 for $K=10$, 20 and 30.
We may conclude that both residual QACFs are insignificant both individually and jointly, and the fitted conditional quantile is adequate.

We consider the one-step ahead conditional quantile prediction at level $\tau=0.05$, which is the negative values of 5\% VaR forecast, and a rolling forecasting procedure is performed.
Specifically, we begin with the forecast origin $n=501$, which corresponds to the date of August 11, 2006, and obtain the estimated coefficients of the quantile double AR model with order three and using the data from the beginning to the forecast origin (exclusive). For each fitted model, we calculate the one-step ahead conditional quantile prediction for the next trading week by  $\widehat{Q}_{0.05}(y_{n}|\mathcal{F}_{n-1})=\sum_{i=1}^3\widehat{\phi}_i(0.05)y_{n-i} +S_{\textrm{Q}}\left(\widehat{b}(0.05)+\sum_{j=1}^3\widehat{\beta}_j(0.05)y_{n-j}^2\right)$.
Then we advance the forecast origin by one and repeat the previous estimation and prediction procedure until all data are utilized.
These predicted values are displayed against the time plot in Figure \ref{TimePlotVaR}.
The magnitudes of VaRs become larger as the return becomes more volatile, and the returns fall below their calculated negative 5\% VaRs occasionally.

In addition, we compare the forecasting performance of the proposed quantile double AR (QDAR) model with two commonly used models in the literature: the quantile AR (QAR) model in \cite{Koenker_Xiao2006} and the two-regime threshold quantile AR (TQAR) model in \citep{galvao2011threshold}.
The order is fixed at three for all models. In estimating TQAR models, the delay parameter $d$ is searched among $\{1,2,3\}$, and the threshold parameter $r$ is searched among a compact grid with the empirical percentiles of $y_t$ from the $10$th quantile to the $90$th quantile.
The rolling forecasting procedure is employed again, and we consider VaRs at four levels, $\tau=5\%$, 10\%, 90\% and 95\%.
To evaluate the forecasting performance, the empirical coverage rate (ECR) is calculated as the percentage of observations that fall below the corresponding conditional quantile forecast for the last 543 data points.  
Besides the ECR, we also conducts two VaR backtests: the likelihood ratio test for correct conditional coverage (CC) in \cite{christoffersen1998evaluating} and the dynamic quantile (DQ) test in \cite{Engle2004}.
Specifically, the null hypothesis of CC test is that, conditional on $\mathcal{F}_{t-1}$, $\{H_t\}$ are $i.i.d.$ Bernoulli random variables with success probability being $\tau$, where the hit $H_t=I(y_t<Q_{\tau}(y_t | \mathcal{F}_{t-1}))$.
For the DQ test, following \cite{Engle2004}, we regress $H_t$ on a constant, four lagged hits $H_{t-\ell},\ell=1,2,3,4$, and the contemporaneous VaR forecast. The null hypothesis of DQ test is that the intercept is equal to the quantile level $\tau$ and four regression coefficients are zero. If the null hypothesis of each VaR backtest cannot be rejected, it then indicates that the VaR forecasts are satisfactory.

Table \ref{table_real_data_forecast} gives ECRs and $p$-values of two VaR backtests for the one-step-ahead forecasts by three fitted models. 
It can be seen that the ECRs of three models are all close to the corresponding nominal levels.
On the other hand, in terms of two backtests, the QDAR model performs satisfactorily at four quantile levels with all $p$-values greater than $0.1$. However, the QAR model performs poorly at all levels, and the TQAR model only performs well at the 90\% quantile level. 
This may be due to the fact that both QAR and TQAR model ignore the conditional heteroscedastic structure in stock prices.

In sum, it is necessary to consider the quantile double AR model to interpret these stock indices, and the proposed inference tools can also provide reliable results.

\section{Conclusion and discussion}\label{Section_conclusion}

This paper proposes a new conditional heteroscedastic model, which has varying structures at different quantile levels, and its necessity is illustrated by analyzing the weekly S\&P500 Index.
A simple but ingenious transformation $S_Q(\cdot)$ is introduced to the conditional scale, and this makes the AR coefficients free from nonnegative constraints.
The strict stationarity of the new model is derived, and inference tools, including a self-weighted CQE and self-weighted residual QACFs-based portmanteau tests, are also constructed. 

Our model can be extended in two directions. First, as far as we know, this is the first quantile conditional heteroscedastic model in the literature, and it is certainly of interest to consider other types of quantile conditional heteroscedastic time series models \citep{Francq_Zakoian2010}.
Moreover, based on the transformation of $S_Q(\cdot)$, the proposed quantile double AR model \eqref{qdarprocess} has a seemingly linear structure. As a result, we may consider a multivariate quantile double AR model, say with order one,
\[
y_{lt}=\sum_{m=1}^N\phi_{lm}(u_{lt})y_{m,t-1} + S_{\textrm{Q}}\left(b_l(u_{lt})+\sum_{m=1}^N\beta_{lm}(u_{lt})y_{m,t-1}^{2}\right), \hspace{5mm}1\leq l\leq N,
\]
where $\bm{y}_t=(y_{1t},...,y_{Nt})^{\prime}$ is an $N$-dimensional time series, $\bm{u}_t=(u_{1t},...,u_{Nt})^{\prime}$, $\bm\phi(\bm{u}_t)=(\phi_{lm}(u_{lt}))$ and $\bm\beta(\bm{u}_t)=(\beta_{lm}(u_{lt}))$ are $N\times N$ coefficient matrices, $\bm{b}(\bm{u}_t)=(b_1(u_{1t}),...,b_N(u_{Nt}))^{\prime}$, and the marginal distributions of $\bm{u}_t$ are all standard uniform; see \cite{Tsay2014}.
It may be even possible to be able to handle high-dimensional time series, when the coefficient matrices $\bm\phi(\bm{u}_t)$ and $\bm\beta(\bm{u}_t)$ are sparsed or have a low-rank structure \citep{zhu2017network}.
We leave it for future research.

\section*{Supplementary Material}
The supplementary material provides additional simulation results. 

\section*{Acknowledgment}
We are grateful to the co-editor, associate editor and three anonymous referees for their valuable comments and suggestions, which led to substantial improvement of this article.
Zhu's research was supported by a NSFC grant 12001355, Shanghai Pujiang Program 2019PJC051 and Shanghai Chenguang Program 19CG44. Li's research was partially supported by a Hong Kong RGC grant 17304617 and a NSFC grant 72033002.

\renewcommand{\thesection}{A}
\setcounter{equation}{0} 
\section*{Appendix: Technical proofs}

This appendix gives the technical proofs of Theorems \ref{thm0}-\ref{thm3} and Corollary \ref{cor3}. Moreover, Lemmas \ref{lem1}-\ref{lem2} with proofs are also included, and they provide some preliminary results for proving Theorem \ref{thm2}. 
Throughout the appendix, the notation $C$ is a generic constant which may take different values from lines to lines.
The norm of a matrix or column vector is defined as $\|A\|=\sqrt{\text{tr}(AA^\prime)}=\sqrt{\sum_{i,j}|a_{ij}|^2}$.

\begin{proof}[Proof of Theorem \ref{thm0}]	
	Let $\bm y_t=(y_t, y_{t-1}, \ldots, y_{t-p+1})^{\prime}$, $\mathcal{B}^p$ be the class of Borel sets of $\mathbb{R}^p$, and $\nu_p$ be the Lebesgue measure on $(\mathbb{R}^p, \mathcal{B}^p)$.
	Denote by $m: \mathbb{R}^p\rightarrow \mathbb{R}$ the projection map onto the first coordinate, i.e. $m(x)=x_1$ for $\bm x=(x_1, x_2, \ldots, x_p)^{\prime}$.
	Define the function $G(u; \bm v_1,\bm v_2)=\bm v_1^{\prime}\bm \phi(u)+S_{\textrm{Q}}\left(b(u)+\bm v_2^{\prime}\bm \beta(u)\right)$ for $u\in (0,1)$ and vectors $\bm v_1,\bm v_2 \in \mathbb{R}^p$, where $b(\cdot)$, $\bm \phi(\cdot)$ and $\bm \beta(\cdot)$ are defined as in Section \ref{Section_estimation}.
	As a result, $\{\bm y_t\}$ is a homogeneous Markov chain on the state space $(\mathbb{R}^p, \mathcal{B}^p,\nu_p)$, with transition probability
	\[
	P(\bm x,A)=\int_{m(A)}2|b(\widetilde{u})+\bm x_{\textrm{Q}}^{\prime}\bm \beta(\widetilde{u})|^{1/2}f\left(S_{\textrm{Q}}^{-1}\left(z-\bm x^{\prime}\bm \phi(\widetilde{u})\right)-\bm x_{\textrm{Q}}^{\prime}\bm \beta(\widetilde{u})\right)dz
	\]
	for $\bm x \in \mathbb{R}^p$ and $A \in \mathcal{B}^p$, where $\bm x_{\textrm{Q}}=(x_1^2, x_2^2, \ldots, x_p^2)^{\prime}$, $\widetilde{u}=G^{-1}(z; \bm x,\bm x_{\textrm{Q}})$ with $G^{-1}$ being the inverse function of $G(u; \bm x,\bm x_{\textrm{Q}})$, $f$ is the density function of $b(u_t)$, and $S_{\textrm{Q}}^{-1}(x)=x^2\sgn(x)$ is the inverse function of $S_{\textrm{Q}}(x)$.
	
	Let $\bm X_{1,i}=(z_i, \ldots, z_1, x_1, \ldots, x_{p-i})^{\prime}$ and $\bm X_{2,i}=(z_i^2, \ldots, z_1^2, x_1^2, \ldots, x_{p-i}^2)^{\prime}$. We can further show that the $p$-step transition probability of the Markov chain $\{\bm y_t\}$ is
	\begin{align}\label{ptran}
	P^p(\bm x, A)=&\int_{A}\prod_{i=1}^p 2|b(\widetilde{u}_i)+\bm X_{2,i-1}^{\prime}\bm \beta(\widetilde{u}_i)|^{1/2} \nonumber \\
	          & \hspace{10mm}\times f\left(S_{\textrm{Q}}^{-1}\left(z_i-\bm X_{1,i-1}^{\prime}\bm \phi(\widetilde{u}_i)\right)-\bm X_{2,i-1}^{\prime}\bm \beta(\widetilde{u}_i)\right)dz_1\ldots dz_p,
	\end{align}
	where $\widetilde{u}_i=G^{-1}(z_i; \bm X_{1,i-1}, \bm X_{2,i-1})$.  
	Observe that, by Assumption \ref{assum0}, 
	$P^p(\bm x, A)>0$ for any $\bm x\in \mathbb{R}^p$ and $A\in\mathcal{B}^p$ with $\nu_p(A)>0$, i.e., $\{\bm y_t\}$ is $\nu_p$-irreducible.	
	
	To show that $\{\bm y_{t}\}$ is geometrically ergodic, next we verify the Tweedie's drift criterion \citep[Theorem 4]{Tweedie1983}.
	Note that $(a+b)^\kappa\leq a^\kappa+b^\kappa$ for $a,b>0$ and $0<\kappa \leq 1$.
	Moreover, since $-|c|-|d|\leq c+d \leq |c|+|d|$ for any constants $c$ and $d$. Then for any $u\in (0,1)$, it follows that
	\[
	-\sqrt{|b(u)|}-\sum_{i=1}^p\sqrt{|\beta_{i}(u)|}|x_i| \leq \left|b(u)+\sum_{i=1}^p\beta_i(u)x_i^2\right|^{1/2} \leq \sqrt{|b(u)|}+\sum_{i=1}^p\sqrt{|\beta_{i}(u)|}|x_i|.
	\]
	 As a result, by Assumption \ref{assum0}, it can be verified that, for some $0<\kappa \leq 1$,
	\begin{align*}
	E(|y_{t+1}|^{\kappa}|\bm y_t=\bm x) =& E\left| \sum_{i=1}^p\phi_i(u_{t+1})x_i+S_{\textrm{Q}}\left(b(u_{t+1})+\sum_{i=1}^p\beta_i(u_{t+1})x_i^2\right) \right|^\kappa \\
	\leq& \sum_{i=1}^p\left[E|\phi_i(u_{t+1})\sgn(x_i)+\sqrt{|\beta_{i}(u_{t+1})|}|^{\kappa}\right]|x_i|^{\kappa} +E|b(u_{t+1})|^{\kappa/2} \\
	\leq& \sum_{i=1}^pa_i|x_i|^{\kappa} +E|b(u_{t+1})|^{\kappa/2},
	\end{align*}
	where $a_i=\max\{E|\phi_{i}(u_{t+1})-\sqrt{|\beta_{i}(u_{t+1})|}|^\kappa,E|\phi_{i}(u_{t+1})+\sqrt{|\beta_{i}(u_{t+1})|}|^\kappa
	\}$ for $1\leq i\leq p$.
	Note that $\sum_{i=1}^pa_i<1$, and then we can find positive values $\{r_1,\ldots,r_{p-1}\}$ such that
	\begin{equation}\label{coe}
	a_p<r_{p-1}<1-\sum_{i=1}^{p-1}a_i \quad\text{and}\quad a_{i+1}+r_{i+1}<r_i<1-\sum_{k=1}^ia_k \;
	\text{for } 1\leq i \leq p-2.
	\end{equation}
	Consider the test function $g(\bm x)=1+|x_1|^\kappa+\sum_{i=1}^{p-1}r_i |x_{i+1}|^\kappa$, and we have that
	\begin{align*}\label{B.3}
	E[g(\bm y_{t+1})&|\bm y_{t}=\bm x] \nonumber \\
	&\leq 1+\sum_{i=1}^{p}a_i|x_i|^{\kappa}+\sum_{i=1}^{p-1}r_i |x_i|^\kappa+E|b(u_{t+1})|^{\kappa/2} \nonumber \\
	&= 1+(a_1+r_1)|x_1|^\kappa+\sum_{i=2}^{p-1}\dfrac{a_i+r_i}{r_{i-1}}r_{i-1}|x_i|^\kappa +\dfrac{a_p}{r_{p-1}}r_{p-1}|x_p|^\kappa+E|b(u_{t+1})|^{\kappa/2} \nonumber \\
	&\leq \rho g(\bm x)+1-\rho+E|b(u_{t+1})|^{\kappa/2},
	\end{align*}
	where, from \eqref{coe},
	\[
	\rho=\max\left\{a_1+r_1,\dfrac{a_2+r_2}{r_{1}},\cdots, \dfrac{a_{p-1}+r_{p-1}}{r_{p-2}},\dfrac{a_p}{r_{p-1}}\right\}<1.
	\]
	
	Denote $\epsilon=1-\rho-(1-\rho+E|b(u_{t+1})|^{\kappa/2})/g(\bm x)$, and $K=\{\bm x: \|\bm x\|\leq L\}$, where $L$ is a positive constant such that $g(\bm x)>1+E|b(u_{t+1})|^{\kappa/2}/(1-\rho)$ as $\|\bm x\|>L$.
	We can verify that
	\begin{align*}
		E[g(\bm y_{t+1})|\bm y_{t}=\bm x]\leq (1-\epsilon)g(\bm x), \quad \bm x \notin K,
	\end{align*}
	and
	\begin{align*}
		E[g(\bm y_{t+1})|\bm y_{t}=\bm x]\leq C<\infty, \quad \bm x \in K,
	\end{align*}
	i.e. Tweedie's drift criterion \citep[Theorem 4]{Tweedie1983} holds.
	Moreover, $\{\bm y_{t}\}$ is a Feller chain since, for each bounded continuous function $g^*(\cdot)$, $E[g^*(\bm y_{t})|\bm y_{t-1}=\bm x]$ is continuous with respect to $\bm x$, and then $K$ is a small set.
	As a result, from Theorem 4(ii) in \cite{Tweedie1983} and Theorems 1 and 2 in \cite{Feigin_Tweedie1985}, $\{\bm y_{t}\}$ is geometrically ergodic with a unique stationary distribution $\pi(\cdot)$, and
	\begin{equation*}
	\int_{\mathbb{R}^p}g(\bm x)\pi(d\bm x)=1+\left(1+\sum_{i=1}^{p-1}r_i\right)E|y_t|^{\kappa}<\infty,
	\end{equation*}
	which implies that $E|y_t|^{\kappa}<\infty$.
	This accomplishes the proof.
\end{proof}

\begin{proof}[Proof of Theorem \ref{thm1}]
	Recall that $q_t(\bm\theta_{\tau})=\bm y_{1,t-1}^{\prime}\bm \phi(\tau) +S_{\textrm{Q}}\left(b(\tau)+\bm y_{2,t-1}^{\prime}\bm \beta(\tau)\right)$, where $\bm\theta_{\tau}=(\bm \phi^{\prime}(\tau),b(\tau),\bm \beta^{\prime}(\tau))^{\prime}$.
	Define $L_n(\bm\theta_{\tau})=n^{-1}\sum_{t=p+1}^{n}\omega_t\ell_t(\bm\theta_{\tau})$, where $\ell_t(\bm\theta_{\tau})=\rho_{\tau}(y_t-q_t(\bm\theta_{\tau}))$.
	To show the consistency, it suffices to verify the following claims:
	\begin{itemize}
		\item[(i)] $E[\sup\limits_{\Theta_{\tau}}\omega_t\ell_t(\bm\theta_{\tau})]<\infty$;
		\item[(ii)] $E[\omega_t\ell_t(\bm\theta_{\tau})]$ has a unique minimum at $\bm\theta_{\tau 0}$;
		\item[(iii)] For any $\bm\theta_{\tau}^{\dagger}\in\Theta_{\tau}$, $E[\sup\limits_{\bm\theta_{\tau} \in B_{\eta}(\bm\theta_{\tau}^{\dagger})}\omega_t|\ell_t(\bm\theta_{\tau})-\ell_t(\bm\theta_{\tau}^{\dagger})|]\rightarrow 0$ as $\eta \rightarrow 0$, where $B_{\eta}(\bm\theta_{\tau}^{\dagger})=\{\bm\theta_{\tau} \in \Theta_{\tau}: \|\bm\theta_{\tau}^{\dagger}-\bm\theta_{\tau}\|<\eta\}$ is an open neighborhood of $\bm\theta_{\tau}^{\dagger}$ with radius $\eta>0$.
	\end{itemize}
    We first prove Claim (i). By Assumptions \ref{assum2}-\ref{assum3}, the boundedness of $b(\tau)$, $\bm\phi(\tau)$ and $\bm\beta(\tau)$, and the fact that $|\rho_{\tau}(x)|\leq |x|$, it holds that
    \[E[\sup_{\Theta_{\tau}}\omega_t\ell_t(\bm\theta_{\tau})]\leq E[\sup_{\Theta_{\tau}}|\omega_ty_t|]+E[\sup_{\Theta_{\tau}}|\omega_tq_t(\bm\theta_{\tau})|]< \infty.\]
    Hence, (i) is verified.
    	
	We next prove (ii). For $x\neq 0$, it holds that
	\begin{align}\label{identity}
	\rho_{\tau}(x-y)-\rho_{\tau}(x)&=-y\psi_{\tau}(x)+y\int_{0}^{1}[I(x\leq ys)-I(x\leq 0)]ds \nonumber \\
	&=-y\psi_{\tau}(x)+(x-y)[I(0>x>y)-I(0<x<y)],
	\end{align}
	where $\psi_{\tau}(x)=\tau-I(x<0)$; see \cite{Knight1998}. Let $\nu_t(\bm\theta_{\tau})=q_t(\bm\theta_{\tau})-q_t(\bm\theta_{\tau 0})$ and $\eta_{t,\tau}=y_{t}-q_t(\bm\theta_{\tau 0})$. By \eqref{identity}, it follows that
	\begin{align*}
	&\ell_t(\bm\theta_{\tau})-\ell_t(\bm\theta_{\tau 0}) \\
	 =&-\nu_t(\bm\theta_{\tau})\psi_{\tau}(\eta_{t,\tau})+[\eta_{t,\tau}-\nu_t(\bm\theta_{\tau})]\left[I(0>\eta_{t,\tau}>\nu_t(\bm\theta_{\tau}))-I(0<\eta_{t,\tau}<\nu_t(\bm\theta_{\tau}))\right].
	\end{align*}
	This, together with $\omega_t\geq 0$ by Assumption \ref{assum3} and $E[\psi_{\tau}(\eta_{t,\tau})]=0$, 
	implies that
	\begin{align}\label{gammasign}
	& E[\omega_t\ell_t(\bm\theta_{\tau})]-E[\omega_t\ell_t(\bm\theta_{\tau 0})] \nonumber \\
	=& E\left\{\omega_t[\eta_{t,\tau}-\nu_t(\bm\theta_{\tau})]\left[I(0>\eta_{t,\tau}>\nu_t(\bm\theta_{\tau}))-I(0<\eta_{t,\tau}<\nu_t(\bm\theta_{\tau}))\right]\right\}  \geq 0.
	\end{align}
	By Assumption \ref{assum4}, $f_{t-1}(x)$ is continuous at a neighborhood of $q_t(\bm\theta_{\tau 0})$, then the above equality holds if and only if $\nu_t(\bm\theta_{\tau})=0$ a.s. for some $t \in \mathbb{Z}$. Then we have
   \[\bm y_{1,t-1}^{\prime}[\bm\phi_0(\tau)-\bm\phi(\tau)] = S_{\textrm{Q}}\left(b(\tau)+\bm y_{2,t-1}^{\prime}\bm\beta(\tau)\right)- S_{\textrm{Q}}\left(b_0(\tau)+\bm y_{2,t-1}^{\prime}\bm\beta_0(\tau)\right).\]
    Note that $b_0(\tau)\neq 0$ and, given $\mathcal{F}_{t-2}$, $y_{t-1}$ is independent of all the others. As a result, it holds that $\phi_1(\tau)=\phi_{10}(\tau)$ and $\beta_1(\tau)=\beta_{10}(\tau)$. Sequentially we can show that $\phi_i(\tau)=\phi_{i0}(\tau)$ and $\beta_i(\tau)=\beta_{i0}(\tau)$ for $i\geq 2$, and hence $b(\tau)=b_0(\tau)$.
	Therefore, $\bm\theta_{\tau}=\bm\theta_{\tau 0}$ and (ii) is verified.
	
	Finally, we show (iii). By Taylor expansion, it holds that
	\[|q_t(\bm\theta_{\tau})-q_t(\bm\theta_{\tau}^{\dagger})|\leq \|\bm\theta_{\tau}-\bm\theta_{\tau}^{\dagger}\|\left\|\dot{q}_t(\bm\theta_{\tau}^{*})\right\|,\]
	where $\bm\theta_{\tau}^{*}$ is between $\bm\theta_{\tau}$ and $\bm\theta_{\tau}^{\dagger}$.
	This together with the Lipschitz continuity of $\rho_{\tau}(x)$, by Assumption \ref{assum3}, we have
	\[
	E[\sup_{\bm\theta_{\tau} \in B_{\eta}(\bm\theta_{\tau}^{\dagger})}\omega_t|\ell_t(\bm\theta_{\tau})-\ell_t(\bm\theta_{\tau}^{\dagger})|] \leq C\eta E[\omega_t\dot{q}_t(\bm\theta_{\tau}^{*})]
	\]
	tends to 0 as $\eta \to 0$. Hence, Claim (iii) holds.
	
	Based on Claims (i)-(iii), by a method similar to that in \cite{Huber1973}, we next verify the consistency.
	Let $V$ be any open neighborhood of $\bm\theta_{\tau 0} \in \Theta_{\tau}$. By Claim (iii), for any $\bm\theta_{\tau}^{\dagger} \in V^c=\Theta_{\tau}/V$ and $\epsilon>0$, there exists an $\eta_0>0$ such that
	\begin{eqnarray}\label{thm1_ia}
	E[\inf_{\bm\theta_{\tau} \in B_{\eta_0}(\bm\theta_{\tau}^{\dagger})}\omega_t\ell_t(\bm\theta_{\tau})]\geq E[\omega_t\ell_t(\bm\theta_{\tau}^{\dagger})]-\epsilon.
	\end{eqnarray}
	From Claim (i), by the ergodic theorem, it follows that
	\begin{eqnarray}\label{thm1_ib}
	\dfrac{1}{n}\sum_{t=p+1}^{n}\inf_{\bm\theta_{\tau} \in B_{\eta_0}(\bm\theta_{\tau}^{\dagger})}\omega_t\ell_t(\bm\theta_{\tau}) \geq E[\inf_{\bm\theta_{\tau} \in B_{\eta_0}(\bm\theta_{\tau}^{\dagger})}\omega_t\ell_t(\bm\theta_{\tau})]-\epsilon
	\end{eqnarray}
	as $n$ is large enough. Since $V^c$ is compact, we can choose $\{B_{\eta_0}(\bm\theta_{\tau i}): \bm\theta_{\tau i} \in V^c, i=1,\ldots, k\}$ to be a finite covering of $V^c$. Then by \eqref{thm1_ia} and \eqref{thm1_ib}, we have
	\begin{eqnarray}\label{thm1_ic}
	\inf_{\bm\theta_{\tau} \in V^c}L_n(\bm\theta_{\tau})&=& \min_{1\leq i \leq k} \inf_{\bm\theta_{\tau} \in B_{\eta_0}(\bm\theta_{\tau i})}L_n(\bm\theta_{\tau}) \nonumber\\
	&\geq&\min_{1\leq i \leq k}\dfrac{1}{n}\sum_{t=p+1}^{n}\inf_{\bm\theta_{\tau} \in B_{\eta_0}(\bm\theta_{\tau i})}\omega_t\ell_t(\bm\theta_{\tau}) \nonumber\\
	&\geq&\min_{1\leq i \leq k}E[\inf_{\bm\theta_{\tau} \in B_{\eta_0}(\bm\theta_{\tau i})}\omega_t\ell_t(\bm\theta_{\tau})]-\epsilon
	\end{eqnarray}
	as $n$ is large enough. Moreover, for each $\bm\theta_{\tau i} \in V^c$, by Claim (ii), there exists an $\epsilon_0>0$ such that
	\begin{eqnarray}\label{thm1_id}
	E[\inf_{\bm\theta_{\tau} \in B_{\eta_0}(\bm\theta_{\tau i})}\omega_t\ell_t(\bm\theta_{\tau})] \geq E[\omega_t\ell_t(\bm\theta_{\tau 0})]+3\epsilon_0.
	\end{eqnarray}
	Therefore, by \eqref{thm1_ic} and \eqref{thm1_id}, taking $\epsilon=\epsilon_0$, it holds that
	\begin{eqnarray}\label{thm1_ie}
	\inf_{\bm\theta_{\tau} \in V^c}L_n(\bm\theta_{\tau})\geq E[\omega_t\ell_t(\bm\theta_{\tau 0})]+2\epsilon_0.
	\end{eqnarray}	
	Furthermore, by the ergodic theorem, it follows that
	\begin{eqnarray}\label{thm1_if}
	\inf_{\bm\theta_{\tau} \in V}L_n(\bm\theta_{\tau})\leq L_n(\bm\theta_{\tau 0})= \dfrac{1}{n}\sum_{t=p+1}^{n}\omega_t \ell_t(\bm\theta_{\tau 0}) \leq E[\omega_t\ell_t(\bm\theta_{\tau 0})]+\epsilon_0.
	\end{eqnarray}	
	Combing \eqref{thm1_ie} and \eqref{thm1_if}, we have
	\begin{eqnarray}\label{thm1_ig}
	\inf_{\bm\theta_{\tau} \in V^c}L_n(\bm\theta_{\tau})\geq E[\omega_t\ell_t(\bm\theta_{\tau 0})]+2\epsilon_0 > E[\omega_t\ell_t(\bm\theta_{\tau 0})]+\epsilon_0 \geq \inf_{\bm\theta_{\tau} \in V}L_n(\bm\theta_{\tau}),
	\end{eqnarray}	
	which implies that
	\[\widehat{\bm\theta}_{\tau n} \in V \hspace{5mm} \text{a.s.} \hspace{2mm} \text{for} \hspace{2mm} \forall V, \hspace{2mm} \text{as} \hspace{2mm} n \hspace{2mm}\text{is large enough}.\]
	By the arbitrariness of $V$, it implies that $\widehat{\bm\theta}_{\tau n}\rightarrow \bm\theta_{\tau 0}$ a.s.
	The proof of this theorem is complete.	
\end{proof}

\begin{proof}[Proof of Theorem \ref{thm2}]	
	For $\bm u\in \mathbb{R}^{2p+1}$, define $H_n(\bm u)=n[L_n(\bm\theta_{\tau 0}+\bm u)-L_n(\bm\theta_{\tau 0})]$, where $L_n(\bm\theta_{\tau})=n^{-1}\sum_{t=p+1}^{n}\omega_t\rho_{\tau}(y_t-q_t(\bm\theta_{\tau}))$. Denote $\widehat{\bm u}_n=\widehat{\bm\theta}_{\tau n}-\bm\theta_{\tau 0}$. By Theorem \ref{thm1}, it holds that $\widehat{\bm u}_n=o_p(1)$.	
	Note that $\widehat{\bm u}_n$ is the minimizer of $H_n(\bm u)$, since $\widehat{\bm\theta}_{\tau n}$ minimizes $L_n(\bm\theta_{\tau})$. Define $J=\Omega_1(\tau)/2$. By Assumption \ref{assum3} and the ergodic theorem, $J_n=J+o_p(1)$, where $J_n$ is defined in Lemma \ref{lem2}.	
	Moreover, from Lemma \ref{lem2}, it follows that
	\begin{align}\label{Hn}
	H_n(\widehat{\bm u}_n)=&-\sqrt{n}\widehat{\bm u}_n^{\prime}\bm T_n+\sqrt{n}\widehat{\bm u}_n^{\prime}J\sqrt{n}\widehat{\bm u}_n+o_p(\sqrt{n}\|\widehat{\bm u}_n\|+n\|\widehat{\bm u}_n\|^2) \\
	\geq &-\sqrt{n}\|\widehat{\bm u}_n\|[\|\bm T_n\|+o_p(1)]+n\|\widehat{\bm u}_n\|^2[\lambda_{\min}+o_p(1)], \nonumber
	\end{align}
	where $\lambda_{\min}$ is the smallest eigenvalue of $J$, and $\bm T_n$ is defined in Lemma \ref{lem2}.
	Note that, as $n\rightarrow \infty$, $\bm T_n$ converges in distribution to a normal random variable with mean zero and variance matrix $\tau(1-\tau)\Omega_0(\tau)$.
	
	Since $H_n(\widehat{\bm u}_n)\leq 0$, by Assumption \ref{assum3}, it holds that
	\begin{align}\label{root-n}
	\sqrt{n}\|\widehat{\bm u}_n\|\leq [\lambda_{\min}+o_p(1)]^{-1}[\|\bm T_n\|+o_p(1)]=O_p(1).
	\end{align}
	This together with Theorem \ref{thm1}, verifies the $\sqrt{n}$-consistency. Hence, Statement (i) holds.
	
	Let $\sqrt{n}\bm u_n^*=J^{-1}\bm T_n/2=\Omega_1^{-1}(\tau)\bm T_n$, then we have
	\begin{align*}
	\sqrt{n}\bm u_n^*\rightarrow N\left(\bm 0,\tau(1-\tau)\Omega_1^{-1}(\tau)\Omega_0(\tau)\Omega_1^{-1}(\tau)\right)
	\end{align*}
	in distribution as $n\rightarrow \infty$. Therefore, it suffices to show that $\sqrt{n}\bm u_n^*-\sqrt{n}\widehat{\bm u}_n=o_p(1)$.
	By \eqref{Hn} and \eqref{root-n}, we have
	\begin{align*}
	H_n(\widehat{\bm u}_n)=&-\sqrt{n}\widehat{\bm u}_n^{\prime}\bm T_n+\sqrt{n}\widehat{\bm u}_n^{\prime}J\sqrt{n}\widehat{\bm u}_n+o_p(1) \\
	=&-2\sqrt{n}\widehat{\bm u}_n^{\prime}J\sqrt{n}\bm u_n^*+\sqrt{n}\widehat{\bm u}_n^{\prime}J\sqrt{n}\widehat{\bm u}_n+o_p(1),
	\end{align*}
	and
	\begin{align*}
	H_n(\bm u_n^*)=&-\sqrt{n}\bm u_n^{*\prime}\bm T_n+\sqrt{n}\bm u_n^{*\prime}J\sqrt{n}\bm u_n^*+o_p(1)
	=-\sqrt{n}\bm u_n^{*\prime}J\sqrt{n}\bm u_n^*+o_p(1).
	\end{align*}
	It follows that
	\begin{align}\label{normality}
	H_n(\widehat{\bm u}_n)-H_n(\bm u_n^*)=&(\sqrt{n}\widehat{\bm u}_n-\sqrt{n}\bm u_n^*)^{\prime}J(\sqrt{n}\widehat{\bm u}_n-\sqrt{n}\bm u_n^*)+o_p(1)  \nonumber \\
	\geq & \lambda_{\min}\|\sqrt{n}\widehat{\bm u}_n-\sqrt{n}\bm u_n^*\|^2+o_p(1).
	\end{align}
	Since $H_n(\widehat{\bm u}_n)-H_n(\bm u_n^*)=n[L_n(\bm\theta_{\tau 0}+\widehat{\bm u}_n)-L_n(\bm\theta_{\tau 0}+\bm u_n^*)]\leq 0$ a.s., then \eqref{normality} implies that $\|\sqrt{n}\widehat{\bm u}_n-\sqrt{n}\bm u_n^*\|=o_p(1)$. We verify the asymptotic normality in Statement (ii), the proof is hence accomplished.
	
\end{proof}

\begin{proof}[Proof of Corollary \ref{cor3}]	
Recall that $q_t(\bm\theta_{\tau}) =\bm y_{1,t-1}^{\prime}\bm \phi(\tau) +S_{\textrm{Q}}\left(b(\tau)+\bm y_{2,t-1}^{\prime}\bm \beta(\tau)\right)$, $Q_{\tau}(y_{n+1}|\mathcal{F}_{n})=q_{t+1}(\bm\theta_{\tau 0})$ and $\widehat{Q}_{\tau}(y_{n+1}|\mathcal{F}_{n})=q_{t+1}(\widehat{\bm\theta}_{\tau n})$. 
Since $\min\{|b(\tau)|,|\beta_1(\tau)|,\ldots,|\beta_p(\tau)|\}\geq C$ for a constant $C>0$ is assumed for $\bm\theta_{\tau}$, then the first and second derivative functions of $q_t(\bm\theta_{\tau})$ are well-defined. Specifically, the first derivative function is $\dot{q}_t(\bm\theta_{\tau})=(\bm y_{1,t-1}^{\prime},0.5|h_{t}(\bm\theta_{\tau})|^{-1/2},0.5|h_{t}(\bm\theta_{\tau})|^{-1/2}\bm y_{2,t-1}^{\prime})^{\prime}$, and the second derivative function is
\begin{align}\label{ddotqt}
\ddot{q}_t(\bm\theta_{\tau})=\begin{pmatrix} \bm 0 & -\dfrac{\text{sgn}(h_{t}(\bm\theta_{\tau}))}{|h_{t}(\bm\theta_{\tau})|^{3/2}} & -\dfrac{\text{sgn}(h_{t}(\bm\theta_{\tau}))}{|h_{t}(\bm\theta_{\tau})|^{3/2}}\bm y_{2,t-1}^{\prime} \\
\bm 0 & \bm 0 & \bm 0 \\
\bm 0 & -\dfrac{\text{sgn}(h_{t}(\bm\theta_{\tau}))}{|h_{t}(\bm\theta_{\tau})|^{3/2}}\bm y_{2,t-1} & -\dfrac{\text{sgn}(h_{t}(\bm\theta_{\tau}))}{|h_{t}(\bm\theta_{\tau})|^{3/2}}\bm y_{2,t-1}\bm y_{2,t-1}^{\prime}	
\end{pmatrix}
\end{align}
with $h_{t}(\bm\theta_{\tau})=b(\tau)+\bm y_{2,t-1}^{\prime}\bm \beta(\tau)$. 
By Taylor expansion, we have
\begin{align}\label{qtExpansion}
q_{t+1}(\widehat{\bm\theta}_{\tau n})-q_{t+1}(\bm\theta_{\tau 0})=\dot{q}_{t+1}^{\prime}(\bm\theta_{\tau 0})(\widehat{\bm\theta}_{\tau n}-\bm\theta_{\tau 0})+\dfrac{1}{2}(\bm\theta_{\tau}^*-\bm\theta_{\tau 0})^{\prime}\ddot{q}_{t+1}(\bm\theta_{\tau}^*)(\bm\theta_{\tau}^*-\bm\theta_{\tau 0}),
\end{align}	
where $\bm\theta_{\tau}^*$ is between $\bm\theta_{\tau 0}$ and $\widehat{\bm\theta}_{\tau n}$.  
By Theorem \ref{thm2}, we have $\widehat{\bm\theta}_{\tau n}-\bm\theta_{\tau 0}=O_p(n^{-1/2})$. 
Then both $\dot{q}_t(\bm\theta_{\tau 0})$ and $\ddot{q}_t(\bm\theta_{\tau}^*)$ are well-defined. 
To show the representation of Corollary \ref{cor3}, it is sufficient to show that $\ddot{q}_{t+1}(\bm\theta_{\tau}^*)=O_p(1)$. The definition of $\ddot{q}_t(\cdot)$ in \eqref{ddotqt}  together with $E(|y_t|)<\infty$ implies that $\ddot{q}_{t+1}(\bm\theta_{\tau}^*)=O_p(1)$. The proof of this corollary is complete. 
\end{proof}

\begin{proof}[Proof of Theorem \ref{thm3}]	
We first show that $\widehat{\mu}_{i,\tau}=\mu_{i,\tau}+o_p(1)$ and $\widehat{\sigma}_{i,\tau}^2=\sigma_{i,\tau}^2+o_p(1)$ for $i=1$ and 2. 	
Recall that $\eta_{t,\tau}=y_t-Q_{\tau}(y_t|\mathcal{F}_{t-1})=y_t-q_t(\bm\theta_{\tau 0})$ and $\widehat{\eta}_{t,\tau}=y_t-\widehat{Q}_{\tau}(y_t|\mathcal{F}_{t-1})=y_t-q_t(\widehat{\bm\theta}_{\tau n})$.
By the Taylor expansion, we have
\[\widehat{\eta}_{t,\tau}-\eta_{t,\tau}=-[q_t(\widehat{\bm\theta}_{\tau n})-q_t(\bm\theta_{\tau 0})]=-\dot{q}_t^{\prime}(\bm\theta_{\tau}^{*})(\widehat{\bm\theta}_{\tau n}-\bm\theta_{\tau 0}),\]
and
\[\widehat{\eta}_{t,\tau}^2-\eta_{t,\tau}^2=-[2y_t-q_t(\bm\theta_{\tau 0})-q_t(\widehat{\bm\theta}_{\tau n})]\dot{q}_t^{\prime}(\bm\theta_{\tau}^{*})(\widehat{\bm\theta}_{\tau n}-\bm\theta_{\tau 0}),\]
where $\bm\theta_{\tau}^{*}$ is between $\bm\theta_{\tau 0}$ and $\widehat{\bm\theta}_{\tau n}$.
Then by the law of large numbers, $E(y_t^2)<\infty$ and the fact that $\widehat{\bm\theta}_{\tau n}-\bm\theta_{\tau 0}=O_p(n^{-1/2})$, it holds that
\begin{align}\label{mu1}
\widehat{\mu}_{1,\tau}=\dfrac{1}{n-p}\sum_{t=p+1}^n\eta_{t,\tau}+\dfrac{1}{n-p}\sum_{t=p+1}^n(\widehat{\eta}_{t,\tau}-\eta_{t,\tau})=\mu_{1,\tau}+o_p(1),
\end{align}
and
\begin{align}\label{sigma1}
\widehat{\sigma}_{1,\tau}^2=\dfrac{1}{n-p}\sum_{t=p+1}^n(\widehat{\eta}_{t,\tau}^2-\eta_{t,\tau}^2)+\dfrac{1}{n-p}\sum_{t=p+1}^n\eta_{t,\tau}^2-\widehat{\mu}_{1,\tau}^2=\sigma_{1,\tau}^2+o_p(1).
\end{align}
Similarly, we can show that
\begin{align*}
\widehat{\mu}_{2,\tau}&=\dfrac{1}{n-p}\sum_{t=p+1}^n|\eta_{t,\tau}|+\dfrac{1}{n-p}\sum_{t=p+1}^n(|\widehat{\eta}_{t,\tau}|-|\eta_{t,\tau}|) \\
&\leq \dfrac{1}{n-p}\sum_{t=p+1}^n|\eta_{t,\tau}|+\dfrac{1}{n-p}\sum_{t=p+1}^n|\widehat{\eta}_{t,\tau}-\eta_{t,\tau}|=\mu_{2,\tau}+o_p(1),
\end{align*}
and
\[\widehat{\sigma}_{2,\tau}^2=\dfrac{1}{n-p}\sum_{t=p+1}^n(|\widehat{\eta}_{t,\tau}|^2-|\eta_{t,\tau}|^2)+\dfrac{1}{n-p}\sum_{t=p+1}^n|\eta_{t,\tau}|^2-\widehat{\mu}_{2,\tau}^2=\sigma_{2,\tau}^2+o_p(1).\]

Since $|\sum_{t=p+1}^n\psi_{\tau}(\widehat{\eta}_{t,\tau})|<1$, by an elementary calculation, we have
\begin{align}\label{decomposition}
&\dfrac{1}{\sqrt{n}}\sum_{t=p+k+1}^n w_t\psi_{\tau}(\widehat{\eta}_{t,\tau})[\widehat{\eta}_{t-k,\tau}-\widehat{\mu}_{1,\tau}] \nonumber\\
=&\dfrac{1}{\sqrt{n}}\sum_{t=p+k+1}^nw_t\psi_{\tau}(\widehat{\eta}_{t,\tau})\widehat{\eta}_{t-k,\tau}+O_p(n^{-1/2}) \nonumber\\
=&\dfrac{1}{\sqrt{n}}\sum_{t=p+k+1}^nw_t\psi_{\tau}(\eta_{t,\tau})\eta_{t-k,\tau}+A_{n1}+A_{n2}+A_{n3}+O_p(n^{-1/2}),
\end{align}
where
\begin{align*}
A_{n1}&=\dfrac{1}{\sqrt{n}}\sum_{t=p+k+1}^nw_t[\psi_{\tau}(\widehat{\eta}_{t,\tau})-\psi_{\tau}(\eta_{t,\tau})]\eta_{t-k,\tau},\\
A_{n2}&=\dfrac{1}{\sqrt{n}}\sum_{t=p+k+1}^nw_t\psi_{\tau}(\eta_{t,\tau})(\widehat{\eta}_{t-k,\tau}-\eta_{t-k,\tau}),\\
A_{n3}&=\dfrac{1}{\sqrt{n}}\sum_{t=p+k+1}^nw_t[\psi_{\tau}(\widehat{\eta}_{t,\tau})-\psi_{\tau}(\eta_{t,\tau})](\widehat{\eta}_{t-k,\tau}-\eta_{t-k,\tau}).
\end{align*}
First, we consider $A_{n1}$. For any $\bm\nu\in \mathbb{R}^{2p+1}$, denote
\[\zeta_t(\bm\nu)=w_t[\psi_{\tau}(y_t-q_t(\bm\theta_{\tau 0}+n^{-1/2}\bm\nu))-\psi_{\tau}(y_t-q_t(\bm\theta_{\tau 0}))]\eta_{t-k,\tau}\quad\text{and} \]
\[\phi_n(\bm\nu)=\dfrac{1}{\sqrt{n}}\sum_{t=p+k+1}^n\{\zeta_t(\bm\nu)-E[\zeta_t(\bm\nu)|\mathcal{F}_{t-1}]\}.\]
Note that $w_t\in\mathcal{F}_{t-1}$. Then by the Taylor expansion and the Cauchy-Schiwarz inequality, together with $E[w_{t}\|\bm y_{1,t-1}\|^3]<\infty$ by Assumption \ref{assum3} and the fact that $f_{t-1}(x)$ is bounded by Assumption \ref{assum4}, it holds that
\begin{align*}
E[\zeta_t^2(\bm\nu)]&=E\{w_t|I(y_t<q_t(\bm\theta_{\tau 0}+n^{-1/2}\bm\nu))-I(y_t<q_t(\bm\theta_{\tau 0}))|\cdot w_t\eta_{t-k,\tau}^2\} \\
&\leq \left\{E[w_t|F_{t-1}(q_t(\bm\theta_{\tau 0}+n^{-1/2}\bm\nu))-F_{t-1}(q_t(\bm\theta_{\tau 0}))|]^2\right\}^{1/2}[E(w_t^2\eta_{t-k,\tau}^4)]^{1/2} \\
&\leq C \left\{E[f_{t-1}^2(q_t(\bm\theta_{\tau}^{*}))|n^{-1/2}\bm\nu^{\prime}w_t^2\dot{q}_t(\bm\theta_{\tau}^{*})\dot{q}_t^{\prime}(\bm\theta_{\tau}^{*}) n^{-1/2}\bm\nu|]\right\}^{1/2} \\
&\leq C n^{-1/2}\|\bm\nu\|\left\{\|E[f_{t-1}^2(\bm\theta_{\tau}^{*})w_t^2\dot{q}_t(\bm\theta_{\tau}^{*})\dot{q}_t^{\prime}(\bm\theta_{\tau}^{*})]\|\right\}^{1/2}=o(1).
\end{align*}
where $\bm\theta_{\tau}^{*}$ is between $\bm\theta_{\tau 0}$ and $\widehat{\bm\theta}_{\tau n}$. This together with $E[\phi_n(\bm\nu)]=0$, implies that
\begin{align}\label{phinsquare}
E[\phi_n^2(\bm\nu)]\leq \dfrac{1}{n}\sum_{t=p+k+1}^n E[\zeta_t^2(\bm\nu)]=o(1).
\end{align}
For any $\bm\nu_1,\bm\nu_2\in \mathbb{R}^{2p+1}$ and $\delta>0$, it holds that
\begin{align*}
  &I(y_t<q_t(\bm\theta_{\tau 0}+n^{-1/2}\bm\nu_2))-I(y_t<q_t(\bm\theta_{\tau 0}+n^{-1/2}\bm\nu_1))  \\
 = & I(q_t(\bm\theta_{\tau 0}+n^{-1/2}\bm\nu_1)<y_t<q_t(\bm\theta_{\tau 0}+n^{-1/2}\bm\nu_2)) \\
 &-I(q_t(\bm\theta_{\tau 0}+n^{-1/2}\bm\nu_1)>y_t>q_t(\bm\theta_{\tau 0}+n^{-1/2}\bm\nu_2)).
\end{align*}
Then by the Taylor expansion, for any $\bm\nu_1,\bm\nu_2\in \mathbb{R}^{2p+1}$ and $\delta>0$, it can be verified that
\begin{align*}
&E\sup_{\|\bm\nu_1-\bm\nu_2\|\leq \delta}|\zeta_t(\bm\nu_1)-\zeta_t(\bm\nu_2)| \\
\leq &\sup_{\|\bm\nu_1-\bm\nu_2\| \leq \delta}|w_t[I(y_t<q_t(\bm\theta_{\tau 0}+n^{-1/2}\bm\nu_2))-I(y_t<q_t(\bm\theta_{\tau 0}+n^{-1/2}\bm\nu_1))]\eta_{t-k,\tau}| \\
\leq &E \left\{w_tI(|y_t|\leq \sup_{\|\bm\nu_1-\bm\nu_2\| \leq \delta} |q_t(\bm\theta_{\tau 0}+n^{-1/2}\bm\nu_2)-q_t(\bm\theta_{\tau 0}+n^{-1/2}\bm\nu_1)|)|\eta_{t-k,\tau}|\right\} \\
\leq & E \left\{w_t\text{Pr}\left(|y_t|\leq \sup_{\|\bm\nu_1-\bm\nu_2\| \leq \delta} n^{-1/2}\|\bm\nu_1-\bm\nu_2\|\|\dot{q}_t(\bm\theta_{\tau}^{*})\|\right)|\eta_{t-k,\tau}|\right\} \\
\leq & 2n^{-1/2}\delta E[\|w_t\sup_{x}f_{t-1}(x)\dot{q}_t(\bm\theta_{\tau}^{*})\eta_{t-k,\tau}\|],
\end{align*}
where $\bm\theta_{\tau}^{*}$ is between $\bm\theta_{\tau 0}+n^{-1/2}\bm\nu_1$ and $\bm\theta_{\tau 0}+n^{-1/2}\bm\nu_2$. This together with $E[w_{t}\|\bm y_{1,t-1}\|^3]<\infty$ by Assumption \ref{assum3}, implies that
\begin{align}\label{phindiffabs}
E\sup_{\|\bm\nu_1-\bm\nu_2\|\leq \delta}|\phi_n(\bm\nu_1)-\phi_n(\bm\nu_2)| \leq \dfrac{2}{\sqrt{n}}\sum_{t=p+k+1}^nE\sup_{\|\bm\nu_1-\bm\nu_2\|\leq \delta}|\zeta_t(\bm\nu_1)-\zeta_t(\bm\nu_2)| \leq C\delta.
\end{align}
Therefore, it follows from \eqref{phinsquare}, \eqref{phindiffabs} and the finite covering theorem that
\begin{align}\label{phinabs}
\sup_{\|\bm\nu\|\leq M}|\phi_n(\bm\nu)| = o_p(1).
\end{align}
Note that $E[\zeta_t(\bm\nu)|\mathcal{F}_{t-1}]=w_t[F_{t-1}(q_t(\bm\theta_{\tau 0}))-F_{t-1}(q_t(\bm\theta_{\tau 0}+n^{-1/2}\bm\nu))]\eta_{t-k,\tau}$.
Moreover, by the Taylor expansion and $E[w_{t}\|\bm y_{1,t-1}\|^3]<\infty$ by Assumption \ref{assum3}, we can show that
\begin{align*}
&\dfrac{1}{\sqrt{n}}\sum_{t=p+k+1}^nE[\zeta_t(\bm\nu)|\mathcal{F}_{t-1}] \\
=& -\dfrac{1}{n}\sum_{t=p+k+1}^n w_t f_{t-1}(F_{t-1}^{-1}(\tau))\dot{q}_t^{\prime}(\bm\theta_{\tau 0})\eta_{t-k,\tau} \cdot\bm\nu \\
&- \bm\nu^{\prime}\cdot \dfrac{1}{n}\sum_{t=p+k+1}^n w_t f_{t-1}(F_{t-1}^{-1}(\tau))\ddot{q}_t(\bm\theta_{\tau}^{*})\eta_{t-k,\tau} \cdot n^{-1/2}\bm\nu^* \\
&- \bm\nu^{\prime}\cdot \dfrac{1}{n}\sum_{t=p+k+1}^n w_t f_{t-1}(q_t(\bm\theta_{\tau}^{**}))\dot{q}_t(\bm\theta_{\tau}^{**})\dot{q}_t^{\prime}(\bm\theta_{\tau}^{\ddag})\eta_{t-k,\tau} \cdot n^{-1/2}\bm\nu^{\dag},
\end{align*}
where $\bm\nu^*$ and $\bm\nu^{\dag}$ are between $\bm 0$ and $\bm\nu$, and $\bm\theta_{\tau}^{*}$, $\bm\theta_{\tau}^{**}$, $\bm\theta_{\tau}^{\ddag}$ are between $\bm\theta_{\tau 0}$ and $\bm\theta_{\tau 0}+n^{-1/2}\bm\nu$.
Then by the law of large numbers, Assumptions \ref{assum3} and \ref{assum4}, it follows that
\begin{align*}
  \sup_{\|\bm\nu\|\leq \delta} \left|\dfrac{1}{\sqrt{n}}\sum_{t=p+k+1}^nE[\zeta_t(\bm\nu)|\mathcal{F}_{t-1}]+H_{1k}^{\prime}\bm\nu\right| \leq n^{-1/2}C\delta +o_p(1)=o_p(1).
\end{align*}
This together with \eqref{phinabs} and the fact that $\sqrt{n}(\widehat{\bm\theta}_{\tau n}-\bm\theta_{\tau 0})=O_p(1)$, implies that
\begin{align}\label{An1}
A_{n1}=\dfrac{1}{\sqrt{n}}\sum_{t=p+k+1}^nE[\zeta_t(\bm\nu)|\mathcal{F}_{t-1}]+o_p(1)=-H_{1k}^{\prime}\sqrt{n}(\widehat{\bm\theta}_{\tau n}-\bm\theta_{\tau 0})+o_p(1).
\end{align}
Next, we consider $A_{n2}$. Since $E[\psi_{\tau}(\eta_{t,\tau})]=0$, it holds that $E(A_{n2})=0$. By the Taylor expansion, the law of large numbers and the fact that $\sqrt{n}(\widehat{\bm\theta}_{\tau n}-\bm\theta_{\tau 0})=O_p(1)$, we can verify that $E(A_{n2}^2)=o_p(1)$ under Assumption \ref{assum3}. Hence,
\begin{align}\label{An2}
A_{n2}=o_p(1).
\end{align}
Finally, we consider $A_{n3}$. For any $\bm\nu\in \mathbb{R}^{2p+1}$, denote $\eta_{t,\tau}(\bm\nu)=y_t-q_t(\bm\theta_{\tau 0}+n^{-1/2}\bm\nu)$ and
\[\varsigma_t(\bm\nu)=w_t[\psi_{\tau}(y_t-q_t(\bm\theta_{\tau 0}+n^{-1/2}\bm\nu))-\psi_{\tau}(y_t-q_t(\bm\theta_{\tau 0}))][\eta_{t-k,\tau}(\bm\nu)-\eta_{t-k,\tau}].\]
By a method similar to the proof of \eqref{phinsquare} and \eqref{phinabs}, we can show that, for any $\delta>0$,
\[\sup_{\|\bm\nu\|\leq \delta}\left|\dfrac{1}{\sqrt{n}}\sum_{t=p+k+1}^n\{\varsigma_t(\bm\nu)-E[\varsigma_t(\bm\nu)|\mathcal{F}_{t-1}]\}\right|=o_p(1)\]
and
\[\sup_{\|\bm\nu\|\leq \delta}\left|\dfrac{1}{\sqrt{n}}\sum_{t=p+k+1}^nE[\varsigma_t(\bm\nu)|\mathcal{F}_{t-1}]\right|=o_p(1).\]
As a result,
\[\sup_{\|\bm\nu\|\leq \delta}\left|\dfrac{1}{\sqrt{n}}\sum_{t=p+k+1}^nw_t[\psi_{\tau}(y_t-q_t(\bm\theta_{\tau 0}+n^{-1/2}\bm\nu))-\psi_{\tau}(y_t-q_t(\bm\theta_{\tau 0}))][\eta_{t-k,\tau}(\bm\nu)-\eta_{t-k,\tau}]\right|=o_p(1),\]
which together with $\sqrt{n}(\widehat{\bm\theta}_{\tau n}-\bm\theta_{\tau 0})=O_p(1)$, implies that
\begin{align}\label{An3}
A_{n3}=o_p(1).
\end{align}
Combing \eqref{mu1}-\eqref{decomposition}, \eqref{An1}-\eqref{An3} and Theorem \ref{thm2}, we have
\[\widehat{\rho}_{k,\tau}=\dfrac{1}{\sqrt{(\tau-\tau^2)\sigma_{1,\tau}^2}}\dfrac{1}{n}\sum_{t=p+k+1}^n [w_t\eta_{t-k,\tau}-H_{1k}^{\prime}\Omega_1^{-1}(\tau)w_t\dot{q}_t(\bm\theta_{\tau 0})]\psi_{\tau}(\eta_{t,\tau})+o_p(n^{-1/2}).\]
Therefore, for $\widehat{\bm\rho}=(\widehat{\rho}_{1,\tau},\ldots,\widehat{\rho}_{K,\tau})^{\prime}$, we have
\[\widehat{\bm\rho}=\dfrac{1}{\sqrt{(\tau-\tau^2)\sigma_{1,\tau}^2}}\dfrac{1}{n}\sum_{t=p+k+1}^n [w_t\bm\epsilon_{1,t-1}-H_{1}(\tau)\Omega_1^{-1}(\tau)w_t\dot{q}_t(\bm\theta_{\tau 0})]\psi_{\tau}(\eta_{t,\tau})+o_p(n^{-1/2}),\]
where $\bm\epsilon_{1,t}=(\eta_{t,\tau},\ldots,\eta_{t-K+1,\tau})^{\prime}$ and $H_{1}(\tau)=(H_{11},\ldots, H_{1K})^{\prime}$.

Let $\widehat{\bm r}=(\widehat{r}_{1,\tau},\ldots,\widehat{r}_{K,\tau})^{\prime}$. Similar to the proof of $\widehat{\bm\rho}$, we can show that
\[\widehat{\bm r}=\dfrac{1}{\sqrt{(\tau-\tau^2)\sigma_{2,\tau}^2}}\dfrac{1}{n}\sum_{t=p+k+1}^n [w_t\bm\epsilon_{2,t-1}-H_{2}(\tau)\Omega_1^{-1}(\tau)w_t\dot{q}_t(\bm\theta_{\tau 0})]\psi_{\tau}(\eta_{t,\tau})+o_p(n^{-1/2}),\]
where $\bm\epsilon_{2,t}=(|\eta_{t,\tau}|,\ldots,|\eta_{t-K+1,\tau}|)^{\prime}$ and $H_{2}(\tau)=(H_{21},\ldots, H_{2K})^{\prime}$. 
Therefore, it follows that
\[
\sqrt{n}(\widehat{\bm \rho}^\prime,\widehat{\bm r}^\prime)^\prime=\dfrac{1}{\sqrt{\tau-\tau^2}}\dfrac{1}{\sqrt{n}}\sum_{t=p+k+1}^n[w_t\bm\epsilon_{t-1}-H(\tau)\Omega_1^{-1}(\tau)w_t\dot{q}_t(\bm\theta_{\tau 0})]\psi_{\tau}(\eta_{t,\tau})+o_p(n^{-1/2}),
\]
where $\bm\epsilon_{t}=(\sigma_{1,\tau}^{-1}\bm\epsilon_{1,t}^\prime,\sigma_{2,\tau}^{-1}\bm\epsilon_{2,t}^\prime)^\prime$ and $H(\tau)=(\sigma_{1,\tau}^{-1}H_{1}^\prime(\tau),\sigma_{2,\tau}^{-1}H_{2}^\prime(\tau))^\prime$. Note that $E[\psi_{\tau}(\eta_{t,\tau})]=0$ and $\var[\psi_{\tau}(\eta_{t,\tau})]=\tau-\tau^2$. 
These together with the law of iterated expectations, we complete the proof by the central limit theorem and the Cram\'{e}r-Wold device. 
\end{proof}


The following two preliminary lemmas are used to prove Theorem \ref{thm2}.
Specifically, Lemma \ref{lem1} verifies the stochastic differentiability condition defined by
\cite{Pollard1985}, and the bracketing method in \cite{Pollard1985} is used for its proof. Lemma \ref{lem2} is used to obtain the $\sqrt{n}$-consistency and
the asymptotic normality of $\widehat{\bm\theta}_{\tau n}$, and its proof needs Lemma \ref{lem1}.

\begin{lemma}\label{lem1}
	Under Assumptions \ref{assum2}-\ref{assum4}, then for $\bm\theta_{\tau}-\bm\theta_{\tau 0}=o_p(1)$, it holds that	
	\begin{align*}
	\zeta_n(\bm\theta_{\tau})=o_p(\sqrt{n}\|\bm\theta_{\tau}-\bm\theta_{\tau 0}\|+n\|\bm\theta_{\tau}-\bm\theta_{\tau 0}\|^2),
	\end{align*}
	where $\zeta_n(\bm\theta_{\tau})=\sum_{t=p+1}^{n}\omega_tq_{1t}(\bm\theta_{\tau})\left\{\xi_{1t}(\bm\theta_{\tau})-E[\xi_{1t}(\bm\theta_{\tau})|\mathcal{F}_{t-1}]\right\}$ with $q_{1t}(\bm\theta_{\tau})=(\bm\theta_{\tau}-\bm\theta_{\tau 0})^{\prime}\dot{q}_t(\bm\theta_{\tau 0})$, and
    \begin{align*}
	\xi_{1t}(\bm\theta_{\tau})&=\int_{0}^{1}\left[I(y_t\leq F_{t-1}^{-1}(\tau)+q_{1t}(\bm\theta_{\tau})s)-I(y_t\leq F_{t-1}^{-1}(\tau))\right]ds.
    \end{align*}
\end{lemma}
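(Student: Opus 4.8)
The plan is to establish a version of the bound that is uniform over shrinking neighbourhoods of $\bm\theta_{\tau 0}$ and then specialise it to the random sequence $\bm\theta_\tau$, following the bracketing method of \cite{Pollard1985} as implemented in \cite{Zhu_Ling2011} and \cite{Zhu_Zheng_Li2018}. Writing $\bm u=\bm\theta_\tau-\bm\theta_{\tau 0}$, it suffices to show that for every $\epsilon>0$,
\[
\lim_{\delta\to0}\ \limsup_{n\to\infty}\ P\!\left(\sup_{0<\|\bm u\|\le\delta}\frac{|\zeta_n(\bm\theta_{\tau 0}+\bm u)|}{\sqrt n\,\|\bm u\|+n\|\bm u\|^2}>\epsilon\right)=0,
\]
because $\bm\theta_\tau-\bm\theta_{\tau 0}=o_p(1)$ places $\bm\theta_\tau$ inside $\{\|\bm u\|\le\delta\}$ with probability tending to one. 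I would slice $\{\|\bm u\|\le\delta\}$ into the shells $S_j=\{\bm u:\delta 2^{-j-1}<\|\bm u\|\le\delta 2^{-j}\}$, $j\ge0$, on which the denominator is at least $\tfrac12\sqrt n\,\delta 2^{-j}$, and bound $\sup_{\bm u\in S_j}|\zeta_n(\bm\theta_{\tau 0}+\bm u)|$ shell by shell.

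Two structural facts drive the argument. First, since $\omega_t$ and $q_{1t}(\bm\theta_\tau)=(\bm\theta_\tau-\bm\theta_{\tau 0})^{\prime}\dot{q}_t(\bm\theta_{\tau 0})$ are $\mathcal{F}_{t-1}$-measurable, each summand of $\zeta_n$ is a martingale difference for $\{\mathcal{F}_t\}$. Second, because $I(y_t\le F_{t-1}^{-1}(\tau)+q_{1t}s)-I(y_t\le F_{t-1}^{-1}(\tau))$ is monotone in $s$ and vanishes unless $y_t$ lies between $F_{t-1}^{-1}(\tau)$ and $F_{t-1}^{-1}(\tau)+q_{1t}$, one has $|\xi_{1t}(\bm\theta_\tau)|\le I(|y_t-F_{t-1}^{-1}(\tau)|\le|q_{1t}(\bm\theta_\tau)|)$, so (using also $|\xi_{1t}|\le1$) Assumption \ref{assum4} gives
\[
E\bigl[\xi_{1t}^2(\bm\theta_\tau)\mid\mathcal{F}_{t-1}\bigr]\le E\bigl[I(|y_t-F_{t-1}^{-1}(\tau)|\le|q_{1t}(\bm\theta_\tau)|)\mid\mathcal{F}_{t-1}\bigr]\le C\,|q_{1t}(\bm\theta_\tau)|.
\]
Moreover $\min\{|b_0(\tau)|,|\beta_{10}(\tau)|,\dots,|\beta_{p0}(\tau)|\}\ge C$ forces $h_t(\bm\theta_{\tau 0})$ to be bounded away from zero, with $|h_t(\bm\theta_{\tau 0})|^{-1/2}y_{t-j}^2\le C(1+\|\bm y_{1,t-1}\|)$, hence $\|\dot{q}_t(\bm\theta_{\tau 0})\|\le C(1+\|\bm y_{1,t-1}\|)$; combining these bounds, the conditional variance of the $t$th summand of $\zeta_n(\bm\theta_{\tau 0}+\bm u)$ is at most $C\|\bm u\|^3\,\omega_t^2(1+\|\bm y_{1,t-1}\|)^3$, so by Assumption \ref{assum3} and the ergodic theorem its predictable quadratic variation is $O_p(n\|\bm u\|^3)$.

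To upgrade this pointwise bound to one that is uniform over $S_j$, I would cover $S_j$ by finitely many balls of small radius, sandwich $\xi_{1t}(\bm\theta_{\tau 0}+\bm u)$ on each ball between its values at the centre plus an oscillation term that, by monotonicity of $x\mapsto I(y_t\le x)$, is dominated by $\omega_t|q_{1t}|$ times the indicator that $y_t$ lies in a strip whose width is proportional to the ball radius — a term whose conditional mean is controlled by the bounded density — and then handle the martingale part on each ball by Doob's maximal inequality together with a union bound over the cover. This yields $\sup_{\bm u\in S_j}|\zeta_n(\bm\theta_{\tau 0}+\bm u)|=O_p\bigl((n(\delta 2^{-j})^3)^{1/2}\bigr)$, up to logarithmic factors and uniformly in $j$; dividing by the lower bound $\tfrac12\sqrt n\,\delta 2^{-j}$ for the denominator on $S_j$ leaves a ratio of order $(\delta 2^{-j})^{1/2}$, and summing the resulting geometric series over $j\ge0$ gives an overall bound of order $\sqrt\delta$, which is below $\epsilon$ once $\delta$ is small. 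The term $n\|\bm u\|^2$ in the denominator is what keeps the argument valid on the outer shells with $\delta 2^{-j}\gg n^{-1/2}$.

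The main obstacle is the one generic to Pollard-type stochastic-differentiability arguments: $\xi_{1t}$ is a difference of indicator functions and hence discontinuous in $\bm\theta_\tau$, which rules out an ordinary Taylor expansion, so one must exploit the monotonicity of $x\mapsto I(y_t\le x)$ both to build the brackets and to obtain the conditional-moment and oscillation bounds, and then carry the self-weights $\omega_t$ through the moment bookkeeping so that only $E[w_t\|\bm y_{1,t-1}\|^3]<\infty$ (Assumption \ref{assum3}), rather than any moment of $y_t$ itself, is required. The constraint $\min\{|b_0(\tau)|,|\beta_{j0}(\tau)|\}\ge C$ is essential precisely here, since it is what makes $\|\dot{q}_t(\bm\theta_{\tau 0})\|$ grow only like $1+\|\bm y_{1,t-1}\|$, keeping the relevant weighted moments finite.
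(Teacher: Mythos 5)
Your skeleton---reduction to a uniform bound over a shrinking ball, dyadic shells, the martingale-difference structure with $E[\xi_{1t}^2\mid\mathcal{F}_{t-1}]\le C|q_{1t}(\bm\theta_\tau)|$ from the bounded conditional density, covers with brackets built from monotonicity of the indicator, and the self-weights so that only $E[w_t\|\bm y_{1,t-1}\|^3]<\infty$ enters---is exactly the paper's Pollard-type argument. But the uniformization step, as written, has a gap: you in effect bracket the summand $\omega_t q_{1t}(\bm\theta_\tau)\{\xi_{1t}(\bm\theta_\tau)-E[\xi_{1t}(\bm\theta_\tau)\mid\mathcal{F}_{t-1}]\}$ directly over each covering ball, yet the factor $\omega_t q_{1t}(\bm\theta_\tau)$ both varies with $\bm u$ and changes sign across $t$, so a sandwich of $\xi_{1t}$ does not transfer to an order-preserving bracket of the summand, and the oscillation of this weight over a ball whose radius is proportional to the shell radius produces a drift of order $n\|\bm u\|^2$ (times the covering fraction), not $o(\sqrt n\|\bm u\|)$; shrinking the ball radius with $n$ to kill this term would make the cover cardinality, and hence the union bound, blow up. The paper removes the obstruction before bracketing: it first bounds $|\zeta_n(\bm\theta_\tau)|\le\sqrt n\|\bm u\|\sum_{j}\bigl|n^{-1/2}\sum_t m_{t,j}\{\xi_{1t}-E[\xi_{1t}\mid\mathcal{F}_{t-1}]\}\bigr|$ with the \emph{fixed} weights $m_{t,j}=\omega_t\partial q_t(\bm\theta_{\tau0})/\partial\theta_{\tau,j}$, split into nonnegative parts $g_t$, so the only $\bm u$-dependence left sits inside the monotone indicator and the cover can have radius proportional to the shell radius with an $n$-free cardinality $K(\epsilon)$.

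Relatedly, your shell-wise claim $\sup_{S_j}|\zeta_n|=O_p\bigl((n(\delta2^{-j})^3)^{1/2}\bigr)$ is not what this method delivers: the price of uniformity is the conditional-mean width of the brackets, which on a shell of radius $r$ is of order $\epsilon n r^2$ (the paper's bound \eqref{uncond} together with the event $E_n$), and this dominates $(nr^3)^{1/2}$ whenever $r\gg n^{-1/2}$. Dividing only by $\tfrac12\sqrt n\,\delta2^{-j}$ therefore does not give a ratio of order $(\delta2^{-j})^{1/2}$ on the outer shells, and the geometric-series conclusion of order $\sqrt\delta$ fails. The repair is precisely the $n\|\bm u\|^2$ part of the target rate---equivalently Pollard's normalization $1+\sqrt n\|\bm u\|$ used in the paper---which absorbs the drift and leaves a normalized supremum of order $\epsilon+\sqrt\delta$; you point to this role of $n\|\bm u\|^2$ in your final sentence but never use it in the computation. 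With the weight decoupling and sign splitting of the first point, and with the drift charged against $n\|\bm u\|^2$, your argument becomes the paper's proof (your Doob-plus-union-bound step can be replaced by the paper's per-bracket Chebyshev bound, and your double limit in $\delta$ is a slightly weaker, but still sufficient, form of the paper's fixed-$\delta$ statement).
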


\begin{proof}
Note that
\begin{align*}
|\zeta_n(\bm\theta_{\tau})| \leq \sqrt{n}\|\bm\theta_{\tau}-\bm\theta_{\tau 0}\|\sum_{j=1}^{2p+1}\left|\dfrac{1}{\sqrt{n}}\sum_{t=p+1}^{n}m_{t,j}\left\{\xi_{1t}(\bm\theta_{\tau})-E[\xi_{1t}(\bm\theta_{\tau})|\mathcal{F}_{t-1}]\right\}\right|,
\end{align*}
where $m_{t,j}=\omega_t\partial q_t(\bm\theta_{\tau 0})/\partial \theta_{\tau, j}$ with $\theta_{\tau, j}$ being the $j$th element of $\bm\theta_{\tau}$. For $1\leq j\leq 2p+1$, define $g_t=\max_{j}\{m_{t,j},0\}$ or $g_t=\max_j\{-m_{t,j},0\}$. Denote $\bm u=\bm\theta_{\tau}-\bm\theta_{\tau 0}$. Let $f_t(\bm u)=g_t\xi_{1t}(\bm\theta_{\tau})$ and define
\begin{align*}
D_n(\bm u)=\dfrac{1}{\sqrt{n}}\sum_{t=p+1}^{n}\left\{f_t(\bm u)-E\left[f_t(\bm u)|\mathcal{F}_{t-1}\right]\right\}.
\end{align*}
To establish Lemma \ref{lem1}, it suffices to show that, for any $\delta>0$,
\begin{align}\label{diffcond}
\sup_{\|\bm u\|\leq \delta}\dfrac{|D_n(\bm u)|}{1+\sqrt{n}\|\bm u\|}=o_p(1).
\end{align}

We follow the method in Lemma 4 of \cite{Pollard1985} to verify \eqref{diffcond}.
Let $\mathfrak{F}=\{f_t(\bm u): \|\bm u\|\leq \delta\}$ be a collection of functions indexed by $\bm u$. First, we verify that $\mathfrak{F}$ satisfies the bracketing condition defined on page 304 of \cite{Pollard1985}. 	
Let $B_{r}(\bm v)$ be an open neighborhood of $\bm v$ with radius $r>0$, and define a constant $C_0$ to be selected later. For any $\epsilon>0$ and $0< r\leq \delta$, there exists a sequence of small cubes $\{B_{\epsilon r/C_0}(\bm u_{i})\}_{i=1}^{K(\epsilon)}$ to cover $B_r(\bm 0)$, where $K(\epsilon)$ is an integer less than $C\epsilon^{-(2p+1)}$, and the constant $C$ is not depending on $\epsilon$ and $r$; see \cite{Huber1967}, page 227.
Denote $V_i(r)=B_{\epsilon r/C_0}(\bm u_{i})\bigcap B_r(\bm0)$, and let $U_1(r)=V_1(r)$ and $U_i(r)=V_i(r)-\bigcup_{j=1}^{i-1}V_j(r)$ for $i\geq 2$. Note that $\{U_i(r)\}_{i=1}^{K(\epsilon)}$ is a partition of $B_r(\bm0)$.
For each $\bm u_i\in U_i(r)$ with $1\leq i \leq K(\epsilon)$, define the following bracketing functions	
\begin{align*}
f_t^L(\bm u_i)&= g_t\int_{0}^{1}\left[I\left(y_t\leq F_{t-1}^{-1}(\tau)+\bm u_i^{\prime}\dot{q}_t(\bm\theta_{\tau 0})s-\dfrac{\epsilon r}{C_0}\|\dot{q}_t(\bm\theta_{\tau 0})\|\right)-I(y_t\leq F_{t-1}^{-1}(\tau))\right]ds, \\
f_t^U(\bm u_i)&= g_t\int_{0}^{1}\left[I\left(y_t\leq F_{t-1}^{-1}(\tau)+\bm u_i^{\prime}\dot{q}_t(\bm\theta_{\tau 0})s+\dfrac{\epsilon r}{C_0}\|\dot{q}_t(\bm\theta_{\tau 0})\|\right)-I(y_t\leq F_{t-1}^{-1}(\tau))\right]ds.
\end{align*}
Since the indicator function $I(\cdot)$ is non-decreasing and $g_t\geq 0$, for any $\bm u \in U_i(r)$, we have
\begin{align}\label{brac1}
f_t^L(\bm u_i)\leq f_t(\bm u)\leq f_t^U(\bm u_i).
\end{align}
Furthermore, by Taylor expansion, it holds that
\begin{align}\label{uncond}
E\left[f_t^U(\bm u_i)-f_i^L(\bm u_i)|\mathcal{F}_{t-1}\right]\leq \dfrac{\epsilon r}{C_0}\cdot2\sup_{x}f_{t-1}(x) \omega_t\left\|\dot{q}_t(\bm\theta_{\tau 0})\right\|^2.
\end{align}
Denote $\Delta_t=2\sup_{x}f_{t-1}(x)\omega_t\left\|\dot{q}_t(\bm\theta_{\tau 0})\right\|^2$. By Assumption \ref{assum4}, we have $\sup_{x}f_{t-1}(x)<\infty$. Choose $C_0=E(\Delta_t)$. Then by iterated-expectation and Assumption \ref{assum3}, it follows that
\begin{align*}
E\left[f_t^U(\bm u_i)-f_t^L(\bm u_i)\right]=E\left\{E\left[f_t^U(\bm\theta_i)-f_t^L(\bm\theta_i)|\mathcal{F}_{t-1}\right]\right\}\leq \epsilon r.
\end{align*}	
This together with \eqref{brac1}, implies that the family $\mathfrak{F}$ satisfies the bracketing condition.	

Put $r_k=2^{-k}\delta$. Let $B(k)=B_{r_k}(\bm0)$ and $A(k)$ be the annulus $B(k)\setminus B(k+1)$. From the bracketing condition, for fixed $\epsilon>0$, there is a partition $U_1(r_k), U_2(r_k), \ldots, U_{K(\epsilon)}(r_k)$ of $B(k)$. First, consider the upper tail case. For $\bm u \in U_i(r_k)$, by \eqref{uncond}, it holds that
\begin{align}\label{upper}
D_n(\bm u) \leq &\dfrac{1}{\sqrt{n}}\sum_{t=p+1}^{n}\left\{f_t^U(\bm u_i)-E\left[f_t^U(\bm u_i)|\mathcal{F}_{t-1}\right]\right\}+\dfrac{1}{\sqrt{n}}\sum_{t=p+1}^{n}E\left[f_t^U(\bm u_i)-f_t^L(\bm u_i)|\mathcal{F}_{t-1}\right] \nonumber \\
\leq & D_n^U(\bm u_i)+\sqrt{n}\epsilon r_k\dfrac{1}{nC_0}\sum_{t=p+1}^{n}\Delta_t,
\end{align}	
where \[D_n^U(\bm u_i)=\dfrac{1}{\sqrt{n}}\sum_{t=p+1}^{n}\left\{f_t^U(\bm u_i)-E\left[f_t^U(\bm u_i)|\mathcal{F}_{t-1}\right]\right\}.\]
Define the event
\begin{align*}
E_n=\left\{\omega: \dfrac{1}{nC_0}\sum_{t=p+1}^{n}\Delta_t(\omega) < 2 \right\}.
\end{align*}

For $\bm u \in A(k)$, $1+\sqrt{n}\|\bm u\|>\sqrt{n}r_{k+1}=\sqrt{n}r_{k}/2$. Then by \eqref{upper} and the Chebyshev's inequality, we have
\begin{align}\label{Ak0}
 \text{P}\left(\sup_{\bm u \in A(k)}\dfrac{D_n(\bm u)}{1+\sqrt{n}\|\bm u\|}>6\epsilon, E_n\right)
\leq & \text{P}\left(\max_{1 \leq i \leq K(\epsilon)}\sup_{\bm u \in U_i(r_k) \cap A(k)}D_n(\bm u)>3\sqrt{n}\epsilon r_k, E_n\right) \nonumber\\
\leq & K(\epsilon)\max_{1 \leq i \leq K(\epsilon)}\text{P}\left(D_n^U(\bm u_i)>\sqrt{n}\epsilon r_k\right) \nonumber\\
\leq & K(\epsilon)\max_{1 \leq i \leq K(\epsilon)}\dfrac{E\{[D_n^U(\bm u_i)]^2\}}{n\epsilon^2 r_k^2}.
\end{align}
Moreover, by iterated-expectation, Taylor expansion, Assumption \ref{assum4} and $\|\bm u_i\|\leq r_k$ for $\bm u_i \in U_i(r_k)$, we have
\begin{align*}	
& E\left\{[f_t^U(\bm u_i)]^2\right\}
= E\left\{E\left\{[f_t^U(\bm u_i)]^2|\mathcal{F}_{t-1}\right\}\right\} \nonumber \\
\leq & 2E\left\{g_t^2\left| \int_{0}^{1}\left[F_{t-1}\left(F_{t-1}^{-1}(\tau)+\bm u_i^{\prime}\dot{q}_t(\bm\theta_{\tau 0})s+\dfrac{\epsilon r_k}{C_0}\|\dot{q}_t(\bm\theta_{\tau 0})\|\right)-F_{t-1}\left(F_{t-1}^{-1}(\tau)\right)\right]ds\right|\right\} \nonumber\\
\leq & C\sup_{x}f_{t-1}(x)r_kE\left[\omega_t^2\left\|\dot{q}_t(\bm\theta_{\tau 0})\right\|^3\right].
\end{align*}
This, together with $E[w_{t}\|\bm y_{1,t-1}\|^3]<\infty$ by Assumption \ref{assum3}, $\sup_{x}f_{t-1}(x)<\infty$ by Assumption \ref{assum4} and the fact that $f_t^U(\bm u_i)-E[f_t^U(\bm u_i)|\mathcal{F}_{t-1}]$ is a martingale difference sequence, implies that
\begin{align}\label{ED}
E\{[D_n^U(\bm u_i)]^2\}&=\dfrac{1}{n}\sum_{t=p+1}^{n}E\{\{f_t^U(\bm u_i)-E[f_t^U(\bm u_i)|\mathcal{F}_{t-1}]\}^2\} \nonumber \\
&\leq \dfrac{1}{n}\sum_{t=p+1}^{n}E\{[f_t^U(\bm u_i)]^2\} \nonumber \\
&\leq \dfrac{Cr_k}{n}\sum_{t=p+1}^{n}E\left[\omega_t^2\left\|\dot{q}_t(\bm\theta_{\tau 0})\right\|^3\right]:=\Delta(r_k).
\end{align}
Combining \eqref{Ak0} and \eqref{ED}, we have
\begin{align*}
\text{P}\left(\sup_{\bm u \in A(k)}\dfrac{D_n(\bm u)}{1+\sqrt{n}\|\bm u\|}>6\epsilon, E_n\right)
\leq \dfrac{K(\epsilon)\Delta(r_k)}{n\epsilon^2r_k^2}.
\end{align*}
Similar to the proof of the upper tail case, we can obtain the same bound for the lower tail case. Therefore,
\begin{align}\label{Ak}
& \text{P}\left(\sup_{\bm u \in A(k)}\dfrac{|D_n(\bm u)|}{1+\sqrt{n}\|\bm u\|}>6\epsilon, E_n\right) \leq \dfrac{2K(\epsilon)\Delta(r_k)}{n\epsilon^2r_k^2}.
\end{align}

Note that $\Delta(r_k)\to 0$ as $k\to \infty$, we can choose $k_{\epsilon}$ such that $2K(\epsilon)\Delta(r_k)/(\epsilon^2\delta^2)<\epsilon$ for $k\geq k_{\epsilon}$. Let $k_n$ be the integer such that $n^{-1/2}\delta \leq r_{k_n} \leq 2n^{-1/2}\delta$, and split $B_{\delta}(\bm 0)$ into two events $B:=B(k_n+1)$ and $B^c:=B(0)-B(k_n+1)$. Note that $B^c=\bigcup_{k=0}^{k_n}A(k)$ and $\Delta(r_k)$ is bounded by Assumption \ref{assum3}. Then by \eqref{Ak}, it holds that
\begin{align}\label{Bc}
\text{P}\left(\sup_{\bm u \in B^c}\dfrac{|D_n(\bm u)|}{1+\sqrt{n}\|\bm u\|}>6\epsilon\right)
\leq & \sum_{k=0}^{k_n}\text{P}\left(\sup_{\bm u \in A(k)}\dfrac{|D_n(\bm u)|}{1+\sqrt{n}\|\bm u\|}>6\epsilon, E_n\right) + \text{P}(E_n^c)\nonumber \\
\leq & \dfrac{1}{n}\sum_{k=0}^{k_{\epsilon}-1}\dfrac{CK(\epsilon)}{\epsilon^2\delta^2}2^{2k}+ \dfrac{\epsilon}{n}\sum_{k=k_{\epsilon}}^{k_n}2^{2k}+ \text{P}(E_n^c) \nonumber \\
\leq & O\left(\dfrac{1}{n}\right) + 4\epsilon + \text{P}(E_n^c).
\end{align}

Furthermore, for $\bm u \in B$, we have $1+\sqrt{n}\|\bm u\|\geq 1$ and $r_{k_n+1}\leq n^{-1/2}\delta<n^{-1/2}$. Similar to the proof of \eqref{Ak0} and \eqref{ED}, we can show that
\begin{align*}
\text{P}\left(\sup_{\bm u \in B}\dfrac{D_n(\bm u)}{1+\sqrt{n}\|\bm u\|}>3\epsilon, E_n\right) \leq \text{P}\left(\max_{1 \leq i \leq K(\epsilon)}D_n^U(\bm u_i)>\epsilon, E_n\right) \leq \dfrac{K(\epsilon)\Delta(r_{k_n+1})}{\epsilon^2}.
\end{align*}
We can obtain the same bound for the lower tail. Therefore, we have
\begin{align}\label{B}
\text{P}\left(\sup_{\bm u \in B}\dfrac{|D_n(\bm u)|}{1+\sqrt{n}\|\bm u\|}>3\epsilon\right)
= &  \text{P}\left(\sup_{\bm u \in B}\dfrac{|D_n(\bm u)|}{1+\sqrt{n}\|\bm u\|}>3\epsilon, E_n\right)+ \text{P}(E_n^c) \nonumber \\
\leq & \dfrac{2K(\epsilon)\Delta(r_{k_n+1})}{\epsilon^2} + \text{P}(E_n^c).
\end{align}
Note that $\Delta(r_{k_n+1})\to 0$ as $n\to \infty$. Moreover, by the ergodic theorem, $\text{P}(E_n)\rightarrow 1$ and thus $\text{P}(E_n^c)\rightarrow 0$ as $n\rightarrow \infty$. \eqref{B} together with \eqref{Bc} asserts \eqref{diffcond}. The proof of this lemma is accomplished.	
\end{proof}

\begin{lemma}\label{lem2}
	Suppose that Assumptions \ref{assum2}-\ref{assum4} hold, then
	\begin{align*}
	n[L_n(\bm\theta_{\tau})-L_n(\bm\theta_{\tau 0})]=&-\sqrt{n}(\bm\theta_{\tau}-\bm\theta_{\tau 0})^{\prime}\bm T_n+\sqrt{n}(\bm\theta_{\tau}-\bm\theta_{\tau 0})^{\prime}J_n\sqrt{n}(\bm\theta_{\tau}-\bm\theta_{\tau 0}) \\
	&+o_p(\sqrt{n}\|\bm\theta_{\tau}-\bm\theta_{\tau 0}\|+n\|\bm\theta_{\tau}-\bm\theta_{\tau 0}\|^2)
	\end{align*}
	for $\bm\theta_{\tau}-\bm\theta_{\tau 0}=o_p(1)$, where $L_n(\bm\theta_{\tau})=n^{-1}\sum_{t=p+1}^{n}\omega_t\rho_{\tau}(y_t-q_t(\bm\theta_{\tau}))$, and	
	\[\bm T_n=\dfrac{1}{\sqrt{n}}\sum_{t=p+1}^{n}\omega_t\dot{q}_t(\bm\theta_{\tau 0})\psi_{\tau}(\eta_{t,\tau}) \hspace{2mm}\text{and}\hspace{2mm} J_n=\dfrac{1}{2n}\sum_{t=p+1}^{n}f_{t-1}(F_{t-1}^{-1}(\tau))\omega_t\dot{q}_t(\bm\theta_{\tau 0})\dot{q}_t^{\prime}(\bm\theta_{\tau 0})\]
	with $\eta_{t,\tau}=y_t-q_t(\bm\theta_{\tau 0})$.	
\end{lemma}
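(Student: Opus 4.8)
The plan is to expand the self-weighted check-function increment via Knight's identity \eqref{identity}, identify the linear and quadratic terms in $\bm u:=\bm\theta_{\tau}-\bm\theta_{\tau 0}$, and absorb everything else into the stated remainder, leaning on Lemma \ref{lem1} for the only delicate piece. First I would set $\nu_t(\bm\theta_{\tau})=q_t(\bm\theta_{\tau})-q_t(\bm\theta_{\tau 0})$, so that $y_t-q_t(\bm\theta_{\tau})=\eta_{t,\tau}-\nu_t(\bm\theta_{\tau})$, apply \eqref{identity} with $x=\eta_{t,\tau}$ and $y=\nu_t(\bm\theta_{\tau})$, multiply by $\omega_t\ge 0$ and sum to obtain
\begin{align*}
n[L_n(\bm\theta_{\tau})-L_n(\bm\theta_{\tau 0})]
=&-\sum_{t=p+1}^{n}\omega_t\nu_t(\bm\theta_{\tau})\psi_{\tau}(\eta_{t,\tau})\\
&+\sum_{t=p+1}^{n}\omega_t\nu_t(\bm\theta_{\tau})\int_{0}^{1}\big[I(\eta_{t,\tau}\le\nu_t(\bm\theta_{\tau})s)-I(\eta_{t,\tau}\le 0)\big]\,ds.
\end{align*}
Since $\min\{|b(\tau)|,|\beta_1(\tau)|,\ldots,|\beta_p(\tau)|\}$ is bounded away from zero on $\Theta$, the quantity $h_t(\bm\theta_{\tau})$ stays away from zero, $q_t(\cdot)$ is twice continuously differentiable, and $\|\dot q_t(\bm\theta_{\tau})\|+\|\ddot q_t(\bm\theta_{\tau})\|\le C(1+\|\bm y_{1,t-1}\|)$ uniformly on $\Theta$; I would then Taylor-expand $\nu_t(\bm\theta_{\tau})=q_{1t}(\bm\theta_{\tau})+R_t$ with $q_{1t}(\bm\theta_{\tau})=\bm u^{\prime}\dot q_t(\bm\theta_{\tau 0})$ and $|R_t|\le C\|\bm u\|^2(1+\|\bm y_{1,t-1}\|)$.

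For the first sum I would split $\nu_t=q_{1t}+R_t$. The $q_{1t}$ part equals $-\bm u^{\prime}\sum_{t}\omega_t\dot q_t(\bm\theta_{\tau 0})\psi_{\tau}(\eta_{t,\tau})=-\sqrt n\,\bm u^{\prime}\bm T_n$ exactly, by definition of $\bm T_n$. For the $R_t$ part, note that for each fixed $\bm u$ it is a sum of martingale differences, since $E[\psi_{\tau}(\eta_{t,\tau})|\mathcal{F}_{t-1}]=0$ and $\omega_t R_t$ is $\mathcal{F}_{t-1}$-measurable, so its conditional variance is $O_p(n\|\bm u\|^4)$ by the ergodic theorem and $E[\omega_t^2(1+\|\bm y_{1,t-1}\|^2)]<\infty$ under Assumption \ref{assum3}; I would then pass to a uniform bound over $\|\bm u\|\le\delta$ by a covering argument together with a martingale maximal inequality, analogous to the treatment of Lemma \ref{lem1}, obtaining $\sup_{\|\bm u\|\le\delta}|\sum_t\omega_t R_t\psi_{\tau}(\eta_{t,\tau})|=o_p(\sqrt n\|\bm u\|+n\|\bm u\|^2)$. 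Hence the first sum contributes $-\sqrt n\,\bm u^{\prime}\bm T_n+o_p(\sqrt n\|\bm u\|+n\|\bm u\|^2)$.

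For the second sum I would first replace $\nu_t$ by $q_{1t}$ inside the indicators; the replacement error is negligible because the integrand then changes only on an event of conditional probability $O(\sup_x f_{t-1}(x)\,|R_t|)$, using the boundedness of $f_{t-1}$ from Assumption \ref{assum4}, and this contributes $O_p(n\|\bm u\|^3)=o_p(n\|\bm u\|^2)$, while by $q_t(\bm\theta_{\tau 0})=F_{t-1}^{-1}(\tau)$ the remaining sum is $\sum_t\omega_t q_{1t}\xi_{1t}(\bm\theta_{\tau})$ with $\xi_{1t}$ as in Lemma \ref{lem1}. Centering, $\sum_t\omega_t q_{1t}\xi_{1t}=\zeta_n(\bm\theta_{\tau})+\sum_t\omega_t q_{1t}E[\xi_{1t}|\mathcal{F}_{t-1}]$, where $\zeta_n(\bm\theta_{\tau})=o_p(\sqrt n\|\bm u\|+n\|\bm u\|^2)$ by Lemma \ref{lem1}. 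A Taylor expansion of $F_{t-1}$ at $F_{t-1}^{-1}(\tau)$ gives $E[\xi_{1t}|\mathcal{F}_{t-1}]=\int_0^1[F_{t-1}(F_{t-1}^{-1}(\tau)+q_{1t}s)-\tau]\,ds=\frac{1}{2}f_{t-1}(F_{t-1}^{-1}(\tau))q_{1t}+O(\sup_x|\dot f_{t-1}(x)|\,q_{1t}^2)$, so that
\begin{align*}
\sum_{t=p+1}^{n}\omega_t q_{1t}E[\xi_{1t}|\mathcal{F}_{t-1}]
&=\frac{1}{2}\sum_{t=p+1}^{n}\omega_t f_{t-1}(F_{t-1}^{-1}(\tau))q_{1t}^2+O\Big(\sum_{t=p+1}^{n}\omega_t|q_{1t}|^3\Big)\\
&=\sqrt n\,\bm u^{\prime}J_n\sqrt n\,\bm u+o_p(n\|\bm u\|^2),
\end{align*}
where the quadratic term is identified using $q_{1t}^2=\bm u^{\prime}\dot q_t(\bm\theta_{\tau 0})\dot q_t^{\prime}(\bm\theta_{\tau 0})\bm u$ and the definition of $J_n$, and the cubic remainder is $O_p(n\|\bm u\|^3)=o_p(n\|\bm u\|^2)$ by the ergodic theorem, $E[\omega_t\|\dot q_t(\bm\theta_{\tau 0})\|^3]<\infty$ under Assumption \ref{assum3}, and $\|\bm u\|=o_p(1)$. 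Collecting the three contributions delivers the stated expansion.

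The hard part is the control of the centered empirical process $\zeta_n(\bm\theta_{\tau})$, i.e.\ the stochastic-differentiability condition of \cite{Pollard1985}: since $\rho_{\tau}(\cdot)$ is non-differentiable and $q_t(\cdot)$ is non-convex in $\bm\theta_{\tau}$ through $S_{\textrm{Q}}$, no convexity argument is available, which is exactly why that step is isolated into Lemma \ref{lem1} and handled by the bracketing method of \cite{Pollard1985} as in \cite{Zhu_Ling2011}. The remaining work, of a more routine flavour, is that because $\bm\theta_{\tau}$ is only assumed random with $\bm\theta_{\tau}-\bm\theta_{\tau 0}=o_p(1)$, every term generated by linearizing $\nu_t$ to $q_{1t}$ must be shown to be $o_p(\sqrt n\|\bm u\|+n\|\bm u\|^2)$ uniformly over a shrinking neighbourhood of $\bm\theta_{\tau 0}$; the self-weights $\omega_t$ with their finite weighted third moment (Assumption \ref{assum3}), together with the lower bound on $|h_t(\bm\theta_{\tau})|$ that keeps the factors in $\dot q_t$ and $\ddot q_t$ of order $1+\|\bm y_{1,t-1}\|$, are precisely what make all of these bounds go through.
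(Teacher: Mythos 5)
Your proposal is correct and follows essentially the same route as the paper's proof: Knight's identity to split the increment, Lemma \ref{lem1} for the centered indicator process, a Taylor expansion of $F_{t-1}$ at $F_{t-1}^{-1}(\tau)$ to produce $J_n$, and Assumptions \ref{assum3}--\ref{assum4} (plus the lower bound on $|h_t(\bm\theta_\tau)|$) to absorb the remaining terms into $o_p(\sqrt{n}\|\bm u\|+n\|\bm u\|^2)$. Two minor points of difference: the paper bounds the quadratic remainder $\sum_t\omega_t\bm u^{\prime}\ddot q_t(\bm\theta_{\tau}^{*})\bm u\,\psi_{\tau}(\eta_{t,\tau})$ via a uniform law of large numbers \citep{Ling_McAleer2003} over the compact parameter space rather than your martingale-variance/covering argument, and in your treatment of the second Knight term you should retain the cross term $\sum_t\omega_t q_{2t}(\bm u)\xi_{1t}(\bm u)$ left after replacing the outer factor $\nu_t$ by $q_{1t}$ (the paper's $\Pi_{2n}$ piece of $R_{5n}$), which is $o_p(n\|\bm u\|^2)$ by exactly the conditional-density bound you already invoke for the indicator replacement.
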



\begin{proof}
Denote $\bm u=\bm\theta_{\tau}-\bm\theta_{\tau 0}$. Let $\nu_t(\bm u)=q_t(\bm\theta_{\tau})-q_t(\bm\theta_{\tau 0})$,
and define the function
$\xi_t(\bm u)=\int_{0}^{1}\left[I(y_{t}\leq F_{t-1}^{-1}(\tau)+\nu_t(\bm u)s)-I(y_{t}\leq F_{t-1}^{-1}(\tau))\right]ds.$
Recall that $L_n(\bm\theta_{\tau})=n^{-1}\sum_{t=p+1}^{n}\omega_t\rho_{\tau}(y_t-q_t(\bm\theta_{\tau}))$ and $q_t(\bm\theta_{\tau 0})=F_{t-1}^{-1}(\tau)$.
By the Knight identity \eqref{identity}, it can be verified that
\begin{align}\label{Gnrep}
n[L_n(\bm\theta_{\tau})-L_n(\bm\theta_{\tau 0})] =&\sum_{t=p+1}^{n}\omega_t\left[\rho_{\tau}\left(\eta_{t,\tau}-\nu_t(\bm u)\right)-\rho_{\tau}\left(\eta_{t,\tau}\right)\right] \nonumber \\
=& K_{1n}(\bm u)+K_{2n}(\bm u),
\end{align}	
where
\begin{align*}
K_{1n}(\bm u)=-\sum_{t=p+1}^{n}\omega_t\nu_t(\bm u)\psi_{\tau}(\eta_{t,\tau}) \hspace{2mm}\text{and}\hspace{2mm}  K_{2n}(\bm u)=\sum_{t=p+1}^{n}\omega_t\nu_t(\bm u)\xi_t(\bm u).
\end{align*}
By Taylor expansion, we have $\nu_t(\bm u)=q_{1t}(\bm u)+q_{2t}(\bm u)$, where $q_{1t}(\bm u)=\bm u^{\prime}\dot{q}_t(\bm\theta_{\tau 0})$ and $q_{2t}(\bm u)=\bm u^{\prime}\ddot{q}_t(\bm\theta_{\tau}^*)\bm u/2$ for $\bm\theta_{\tau}^*$ between $\bm\theta_{\tau}$ and $\bm\theta_{\tau 0}$, and $\ddot{q}_t(\bm\theta_{\tau})$ is defined as \eqref{ddotqt}.
Then it follows that
\begin{align}\label{K1rep}
K_{1n}(\bm u)&=-\sum_{t=p+1}^{n}\omega_tq_{1t}(\bm u)\psi_{\tau}(\eta_{t,\tau})-\sum_{t=p+1}^{n}\omega_tq_{2t}(\bm u)\psi_{\tau}(\eta_{t,\tau}) \nonumber \\
&=-\sqrt{n}\bm u^{\prime}\bm T_n-\sqrt{n}\bm u^{\prime}R_{1n}(\bm\theta_{\tau}^*)\sqrt{n}\bm u,
\end{align}
where
\[\bm T_n=\dfrac{1}{\sqrt{n}}\sum_{t=p+1}^{n}\omega_t\dot{q}_t(\bm\theta_{\tau 0})\psi_{\tau}(\eta_{t,\tau}) \hspace{2mm}\text{and}\hspace{2mm} R_{1n}(\bm\theta_{\tau}^*)=\dfrac{1}{2n}\sum_{t=p+1}^{n}\omega_t\ddot{q}_t(\bm\theta_{\tau}^*)\psi_{\tau}(\eta_{t,\tau}).\]
From $E[w_{t}\|\bm y_{1,t-1}\|^3]<\infty$ by Assumption \ref{assum3} and the fact that $|\psi_{\tau}(\eta_{t,\tau})|\leq 1$, we have
\[E[\sup_{\bm\theta_{\tau}^*\in\Theta_{\tau}}\left\|\omega_t \ddot{q}_t(\bm\theta_{\tau}^*)\psi_{\tau}(\eta_{t,\tau})\right\|]\leq CE[\sup_{\bm\theta_{\tau}^*\in\Theta_{\tau}}\left\|\omega_t \ddot{q}_t(\bm\theta_{\tau}^*)\right\|]<\infty.\]
Moreover, by iterated-expectation and the fact that $E[\psi_{\tau}(\eta_{t,\tau})]=0$, it follows that
\[E\left[\omega_t \ddot{q}_t(\bm\theta_{\tau}^*)\psi_{\tau}(\eta_{t,\tau})\right]=0.\]
Then by Theorem 3.1 in \cite{Ling_McAleer2003}, we can show that
\[\sup_{\bm\theta_{\tau}^*\in\Theta_{\tau}}\|R_{1n}(\bm\theta_{\tau}^*)\|=o_p(1).\]
This together with \eqref{K1rep}, implies that
\begin{align}\label{Kn1}
K_{1n}(\bm u)=-\sqrt{n}\bm u^{\prime}\bm T_n+o_p(n\|\bm u\|^2).
\end{align}
Denote $\xi_{t}(\bm u)=\xi_{1t}(\bm u)+\xi_{2t}(\bm u)$, where
\begin{align*}
\xi_{1t}(\bm u)&=\int_{0}^{1}\left[I(y_{t}\leq F_{t-1}^{-1}(\tau)+q_{1t}(\bm u)s)-I(y_{t}\leq F_{t-1}^{-1}(\tau))\right]ds \hspace{2mm}\text{and}\hspace{2mm} \\
\xi_{2t}(\bm u)&=\int_{0}^{1}\left[I(y_{t}\leq F_{t-1}^{-1}(\tau)+\nu_t(\bm u)s)-I(y_{t}\leq F_{t-1}^{-1}(\tau)+q_{1t}(\bm u)s)\right]ds.
\end{align*}
Then for $K_{2n}(\bm\theta_{\tau})$, it holds that
\begin{align}\label{K2rep}
K_{2n}(\bm u)=R_{2n}(\bm u)+R_{3n}(\bm u)+R_{4n}(\bm u)+R_{5n}(\bm u),
\end{align}
where
\begin{align*}
R_{2n}(\bm u)&=\bm u^{\prime}\sum_{t=p+1}^{n}\omega_t\dot{q}_t(\bm\theta_{\tau 0})E[\xi_{1t}(\bm u)|\mathcal{F}_{t-1}], \\
R_{3n}(\bm u)&=\bm u^{\prime}\sum_{t=p+1}^{n}\omega_t\dot{q}_t(\bm\theta_{\tau 0})\{\xi_{1t}(\bm u)-E[\xi_{1t}(\bm u)|\mathcal{F}_{t-1}]\}, \\
R_{4n}(\bm u)&=\bm u^{\prime}\sum_{t=p+1}^{n}\omega_t\dot{q}_t(\bm\theta_{\tau 0})\xi_{2t}(\bm u) \hspace{2mm}\text{and}\hspace{2mm}
R_{5n}(\bm u)=\dfrac{\bm u^{\prime}}{2}\sum_{t=p+1}^{n}\omega_t\ddot{q}_t(\bm\theta_{\tau}^*)\xi_{t}(\bm u)\bm u.
\end{align*}
Note that
\begin{align}\label{xi1}
E[\xi_{1t}(\bm u)|\mathcal{F}_{t-1}]=\int_{0}^{1}[F_{t-1}(F_{t-1}^{-1}(\tau)+q_{1t}(\bm u)s)-F_{t-1}(F_{t-1}^{-1}(\tau))]ds.
\end{align}
Then by Taylor expansion, together with Assumption \ref{assum4}, it follows that
\begin{align*}
E[\xi_{1t}(\bm u)|\mathcal{F}_{t-1}]=&\dfrac{1}{2}f_{t-1}(F_{t-1}^{-1}(\tau))q_{1t}(\bm u) \\
&+q_{1t}(\bm u)\int_{0}^{1}[f_{t-1}(F_{t-1}^{-1}(\tau)+q_{1t}(\bm u)s^*)-f_{t-1}(F_{t-1}^{-1}(\tau))]sds,
\end{align*}
where $s^*$ is between 0 and $s$. Therefore, it follows that
\begin{align}\label{R2nrep}
R_{2n}(\bm u)=\sqrt{n}\bm u^{\prime}J_n\sqrt{n}\bm u+\sqrt{n}\bm u^{\prime}\Pi_{1n}(\bm u)\sqrt{n}\bm u,
\end{align}
where
$J_n=(2n)^{-1}\sum_{t=p+1}^{n}f_{t-1}(F_{t-1}^{-1}(\tau))\omega_t\dot{q}_t(\bm\theta_{\tau 0})\dot{q}_t^{\prime}(\bm\theta_{\tau 0})$ and
\begin{align*}
\Pi_{1n}(\bm u)&=\dfrac{1}{n}\sum_{t=p+1}^{n}\omega_t\dot{q}_t(\bm\theta_{\tau 0})\dot{q}_t^{\prime}(\bm\theta_{\tau 0})\int_{0}^{1}[f_{t-1}(F_{t-1}^{-1}(\tau)+q_{1t}(\bm u)s^*)-f_{t-1}(F_{t-1}^{-1}(\tau))]sds.
\end{align*}
By Taylor expansion and Assumptions \ref{assum3}-\ref{assum4}, for any $\eta>0$, it holds that
\begin{align*}
E\left(\sup_{\|\bm u\|\leq \eta}\|\Pi_{1n}(\bm u)\|\right)&\leq \dfrac{1}{n}\sum_{t=p+1}^{n}E\left[\sup_{\|\bm u\|\leq \eta} \|\omega_t\dot{q}_t(\bm\theta_{\tau 0})\dot{q}_t^{\prime}(\bm\theta_{\tau 0})\sup_{x}|\dot{f}_{t-1}(x)|\bm u^{\prime}\dot{q}_t(\bm\theta_{\tau 0})\|\right] \\
&\leq C\eta\sup_{x}|\dot{f}_{t-1}(x)| E[\omega_t\|\dot{q}_t(\bm\theta_{\tau 0})\|^3]
\end{align*}
tends to $0$ as $\eta \to 0$.
Therefore, for any $\epsilon$, $\delta>0$, there exists $\eta_0=\eta_0(\epsilon)>0$ such that
\begin{align}\label{epsdelta1}
\text{Pr}\left(\sup_{\|\bm u\|\leq \eta_0}\|\Pi_{1n}(\bm u)\|> \delta\right)<\dfrac{\epsilon}{2}
\end{align}
for all $n\geq 1$. Since $\bm u=o_p(1)$, it follows that
\begin{align}\label{epsdelta2}
\text{Pr}\left(\|\bm u\|> \eta_0\right)<\dfrac{\epsilon}{2}
\end{align}
as $n$ is large enough. From \eqref{epsdelta1} and \eqref{epsdelta2}, we have
\begin{align*}
\text{Pr}\left(\|\Pi_{1n}(\bm u)\|> \delta\right)&\leq \text{Pr}\left(\|\Pi_{1n}(\bm u)\|> \delta, \|\bm u\|\leq \eta_0\right)+\text{Pr}\left(\|\bm u\|> \eta_0\right) \\
&\leq \text{Pr}\left(\sup_{\|\bm u\|\leq \eta_0}\|\Pi_{1n}(\bm u)\|> \delta\right)+\dfrac{\epsilon}{2}<\epsilon
\end{align*}
as $n$ is large enough. Therefore, $\Pi_{1n}(\bm u)=o_p(1)$. This together with \eqref{R2nrep}, implies that
\begin{align}\label{R2n}
R_{2n}(\bm u)=\sqrt{n}\bm u^{\prime}J_n\sqrt{n}\bm u+o_p(n\|\bm u\|^2).
\end{align}
For $R_{3n}(\bm u)$, by Lemma \ref{lem1}, it holds that
\begin{align}\label{R3n}
R_{3n}(\bm u)=o_p(\sqrt{n}\|\bm u\|+n\|\bm u\|^2).
\end{align}	
Note that
\begin{align}\label{xi2}
E[\xi_{2t}(\bm u)|\mathcal{F}_{t-1}]=\int_{0}^{1}\left[F_{t-1}(F_{t-1}^{-1}(\tau)+\nu_t(\bm u)s)-F_{t-1}(F_{t-1}^{-1}(\tau)+q_{1t}(\bm u)s)\right]ds.
\end{align}
Then by iterated-expectation, Taylor expansion and the Cauchy-Schiwarz inequality, together with $E[w_{t}\|\bm y_{1,t-1}\|^3]<\infty$ by Assumption \ref{assum3} and $\sup_{x}|\dot{f}_{t-1}(x)|<\infty$ by Assumption \ref{assum4}, for any $\eta>0$, it holds that
\begin{align*}
E\bigg(\sup_{\|\bm u\|\leq \eta}\dfrac{|R_{4n}(\bm u)|}{n\|\bm u\|^2}\bigg)
\leq & \dfrac{\eta}{n}\sum_{t=p+1}^{n}E\left\{\omega_t\left\|\dot{q}_t(\bm\theta_{\tau 0})\right\|\dfrac{1}{2}\sup_{x}f_{t-1}(x)\sup_{\Theta_{\tau}}\left\|\ddot{q}_t(\bm\theta_{\tau})\right\| \right\} \\
\leq &  C\eta E\left\{\left\|\sqrt{\omega_t}\dot{q}_t(\bm\theta_{\tau 0})\right\|\sup_{\Theta_{\tau}}\left\|\sqrt{\omega_t}\ddot{q}_t(\bm\theta_{\tau})\right\| \right\} \\
\leq & C\eta \left[E\left(\omega_t\left\|\dot{q}_t(\bm\theta_{\tau 0})\right\|^2\right)\right]^{1/2}\left[E\left(\sup_{\Theta_{\tau}}\omega_t\left\|\ddot{q}_t(\bm\theta_{\tau})\right\|^2\right)\right]^{1/2}
\end{align*}
tends to $0$ as $\eta \to 0$. Similar to \eqref{epsdelta1} and \eqref{epsdelta2}, we can show that
\begin{align}\label{R4n}
R_{4n}(\bm u)=o_p(n\|\bm u\|^2).
\end{align}
For $R_{5n}(\bm u)$, it follows that
\begin{align}\label{R5nrep}
R_{5n}(\bm u)=\sqrt{n}\bm u^{\prime}\Pi_{2n}(\bm u)\sqrt{n}\bm u+\sqrt{n}\bm u^{\prime}\Pi_{3n}(\bm u)\sqrt{n}\bm u.
\end{align}
where
\[\Pi_{2n}(\bm u)=\dfrac{1}{2n}\sum_{t=p+1}^{n}\omega_t\ddot{q}_t(\bm\theta_{\tau}^*)\xi_{1t}(\bm u)  \hspace{2mm}\text{and}\hspace{2mm} \Pi_{3n}(\bm u)=\dfrac{1}{2n}\sum_{t=p+1}^{n}\omega_t\ddot{q}_t(\bm\theta_{\tau}^*)\xi_{2t}(\bm u).\]
Similar to the proof of $R_{4n}(\bm u)$, by \eqref{xi1} we can show that, for any $\eta>0$,
\begin{align*}
E\left(\sup_{\|\bm u\|\leq \eta}\|\Pi_{2n}(\bm u)\|\right)
\leq & \dfrac{\eta}{n}\sum_{t=p+1}^{n} E\left\{\omega_t\sup_{\Theta_{\tau}}\left\|\ddot{q}_t(\bm\theta_{\tau})\right\|\dfrac{1}{2}\sup_{x}f_{t-1}(x)\left\|\dot{q}_t(\bm\theta_{\tau 0})\right\| \right\} \\
\leq & C\eta \left[E\left(\sup_{\Theta_{\tau}}\omega_t\left\|\ddot{q}_t(\bm\theta_{\tau})\right\|^2\right)\right]^{1/2}\left[E\left(\omega_t\left\|\dot{q}_t(\bm\theta_{\tau 0})\right\|^2\right)\right]^{1/2}
\end{align*}
tends to $0$ as $\eta \to 0$. And for $\Pi_{3n}(\bm u)$, by \eqref{xi2} we have
\begin{align*}
E\bigg(\sup_{\|\bm u\|\leq \eta}\|\Pi_{3n}(\bm u)\|\bigg)
\leq & \eta^2 E\left\{\dfrac{1}{n}\sum_{t=p+1}^{n}\omega_t\sup_{\Theta_{\tau}}\left\|\ddot{q}_t(\bm\theta_{\tau})\right\|\dfrac{1}{2}\sup_{x\in\mathbb{R}}f_{t-1}(x)\sup_{\Theta_{\tau}}\left\|\ddot{q}_t(\bm\theta_{\tau})\right\| \right\} \\
\leq &  C\eta^2 E\left(\sup_{\Theta_{\tau}}\omega_t\left\|\ddot{q}_t(\bm\theta_{\tau})\right\|^2 \right)
\end{align*}
tends to $0$ as $\eta \to 0$. 	
Similar to \eqref{epsdelta1} and \eqref{epsdelta2}, we can show that $\Pi_{2n}(\bm u)=o_p(1)$ and $\Pi_{3n}(\bm u)=o_p(1)$.
Therefore, together with \eqref{R5nrep}, it follows that
\begin{align}\label{R5n}
R_{5n}(\bm u)=o_p(n\|\bm u\|^2).
\end{align}
From \eqref{K2rep}, \eqref{R2n}-\eqref{R4n} and \eqref{R5n}, we have
\begin{align}\label{Kn2}
K_{2n}(\bm u)=\sqrt{n}\bm u^{\prime}J_n\sqrt{n}\bm u+o_p(\sqrt{n}\|\bm u\|+n\|\bm u\|^2).
\end{align}
In view of \eqref{Gnrep}, \eqref{Kn1} and \eqref{Kn2}, we accomplish the proof of this lemma.
\end{proof}

\bibliography{QuantileDAR}

\renewcommand{\baselinestretch}{1.2}
\clearpage
\begin{table}[h!]
\begin{center}
\caption{\label{table1a}Biases, empirical standard deviations (ESDs) and asymptotic standard deviations (ASDs) of $\widehat{\bm\theta}_{\tau n}$ at quantile level $\tau=0.05$ or $0.25$ for model \eqref{sim1} with coefficient functions \eqref{sim1coef1}, where $\text{ASD}_{1}$ and $\text{ASD}_{2}$ correspond to the bandwidths $h_{B}$ and $h_{HS}$, respectively. The $F(\cdot)$ is the distribution function of the normal, the Student's $t_5$ or the Student's $t_3$ distribution.}\vspace{2mm}	
\begin{tabular}{clrrrrrrrrrrr}
\hline\hline
&&\multicolumn{5}{c}{$\tau=0.05$}&&\multicolumn{5}{c}{$\tau=0.25$}\\
\cline{3-7}\cline{9-13}	
$n$&&\multicolumn{1}{c}{True}&\multicolumn{1}{c}{Bias}&\multicolumn{1}{c}{ESD}&\multicolumn{1}{c}{$\text{ASD}_{1}$}&\multicolumn{1}{c}{$\text{ASD}_{2}$}&&\multicolumn{1}{c}{True}&\multicolumn{1}{c}{Bias}&\multicolumn{1}{c}{ESD}&\multicolumn{1}{c}{$\text{ASD}_{1}$}&\multicolumn{1}{c}{$\text{ASD}_{2}$}\\
\hline
&&\multicolumn{11}{c}{Normal distribution}          \\
$500$	&	$b$	&	-2.706	&	-0.015	&	0.489	&	0.743	&	0.529	&&	-0.455	&	-0.004	&	 0.134	&	0.137	&	0.133	\\
	&	$\phi$	&	-0.200	&	-0.003	&	0.143	&	0.215	&	0.155	&&	-0.200	&	-0.003	&	 0.085	&	0.095	&	0.093	\\
	&	$\beta$	&	-1.082	&	-0.052	&	0.525	&	1.066	&	0.578	&&	-0.182	&	-0.011	&	 0.134	&	0.140	&	0.136	\\
$1000$	&	$b$	&	-2.706	&	-0.003	&	0.350	&	0.379	&	0.409	&&	-0.455	&	-0.007	&	 0.094	&	0.097	&	0.095	\\
	&	$\phi$	&	-0.200	&	-0.002	&	0.098	&	0.109	&	0.116	&&	-0.200	&	0.000	&	 0.064	&	0.066	&	0.065	\\
	&	$\beta$	&	-1.082	&	-0.029	&	0.370	&	0.398	&	0.495	&&	-0.182	&	-0.004	&	 0.096	&	0.099	&	0.096	\\
&&\multicolumn{11}{c}{Student's $t_5$ distribution}          \\
$500$	&	$b$	&	-4.060	&	-0.067	&	1.030	&	1.189	&	1.184	&&	-0.528	&	-0.008	&	 0.169	&	0.179	&	0.172	\\
	&	$\phi$	&	-0.200	&	-0.004	&	0.235	&	0.279	&	0.274	&&	-0.200	&	-0.001	&	 0.097	&	0.111	&	0.107	\\
	&	$\beta$	&	-1.624	&	-0.163	&	0.996	&	1.146	&	1.211	&&	-0.211	&	-0.017	&	 0.151	&	0.162	&	0.157	\\
$1000$	&	$b$	&	-4.060	&	-0.020	&	0.726	&	0.857	&	0.847	&&	-0.528	&	-0.009	&	 0.119	&	0.126	&	0.121	\\
	&	$\phi$	&	-0.200	&	-0.005	&	0.162	&	0.193	&	0.194	&&	-0.200	&	0.000	&	 0.073	&	0.077	&	0.074	\\
	&	$\beta$	&	-1.624	&	-0.099	&	0.683	&	0.855	&	0.879	&&	-0.211	&	-0.008	&	 0.108	&	0.113	&	0.109	\\
\hline
\end{tabular}
\end{center}
\end{table}

\begin{table}[h!]
\begin{center}
\caption{\label{table1b}Biases, empirical standard deviations (ESDs) and asymptotic standard deviations (ASDs) of $\widehat{\bm\theta}_{\tau n}$ at quantile level $\tau=0.05$ or $0.25$ for model \eqref{sim1} with coefficient functions \eqref{sim1coef2}, where $\text{ASD}_{1}$ and $\text{ASD}_{2}$ correspond to the bandwidths $h_{B}$ and $h_{HS}$, respectively.  The $F(\cdot)$ is the distribution function of the normal, the Student's $t_5$ or the Student's $t_3$ distribution. }\vspace{2mm}	
\begin{tabular}{clrrrrrrrrrrr}
\hline\hline
&&\multicolumn{5}{c}{$\tau=0.05$}&&\multicolumn{5}{c}{$\tau=0.25$}\\
\cline{3-7}\cline{9-13}	
$n$&&\multicolumn{1}{c}{True}&\multicolumn{1}{c}{Bias}&\multicolumn{1}{c}{ESD}&\multicolumn{1}{c}{$\text{ASD}_{1}$}&\multicolumn{1}{c}{$\text{ASD}_{2}$}&&\multicolumn{1}{c}{True}&\multicolumn{1}{c}{Bias}&\multicolumn{1}{c}{ESD}&\multicolumn{1}{c}{$\text{ASD}_{1}$}&\multicolumn{1}{c}{$\text{ASD}_{2}$}\\
\hline
&&\multicolumn{11}{c}{Normal distribution}          \\
$500$	&	$b$	&	-2.706	&	0.011	&	0.425	&	0.558	&	0.676	&&	-0.455	&	-0.001	&	 0.122	&	0.124	&	0.123	\\
	&	$\phi$	&	0.025	&	-0.006	&	0.117	&	0.146	&	0.166	&&	0.125	&	-0.004	&	 0.074	&	0.081	&	0.080	\\
	&	$\beta$	&	-0.068	&	-0.090	&	0.294	&	0.520	&	0.751	&&	-0.057	&	-0.013	&	 0.085	&	0.095	&	0.098	\\
$1000$	&	$b$	&	-2.706	&	0.014	&	0.291	&	0.349	&	0.326	&&	-0.455	&	-0.004	&	 0.082	&	0.089	&	0.085	\\
	&	$\phi$	&	0.025	&	-0.005	&	0.081	&	0.093	&	0.089	&&	0.125	&	-0.003	&	 0.054	&	0.057	&	0.055	\\
	&	$\beta$	&	-0.068	&	-0.046	&	0.185	&	0.275	&	0.245	&&	-0.057	&	-0.006	&	 0.060	&	0.068	&	0.064	\\
&&\multicolumn{11}{c}{Student's $t_5$ distribution}          \\
$500$	&	$b$	&	-4.060	&	0.010	&	0.869	&	1.316	&	1.028	&&	-0.528	&	-0.003	&	 0.153	&	0.157	&	0.155	\\
	&	$\phi$	&	0.025	&	-0.010	&	0.194	&	0.268	&	0.230	&&	0.125	&	-0.004	&	 0.082	&	0.091	&	0.090	\\
	&	$\beta$	&	-0.102	&	-0.238	&	0.538	&	1.222	&	0.748	&&	-0.066	&	-0.018	&	 0.091	&	0.099	&	0.105	\\
$1000$	&	$b$	&	-4.060	&	0.025	&	0.588	&	0.759	&	0.729	&&	-0.528	&	-0.005	&	 0.102	&	0.112	&	0.116	\\
	&	$\phi$	&	0.025	&	-0.009	&	0.131	&	0.164	&	0.158	&&	0.125	&	-0.002	&	 0.060	&	0.063	&	0.068	\\
	&	$\beta$	&	-0.102	&	-0.128	&	0.311	&	0.543	&	0.530	&&	-0.066	&	-0.008	&	 0.062	&	0.072	&	0.085	\\
\hline
\end{tabular}
\end{center}
\end{table}

\begin{table}[h!]
	\caption{\label{table-bic} Percentages of underfitted, correctly selected and  overfitted models by BIC based on 1000 replications for three sets of coefficient functions.  The $F(\cdot)$ is the distribution function of the normal, the Student's $t_5$ or the Student's $t_3$ distribution.}
	\begin{center}
		\begin{tabular}{crrrrrrrrrrrr}
			\hline\hline
			&&\multicolumn{3}{c}{Set (i)}&&\multicolumn{3}{c}{Set (ii)}&&\multicolumn{3}{c}{Set (iii)}\\
			\cline{3-5}\cline{7-9}\cline{11-13}	
			& $n$	&	Under &	Exact	& Over && Under &	Exact	& Over &&	Under &	Exact	& Over\\\hline
			Normal	&	500	&	3.7	&	95.3	&	1.0	&&	2.9	&	95.3	&	1.8	&&	4.3	&	93.7	&	2.0	\\
			&	1000	&	0.0	&	99.3	&	0.7	&&	0.0	&	99.7	&	0.3	&&	0.0	&	98.9	&	1.1	\\
			$t_5$	&	500	&	6.9	&	90.0	&	3.1	&&	5.3	&	92.1	&	2.6	&&	11.7	&	86.0	&	2.3	\\
			&	1000	&	0.3	&	98.1	&	1.6	&&	0.1	&	99.1	&	0.8	&&	0.1	&	98.5	&	1.4	\\
			$t_3$	&	500	&	11.6	&	81.7	&	6.7	&&	9.5	&	84.6	&	5.9	&&	17.8	&	80.4	&	1.8	\\
			&	1000	&	0.5	&	96.1	&	3.4	&&	1.2	&	96.2	&	2.6	&&	1.1	&	97.6	&	1.3	\\			
			\hline
		\end{tabular}	
	\end{center}
\end{table}

\begin{table}
\begin{center}
\caption{\label{table2a}Biases ($\times 100$), empirical standard deviations (ESDs) ($\times 100$) and asymptotic standard deviations (ASDs) ($\times 100$) of $\widehat{\rho}_{k,\tau}$ with $k=2,4$ or 6 for model \eqref{sim1coef1}. The quantile level is $\tau=0.05$ or $0.25$, and $\text{ASD}_{1}$ and $\text{ASD}_{2}$ correspond to the bandwidths $h_{B}$ and $h_{HS}$, respectively.  The $F(\cdot)$ is the distribution function of the normal, the Student's $t_5$ or the Student's $t_3$ distribution. }\vspace{2mm}	
\begin{tabular}{ccrrrrrrrrr}
\hline\hline
&&\multicolumn{4}{c}{$\tau=0.05$}&&\multicolumn{4}{c}{$\tau=0.25$}\\
\cline{3-6}\cline{8-11}	
$n$&\multicolumn{1}{c}{Lag}&\multicolumn{1}{c}{Bias}&\multicolumn{1}{c}{ESD}&\multicolumn{1}{c}{$\text{ASD}_{1}$}&\multicolumn{1}{c}{$\text{ASD}_{2}$}&&\multicolumn{1}{c}{Bias}&\multicolumn{1}{c}{ESD}&\multicolumn{1}{c}{$\text{ASD}_{1}$}&\multicolumn{1}{c}{$\text{ASD}_{2}$}\\
\hline
&&\multicolumn{9}{c}{Normal distribution}          \\
$500$	&	2	&	0.01	&	2.64	&	3.04	&	2.70	&&	-0.20	&	2.71	&	2.65	&	2.65	 \\
	&	4	&	0.04	&	3.03	&	5.60	&	3.01	&&	-0.11	&	3.06	&	3.00	&	3.00	 \\
	&	6	&	-0.01	&	3.07	&	3.33	&	3.06	&&	-0.17	&	3.10	&	3.04	&	3.04	 \\
$1000$	&	2	&	-0.06	&	1.94	&	1.89	&	2.07	&&	-0.13	&	1.94	&	1.87	&	1.87	 \\
	&	4	&	0.05	&	2.10	&	2.12	&	2.14	&&	-0.04	&	2.15	&	2.12	&	2.12	 \\
	&	6	&	-0.08	&	2.13	&	2.15	&	2.22	&&	-0.10	&	2.25	&	2.15	&	2.15	 \\
&&\multicolumn{9}{c}{Student's $t_5$ distribution}          \\
$500$	&	2	&	-0.01	&	2.15	&	2.13	&	2.17	&&	-0.12	&	2.11	&	2.04	&	2.05	 \\
	&	4	&	0.08	&	2.63	&	2.67	&	2.68	&&	-0.13	&	2.75	&	2.64	&	2.65	 \\
	&	6	&	0.00	&	2.84	&	2.81	&	2.83	&&	-0.14	&	2.89	&	2.80	&	2.81	 \\
$1000$	&	2	&	-0.01	&	1.50	&	1.53	&	1.50	&&	-0.06	&	1.44	&	1.42	&	1.42	 \\
	&	4	&	0.05	&	1.84	&	1.97	&	1.91	&&	-0.04	&	1.90	&	1.86	&	1.86	 \\
	&	6	&	-0.04	&	1.94	&	2.07	&	2.00	&&	-0.12	&	2.03	&	1.97	&	1.97	 \\
\hline
\end{tabular}
\end{center}
\end{table}

\begin{table}
\begin{center}
\caption{\label{table2c}Biases ($\times 100$), empirical standard deviations (ESDs) ($\times 100$) and asymptotic standard deviations (ASDs) ($\times 100$) of $\widehat{r}_{k,\tau}$ with $k=2,4$ or 6 for model \eqref{sim1coef1}. The quantile level is $\tau=0.05$ or $0.25$, and $\text{ASD}_{1}$ and $\text{ASD}_{2}$ correspond to the bandwidths $h_{B}$ and $h_{HS}$, respectively. The $F(\cdot)$ is the distribution function of the normal, the Student's $t_5$ or the Student's $t_3$ distribution. }\vspace{2mm}	
\begin{tabular}{ccrrrrrrrrr}
\hline\hline
&&\multicolumn{4}{c}{$\tau=0.05$}&&\multicolumn{4}{c}{$\tau=0.25$}\\
\cline{3-6}\cline{8-11}	
$n$&\multicolumn{1}{c}{Lag}&\multicolumn{1}{c}{Bias}&\multicolumn{1}{c}{ESD}&\multicolumn{1}{c}{$\text{ASD}_{1}$}&\multicolumn{1}{c}{$\text{ASD}_{2}$}&&\multicolumn{1}{c}{Bias}&\multicolumn{1}{c}{ESD}&\multicolumn{1}{c}{$\text{ASD}_{1}$}&\multicolumn{1}{c}{$\text{ASD}_{2}$}\\
\hline
&&\multicolumn{9}{c}{Normal distribution}          \\
$500$	&	2	&	0.02	&	2.67	&	3.09	&	2.73	&&	-0.10	&	2.72	&	2.69	&	2.69	 \\
	&	4	&	0.07	&	3.03	&	5.69	&	3.01	&&	-0.06	&	3.06	&	3.00	&	3.00	 \\
	&	6	&	0.03	&	3.09	&	3.33	&	3.06	&&	-0.04	&	3.03	&	3.03	&	3.03	 \\
$1000$	&	2	&	-0.06	&	1.96	&	1.91	&	2.09	&&	-0.07	&	1.99	&	1.90	&	1.90	 \\
	&	4	&	0.05	&	2.09	&	2.13	&	2.14	&&	-0.02	&	2.17	&	2.12	&	2.12	 \\
	&	6	&	-0.06	&	2.12	&	2.15	&	2.23	&&	-0.04	&	2.20	&	2.15	&	2.15	 \\
&&\multicolumn{9}{c}{Student's $t_5$ distribution}          \\
$500$	&	2	&	-0.01	&	2.22	&	2.17	&	2.21	&&	-0.06	&	2.06	&	2.03	&	2.04	 \\
	&	4	&	0.10	&	2.63	&	2.68	&	2.69	&&	-0.04	&	2.71	&	2.62	&	2.62	 \\
	&	6	&	0.04	&	2.85	&	2.81	&	2.83	&&	0.00	&	2.78	&	2.79	&	2.79	 \\
$1000$	&	2	&	-0.01	&	1.52	&	1.56	&	1.53	&&	-0.03	&	1.46	&	1.40	&	1.40	 \\
	&	4	&	0.04	&	1.84	&	1.97	&	1.91	&&	0.00	&	1.93	&	1.84	&	1.84	 \\
	&	6	&	-0.02	&	1.93	&	2.00	&	2.01	&&	-0.07	&	1.97	&	1.96	&	1.96	 \\
\hline
\end{tabular}
\end{center}
\end{table}

\begin{table}
\begin{center}
\caption{\label{table3a}Rejection rate (\%) of test statistics $Q_1(6)$, $Q_2(6)$ and $Q(6)$. The significance level is 5\%, and the quantile level is $\tau=0.05$ or $0.25$. The $F(\cdot)$ is the distribution function of the normal, the Student's $t_5$ or the Student's $t_3$ distribution.}\vspace{1mm}	
\begin{tabular}{clrrrrrrrr}
\hline\hline
&&&\multicolumn{3}{c}{$\tau=0.05$}&&\multicolumn{3}{c}{$\tau=0.25$}\\
\cline{4-6}\cline{8-10}	
$n$&$c_1$&$c_2$&\multicolumn{1}{c}{$Q_{1}$}&\multicolumn{1}{c}{$Q_{2}$}&\multicolumn{1}{c}{$Q$}&&\multicolumn{1}{c}{$Q_{1}$}&\multicolumn{1}{c}{$Q_{2}$}&\multicolumn{1}{c}{$Q$}\\
\hline
&&&\multicolumn{7}{c}{Normal distribution}          \\
500	&	0.0	&	0.0	&	3.5	&	3.7	&	3.6	&&	5.5	&	4.4	&	5.8	\\
&	0.1	&	0.0	&	8.3	&	7.3	&	7.4	&&	15.7	&	9.0	&	13.3	\\
&	0.3	&	0.0	&	47.3	&	41.8	&	45.6	&&	92.4	&	53.4	&	89.5	\\
&	0.0	&	0.1	&	6.3	&	6.2	&	6.6	&&	6.5	&	6.4	&	6.5	\\
&	0.0	&	0.3	&	13.7	&	14.1	&	13.7	&&	7.4	&	11.2	&	9.5	\\
1000	&	0.0	&	0.0	&	5.2	&	5.2	&	5.3	&&	4.7	&	6.2	&	5.7	\\
&	0.1	&	0.0	&	12.9	&	11.4	&	12.2	&&	28.1	&	16.0	&	25.1	\\
&	0.3	&	0.0	&	83.6	&	78.3	&	81.4	&&	99.9	&	90.2	&	99.8	\\
&	0.0	&	0.1	&	7.4	&	7.2	&	7.4	&&	5.7	&	6.5	&	5.7	\\
&	0.0	&	0.3	&	16.5	&	19.2	&	18.1	&&	6.2	&	16.3	&	11.8	\\
&&&\multicolumn{7}{c}{Student's $t_5$ distribution}          \\
500	&	0.0	&	0.0	&	5.6	&	5.5	&	5.6	&&	5.7	&	5.2	&	5.3	\\
&	0.1	&	0.0	&	7.8	&	6.1	&	6.8	&&	15.7	&	6.6	&	11.5	\\
&	0.3	&	0.0	&	32.4	&	19.1	&	26.9	&&	93.4	&	27.8	&	86.3	\\
&	0.0	&	0.1	&	11.8	&	12.3	&	12.2	&&	6.8	&	8.2	&	7.7	\\
&	0.0	&	0.3	&	31.1	&	32.0	&	32.9	&&	7.6	&	17.9	&	16.0	\\
1000	&	0.0	&	0.0	&	5.6	&	5.6	&	5.6	&&	4.6	&	6.5	&	5.2	\\
&	0.1	&	0.0	&	10.3	&	7.0	&	7.8	&&	30.0	&	11.0	&	23.4	\\
&	0.3	&	0.0	&	60.7	&	37.9	&	53.5	&&	99.8	&	56.8	&	99.7	\\
&	0.0	&	0.1	&	13.9	&	15.2	&	14.1	&&	6.4	&	10.5	&	9.1	\\
&	0.0	&	0.3	&	42.7	&	52.0	&	49.9	&&	8.6	&	30.0	&	23.2	\\
&&&\multicolumn{7}{c}{Student's $t_3$ distribution}          \\	
500	&	0.0	&	0.0	&	6.9	&	6.3	&	6.6	&&	5.3	&	4.6	&	5.2	\\
&	0.1	&	0.0	&	8.9	&	7.4	&	8.3	&&	14.8	&	6.1	&	10.7	\\
&	0.3	&	0.0	&	28.1	&	14.1	&	22.1	&&	91.9	&	14.4	&	82.8	\\
&	0.0	&	0.1	&	16.8	&	17.2	&	18.1	&&	7.5	&	9.0	&	8.5	\\
&	0.0	&	0.3	&	41.8	&	43.0	&	43.3	&&	7.2	&	20.6	&	17.5	\\
1000	&	0.0	&	0.0	&	5.8	&	6.3	&	6.5	&&	4.5	&	5.4	&	5.2	\\
&	0.1	&	0.0	&	10.0	&	7.2	&	9.0	&&	29.3	&	6.3	&	21.0	\\
&	0.3	&	0.0	&	50.5	&	22.3	&	39.8	&&	99.7	&	25.0	&	99.3	\\
&	0.0	&	0.1	&	24.5	&	27.4	&	27.0	&&	7.7	&	13.2	&	12.2	\\
&	0.0	&	0.3	&	60.3	&	63.8	&	63.6	&&	8.3	&	34.7	&	28.4	\\
\hline
\end{tabular}
\end{center}
\end{table}


\newpage

\begin{figure}[tbp]
\centering
\includegraphics[scale=0.45]{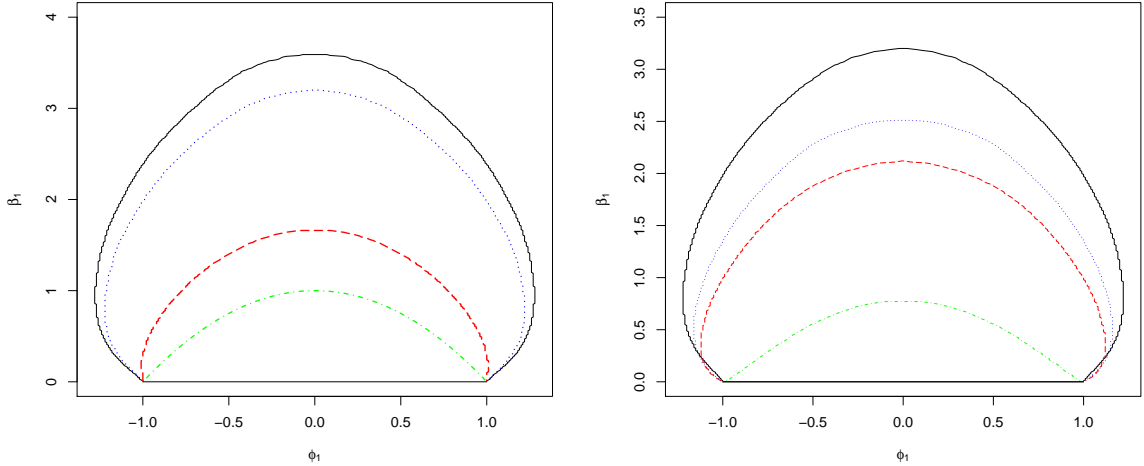}
\caption{\label{StationaryRegion} Left panel: stationarity regions of $E\ln|\phi_1+\varepsilon_t\sqrt{\beta_1}|<0$ (solid line), $E|\phi_1+\varepsilon_t\sqrt{\beta_1}|^\kappa<1$ with $\kappa=0.1$ (dotted line), $\kappa=0.9$ (longdash line), or $\kappa=2$ (dotdash line), respectively, where $\varepsilon_t$ follows a standard normal distribution; Right panel: stationarity regions of $E|\phi_1+\varepsilon_t\sqrt{\beta_1}|^\kappa<1$ with $\kappa=0.1$ and $\varepsilon_t$ being standard normal (solid line), Student's $t_5$ (dotted line), $t_3$ (longdash line) or standard Cauchy (dotdash line) random variable, respectively.}
\end{figure}

\begin{figure}
	\begin{center}
		\includegraphics[scale=0.55]{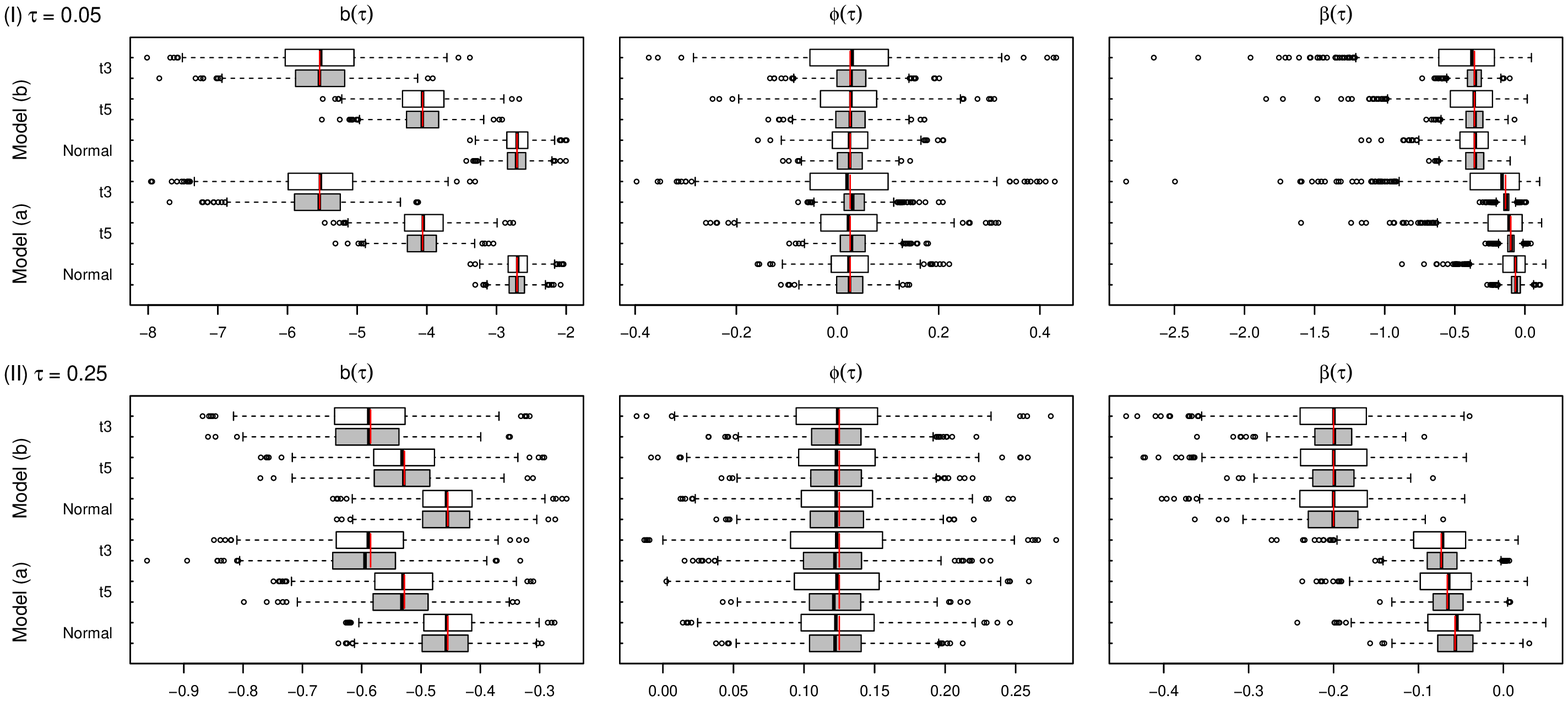}
		\caption{\label{BoxPlotEstimator} Box plots for the self-weighted estimator $\widehat{\bm\theta}_{\tau n}$ (white boxes) and the unweighted estimator $\widetilde{\bm\theta}_{\tau n}$ (grey boxes), at $\tau=0.05$ or 0.25, for two models with $F_b(\cdot)$ being the normal, Student's $t_5$ or Student's $t_3$ distribution function. Model (a): $b(\tau)=S_Q^{-1}(F_b^{-1}(\tau)), \phi(\tau)=0.5\tau, \beta(\tau)=0.5\tau b(\tau)$; Model (b): $b(\tau)=S_Q^{-1}(F_b^{-1}(\tau)), \phi(\tau)=0.5\tau, \beta(\tau)=0.8(\tau-0.5)$. The thick black line in the center of the box indicates the sample median, and the thin red line indicates the value of the corresponding element of the true parameter vector $\bm\theta_{\tau 0}$. The notations $b(\tau)$, $\phi(\tau)$ and $\beta(\tau)$ represent the corresponding elements of $\widehat{\bm\theta}_{\tau n}$ and $\widetilde{\bm\theta}_{\tau n}$.}
	\end{center}
\end{figure}

\begin{figure}
	\begin{center}
		\includegraphics[scale=0.55]{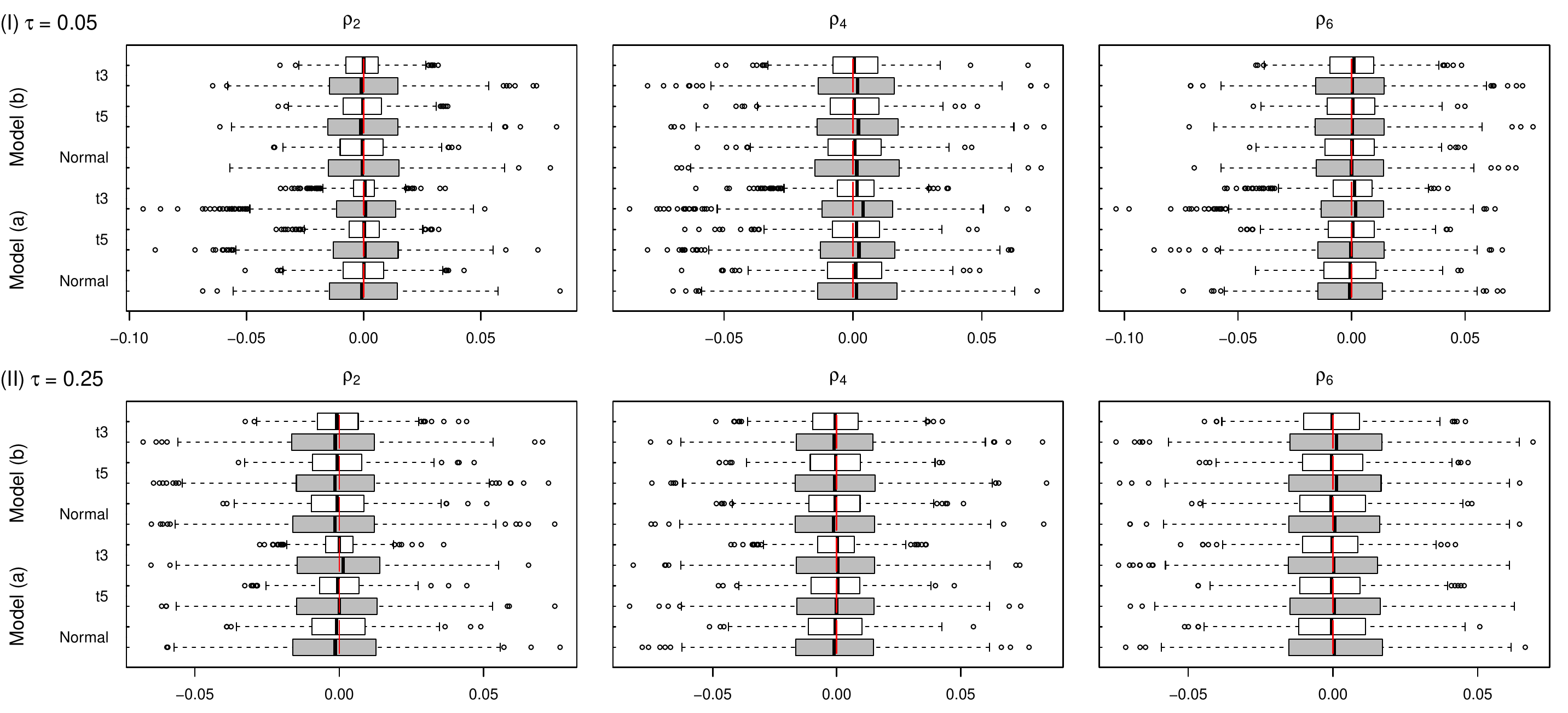}
		\caption{\label{BoxPlotRho} Box plots for the self-weighted residual QACFs $\widehat{\rho}_{k,\tau}$ (white boxes) and the unweighted residual QACFs $\widetilde{\rho}_{k,\tau}$ (grey boxes), at $\tau=0.05$ or 0.25, $k=2$, 4 or 6, for two models with $F_b(\cdot)$ being the normal, Student's $t_5$ or Student's $t_3$ distribution function. Model (a): $b(\tau)=S_Q^{-1}(F_b^{-1}(\tau)), \phi(\tau)=0.5\tau, \beta(\tau)=0.5\tau b(\tau)$; Model (b): $b(\tau)=S_Q^{-1}(F_b^{-1}(\tau)), \phi(\tau)=0.5\tau, \beta(\tau)=0.8(\tau-0.5)$. The thick black line in the center of the box indicates the sample median, and the thin red line indicates the true value zero of $\rho_{k,\tau}$ if $Q_{\tau}(y_t|\mathcal{F}_{t-1})$ is correctly specified. The notations $\rho_2$, $\rho_4$ and $\rho_6$ represent $\widehat{\rho}_{k,\tau}$ and $\widetilde{\rho}_{k,\tau}$ at $k=2$, 4 and 6.}
	\end{center}
\end{figure}

\begin{figure}
	\begin{center}
		\includegraphics[scale=0.55]{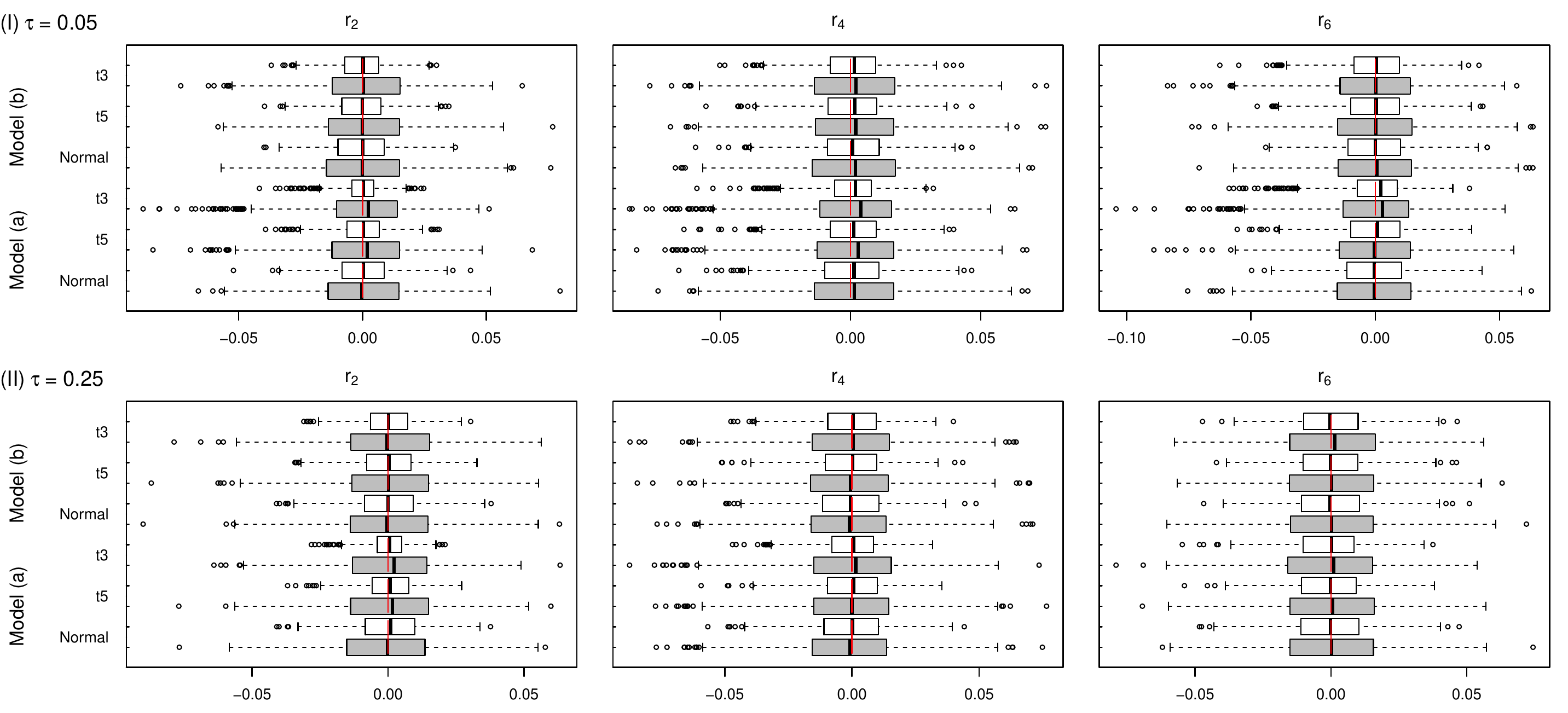}
		\caption{\label{BoxPlotR} Box plots for the self-weighted residual QACFs $\widehat{r}_{k,\tau}$ (white boxes) and the unweighted residual QACFs $\widetilde{r}_{k,\tau}$ (grey boxes), at $\tau=0.05$ or 0.25, $k=2$, 4 or 6, for two models with $F_b(\cdot)$ being the normal, Student's $t_5$ or Student's $t_3$ distribution function. Model (a): $b(\tau)=S_Q^{-1}(F_b^{-1}(\tau)), \phi(\tau)=0.5\tau, \beta(\tau)=0.5\tau b(\tau)$; Model (b): $b(\tau)=S_Q^{-1}(F_b^{-1}(\tau)), \phi(\tau)=0.5\tau, \beta(\tau)=0.8(\tau-0.5)$. The thick black line in the center of the box indicates the sample median, and the thin red line indicates the true value zero of $r_{k,\tau}$ if $Q_{\tau}(y_t|\mathcal{F}_{t-1})$ is correctly specified. The notations $r_2$, $r_4$ and $r_6$ represent $\widehat{r}_{k,\tau}$ and $\widetilde{r}_{k,\tau}$ at $k=2$, 4 and 6.}
	\end{center}
\end{figure}

\begin{figure}
\begin{center}
	\includegraphics[scale=0.55]{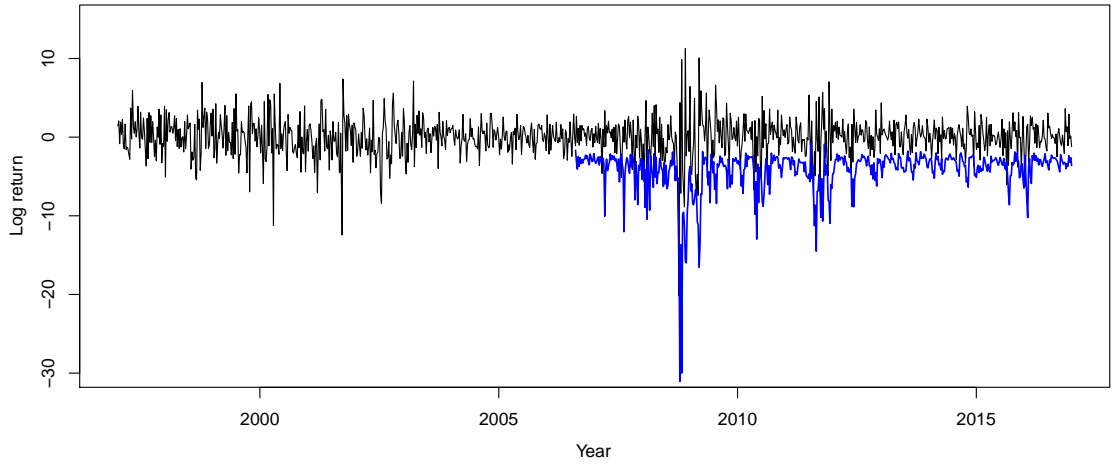}
	\caption{\label{TimePlotVaR} Time plot of weekly log returns (black line) of S\&P500 Index from January 10, 1997 to December 30, 2016, and negative $5\%$ VaR forecasts (blue line) from August 11, 2006 to December 30, 2016.}
\end{center}
\end{figure}

\begin{figure}[htb]
	\begin{center}
		\begin{tabular}{c}
			\includegraphics[scale=0.5]{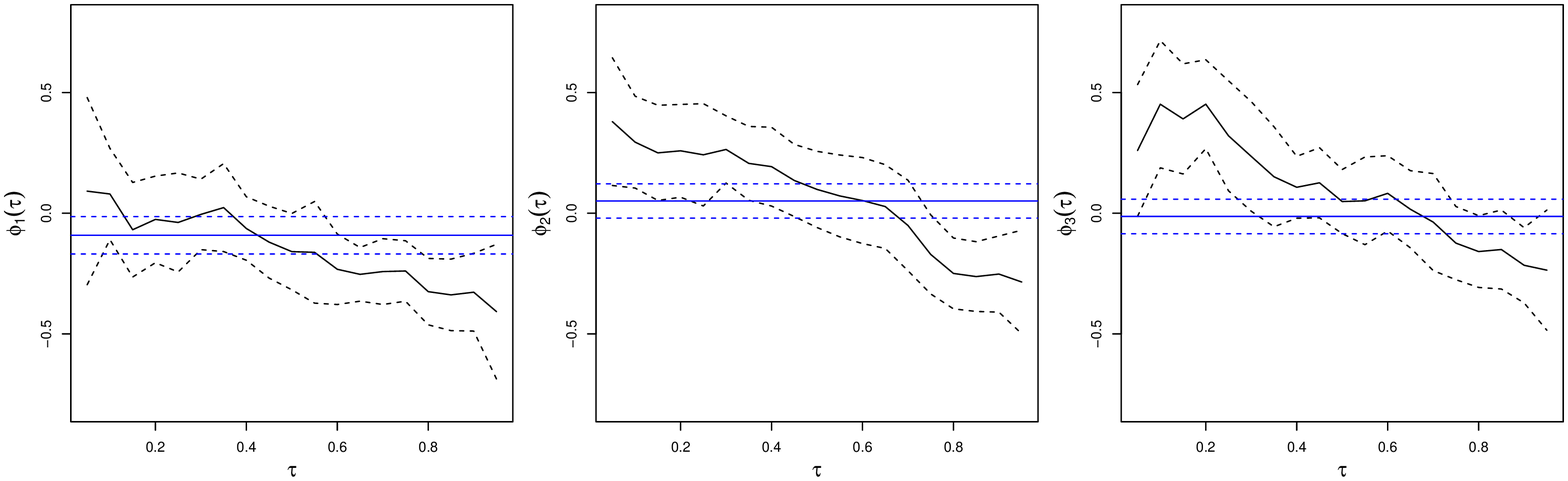}
		\end{tabular}
\caption{\label{IntroFunction} The estimates of $\phi_i(\tau_k)$ (black solid) from the fitted quantile double AR
model, together with their 95\% confidence band (black dotted), at $\tau_k=k/20$ with $1\leq k\leq 19$, and estimates of $\phi_i$ (blue solid) from the fitted double AR model, together with their 95\% confidence interval (blue dotted).}
	\end{center}
\end{figure}

\begin{figure}[htb]
	\begin{center}
		\begin{tabular}{c}
            \includegraphics[scale=0.5]{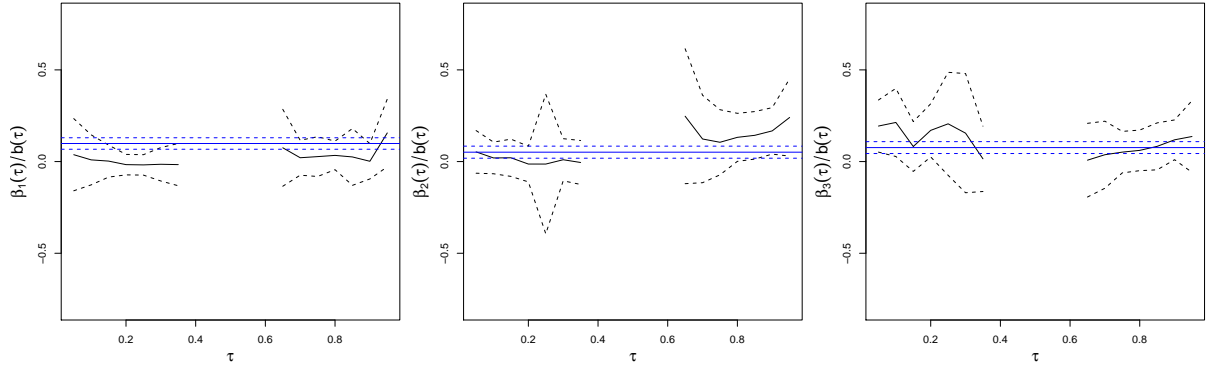}
		\end{tabular}
		\caption{\label{FunctionPlots}
The estimates of $\beta_j(\tau_k)/b(\tau_k)$ (black solid) from the fitted quantile double AR model, together with their 95\% confidence band (black dotted), at $\tau_k=k/20$ with $1\leq k \leq 7$ and $13\leq k\leq 19$, and estimates of $\beta_j/\omega$ (blue solid) from the fitted double AR model, together with their 95\% confidence interval (blue dotted).
}
	\end{center}
\end{figure}


\begin{figure}[htb]
 \begin{center}
  \includegraphics[scale=0.55]{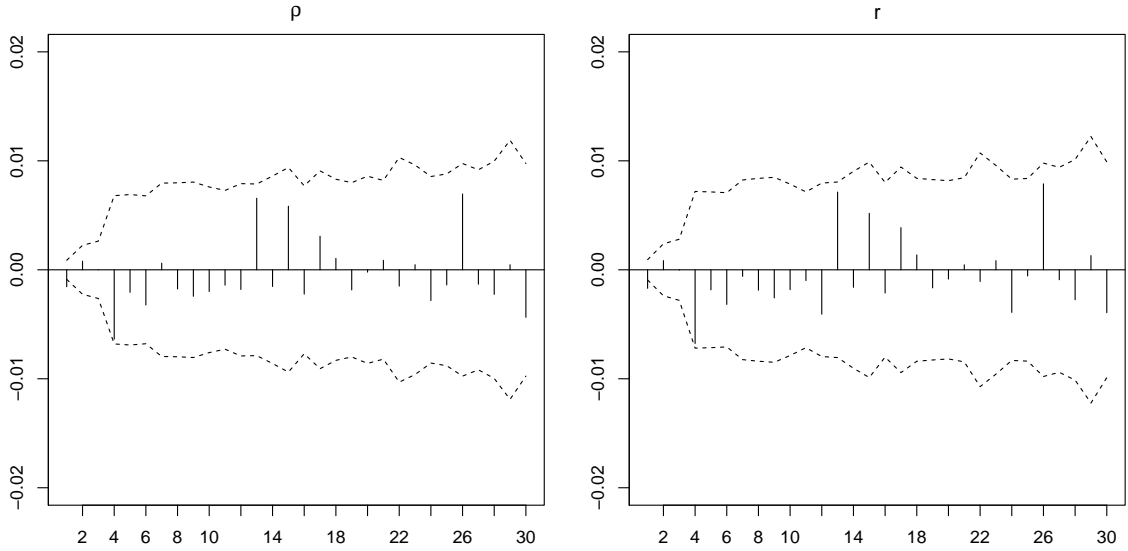}			
  \caption{\label{RhoRCI} Residual QACFs of the fitted quantile double AR model at $\tau=0.05$, with 95\% confidence bands.}
  \end{center}
 \end{figure}

\begin{table}
	\caption{\label{table_real_data_forecast} Empirical coverage rate (\%) and $p$-values of two VaR backtests of three models at the 5\%, 10\%, 90\% and 95\% conditional quantiles. M1, M2 and M3 represent the QDAR, QAR and TQAR models, respectively. }
	\begin{center}
		\begin{tabular}{crrrrrrrrrrrrrrr}
			\hline\hline 
			& \multicolumn{3}{c}{$\tau=5\%$} && \multicolumn{3}{c}{$\tau=10\%$} && \multicolumn{3}{c}{$\tau=90\%$} && \multicolumn{3}{c}{$\tau=95\%$}\\
			\cline{2-4}\cline{6-8}\cline{10-12}\cline{14-16}
			&\multicolumn{1}{c}{ECR}&\multicolumn{1}{c}{CC}&\multicolumn{1}{c}{DQ}&&\multicolumn{1}{c}{ECR}&\multicolumn{1}{c}{CC}&\multicolumn{1}{c}{DQ}&&\multicolumn{1}{c}{ECR}&\multicolumn{1}{c}{CC}&\multicolumn{1}{c}{DQ}&&\multicolumn{1}{c}{ECR}&\multicolumn{1}{c}{CC}&\multicolumn{1}{c}{DQ}\\
			\hline
			M1	&	5.34	&	0.88	&	0.33	&&	9.02	&	0.34	&	0.22	&&	91.53	&	0.25	&	0.11	&&	95.95	&	0.23	&	0.51	\\
			M2	&	5.16	&	0.17	&	0.00	&&	9.58	&	0.03	&	0.00	&&	92.45	&	0.08	&	0.00	&&	95.95	&	0.33	&	0.00	\\
			M3	&	6.45	&	0.20	&	0.01	&&	10.13	&	0.81	&	0.02	&&	91.34	&	0.19	&	0.12	&&	95.03	&	0.42	&	0.02	\\
			\hline
		\end{tabular}
	\end{center}
\end{table}

\end{document}